\documentclass[11pt]{article}

\usepackage[utf8]{inputenc}

\usepackage{calrsfs}
\usepackage{musicography}
\usepackage[colorlinks]{hyperref}

\usepackage{tikz}

\usepackage{caption}

\usepackage{url}

\usepackage{amssymb}
\usepackage{mathrsfs}
\usepackage{frcursive}
\usepackage{color}
\usepackage{stmaryrd}
\usepackage{cancel}
\usepackage{scalerel,stackengine}
\newcommand\reallywidehat[1]{%
\savestack{\tmpbox}{\stretchto{%
  \scaleto{%
    \scalerel*[\widthof{\ensuremath{#1}}]{\kern-.6pt\bigwedge\kern-.6pt}%
    {\rule[-\textheight/2]{1ex}{\textheight}}
  }{\textheight}%
}{0.5ex}}%
\stackon[1pt]{#1}{\tmpbox}%
}
\usepackage{csquotes}
\usepackage{etoolbox}

\AtBeginEnvironment{thebibliography}{\small}
\usepackage{titlesec}
\usepackage{lipsum}

\usepackage{mathrsfs}

\usepackage{enumitem}

\usepackage{amsthm,amsmath,amsfonts,amssymb,hyperref,cases,color}

\usepackage[capitalise]{cleveref}

\usepackage{vmargin}

\usepackage{amsthm}

\usepackage{hyperref}

\setmarginsrb{2.4cm}{1.2cm}{2.4cm}{2.4cm}{1cm}{1cm}{2cm}{2cm}

\usepackage{tikz-cd}

\usepackage{slashed}

\usepackage[T1]{fontenc}

\usetikzlibrary{decorations.markings}

\numberwithin{equation}{section}

\usepackage{ amssymb }

\usepackage{ dsfont }

\usepackage{calrsfs}
\DeclareMathAlphabet{\pazocal}{OMS}{zplm}{m}{n}

\newcommand{\C}{\mathbb{C}}
\newcommand{\R}{\mathbb{R}}
\newcommand{\N}{\mathbb{N}}

\newcommand{\Z}{\mathbb{Z}}

\newcommand{\Pa}{(P -i\epsilon)^{-\alpha}}

\newcommand{\resP}{(P - z)^{-1}}

\newcommand{\U}{\pazocal{U}}
\newcommand{\Dd}{\pazocal{D}}

\newcommand{\tr}{\mathrm{tr}}

\newcommand{\loc}{\mathrm{loc}}

\newcommand{\wf}{\mathrm{WF}}

\newcommand{\Id}{\mathrm{Id}}

\newcommand{\overbar}[1]{\mkern 0.5mu\overline{\mkern-4.0mu#1\mkern-1.0mu}\mkern 1.0mu}

\makeatletter
\newsavebox\myboxA
\newsavebox\myboxB
\newlength\mylenA

\newcommand*\xoverline[2][0.75]{%
    \sbox{\myboxA}{$\m@th#2$}%
    \setbox\myboxB\rholl
    \ht\myboxB=\ht\myboxA%
    \dp\myboxB=\dp\myboxA%
    \wd\myboxB=#1\wd\myboxA
    \sbox\myboxB{$\m@th\overline{\copy\myboxB}$}
    \setlength\mylenA{\the\wd\myboxA}
    \addtolength\mylenA{-\the\wd\myboxB}%
    \ifdim\wd\myboxB<\wd\myboxA%
       \rlap{\hskip 0.5\mylenA\usebox\myboxB}{\usebox\myboxA}%
    \else
        \hskip -0.5\mylenA\rlap{\usebox\myboxA}{\hskip 0.5\mylenA\usebox\myboxB}%
    \fi}
\makeatother

\newcommand{\Mb}{\overbar{M}}

\renewcommand{\epsilon}{\varepsilon}
\renewcommand{\Re}{\mathrm{Re}}

\usepackage{etoolbox}

\makeatletter
\apptocmd{\thebibliography}{%
  \setlength{\itemsep}{1pt}%
  \setlength{\parskip}{1pt}%
}{}{}
\makeatother

\usepackage{setspace}
\theoremstyle{plain}
\newtheorem{theorem}{Theorem}[section]
\newtheorem{lemma}[theorem]{Lemma}
\newtheorem{hypothesis}[theorem]{Hypothesis}
\newtheorem{example}[theorem]{Example}
\newtheorem{proposition}[theorem]{Proposition}

\newtheorem{definition}[theorem]{Definition}
\newtheorem{remark}[theorem]{Remark}

\usepackage[labelfont=it]{caption}

\usepackage{titlesec}

\makeatletter
\renewenvironment{proof}[1][\proofname]{\par
  \pushQED{\qed}%
  \normalfont \topsep6\p@\@plus6\p@\relax
  \trivlist
  \item[\hskip\labelsep
        \bfseries #1.] 
}{%
  \popQED\endtrivlist\@endpefalse
}
\makeatother
\usepackage{fancyhdr}
\usepackage{xcolor}
\usepackage{hyperref}

\definecolor{mygreen}{RGB}{0,120,70}

\usepackage{xcolor}

\definecolor{bibgreen}{RGB}{0,120,70}

\usepackage{hyperref}

\definecolor{citeblue}{RGB}{85,125,185}   
\definecolor{urlgreen}{RGB}{75,135,90}
\definecolor{linkblue}{RGB}{45,100,180}
\definecolor{darkred}{RGB}{185,60,65}

\hypersetup{
    colorlinks=true,
    linkcolor=darkred,
    citecolor=linkblue,
    urlcolor=urlgreen
}

\fancypagestyle{plain}{
  \fancyhf{}
  \fancyfoot[C]{\raisebox{10mm}{\thepage}}

}

\usepackage[utf8]{inputenc}
\usepackage[T1]{fontenc}
\usepackage[english]{babel}
\usepackage{titlesec} 

\makeatletter
\renewcommand{\@seccntformat}[1]{%
  \csname the#1\endcsname.\quad
}
\makeatother

\titleformat{\section}[block]
  {\normalfont\large\bfseries \scshape \centering}
  {\thesection.}
  {0.3em}
  {}

\titleformat{\subsection}[runin]
  {\normalfont\normalsize\bfseries}
  {\thesubsection.}
  {0.35em}
  {}
  [.]

\titlespacing*{\subsection}
  {0pt}
  {1.25ex plus .2ex minus .2ex}
  {0.5em}

\titleformat{\subsubsection}[runin]
  {\normalfont\normalsize}
  {\thesubsubsection.}
  {0.35em}
  {\itshape}
  [.]
\titlespacing*{\subsubsection}
  {0pt}
  {1.25ex plus .2ex minus .2ex}
  {0.5em}

\usepackage{titling}
\posttitle{\par\end{center}\vspace{-0.4em}}

\usepackage{etoolbox}
\AtBeginEnvironment{abstract}{\small}

\title{\large\bfseries \scshape 
THE PSEUDO-RIEMANNIAN SPECTRAL ZETA FUNCTION DENSITY ON SCATTERING MANIFOLDS
}

\author{\scshape Ashkan Sadat Kyaee \\[0em]}

\date{}

\usepackage{fancyhdr}

\usepackage{titling}

\begin{document}

\vspace{2em}
  
\maketitle
\vspace{-4em}

\begin{center}
\begin{minipage}{0.92\textwidth}

\fontsize{10}{12}\selectfont \textbf{\scshape Abstract.} We study spectral zeta densities associated with complex powers of principally scalar Laplace-type operators on pseudo-Riemannian scattering spaces of mixed signature and arbitrary dimension. The construction extends, beyond the even-dimensional Lorentzian case in \cite{Dang:2020jch}, the local analysis of Feynman complex powers developed by Dang--Wrochna. Namely, for $\varepsilon > 0$, we consider complex powers \((P\pm i\varepsilon)^{-\alpha}\) and  prove that the diagonal trace densities admit meromorphic continuation. We relate their poles to local geometric invariants, in particular to the pseudo-Riemannian scalar curvature. Our proof combines uniform Feynman resolvent estimates with a direct analysis of the Hadamard parametrix in pseudo-Riemannian signature. Beyond the Lorentzian case, the use of causal convexity is replaced by a geometric no-return argument.

\end{minipage}
\end{center}

\vspace{1.5 em}

\section{introduction}

\subsection{Introduction and main results}

 Consider a compact Riemannian manifold $(M,g)$ of dimension $n$, and let $\Delta_g$ be the Laplace--Beltrami operator. Minakshisundaram--Pleijel \cite{Minakshisundaram_Pleijel_1949} and Seeley \cite{Seeley:1967ea} proved that the trace density of $(- \Delta_g)^{- \alpha}$, defined as the on-diagonal restriction of the Schwartz kernel $(- \Delta_g)^{- \alpha}(x,x')$, exists for all $\alpha \in \C$ such that $\Re(\alpha) > \frac{n}{2}$ and extends to a density-valued meromorphic
function. Kastler \cite{6Kastler:1993zj} and Kalau--Walze \cite{7Kalau_1995} expressed its residues in terms of local geometric invariants, such as the scalar curvature. Namely,
for $n \geq 4$, 
\begin{equation*} 
    \operatorname*{res}_{\alpha=\frac n2-1} (- \Delta_g)^{- \alpha}(x,x) = \frac{R_g(x)}{6(4\pi)^{\frac{n}{2}} \Gamma\left(\frac{n}{2} -1 \right)}, 
\end{equation*}
where $R_g(x)$ is the scalar curvature of $(M,g)$ at $x$. In dimension $n = 4$, this formula made precise Connes' observation that the
Wodzicki residue of the inverse square of the Dirac operator is proportional to the Einstein--Hilbert action of general relativity.  In particular, this means that gravity could be derived from a \emph{spectral action}, at least in Riemannian signature; see \cite{Chamseddine_1997}.

The generalization to Lorentzian manifolds, and, a fortiori, to pseudo-Riemannian manifolds, is highly nontrivial. While formal heat kernel coefficients for the wave operator on a Lorentzian manifold can be defined through transport equations similar to those in the Riemannian setting and interpreted using a Lorentzian Hadamard parametrix, their precise connection with global spectral-theoretic objects presents many challenges. Amongst others, the main difficulty is without any doubt the non-ellipticity of the wave operator. A major advance was the construction of the complex powers of the wave operator on \emph{non-trapping Lorentzian scattering spaces} by Dang and Wrochna
\cite{Dang:2020jch}, where the metric and the Hamilton flow associated to the principal symbol of the wave operator are assumed to behave asymptotically in a certain way.  Their work relies on the essential self-adjointness of
the wave operator, due to Vasy
\cite{Vasy2017EssentialSO}, together with Feynman resolvent estimates and a
uniform Hadamard parametrix. In even dimension, Dang and Wrochna proved that complex powers of the wave operator are related to the underlying spacetime geometry in a fashion largely analogous to what is known in the Riemannian setting, which confirms en passant the actual physical significance of the spectral action principle; see the scalar-curvature residue in 
\cite[Prop.~8.5]{Dang:2020jch}. These Lorentzian residues coincide with the dynamical residues defined in \cite{DangWrochna2022}, which generalize the Guillemin–Wodzicki residue density of pseudodifferential operators.

The present work addresses the problem of extending these results to general pseudo-Riemannian signature, as well as to odd-dimensional manifolds. We formulate
our results for principally scalar Laplace-type operators acting on a Hermitian vector bundle \(E\to M\). The main new difficulty in higher signature is not the local Hadamard
construction itself, which remains available in convex normal neighborhoods,
but the control of the error created by cutting off the parametrix. In the
Lorentzian case, causal convexity prevents null bicharacteristics from leaving
the local region and returning to the diagonal. In general pseudo-Riemannian
signature, there is no causal order with this property. We nevertheless show that the complex powers admit a well-defined on-diagonal restriction and compute the corresponding residues. Our main result is the following theorem.

 \begin{theorem} [cf.~Theorem $\ref{thmexpanresol}$ and Proposition \ref{propthmfinal}] \label{mainthemintro} Let $(M,g)$ be a pseudo-Riemannian scattering space of arbitrary dimension $n$ and signature $(\kappa,n - \kappa)$, and let $E$ be a Hermitian bundle. Let $P \in \mathrm{Diff}^{2,0}_{\mathrm{sc}}(M;E)$ be of Laplace-type and assume it is non-trapping in the sense of Hypothesis \ref{theH}. Then, for all $\epsilon > 0$, the Schwartz kernel of $(P {\pm} i\epsilon)^{-\alpha}$ has for $\Re(\alpha) > \frac{n}{2}$ a well-defined on-diagonal restriction $\iota_\Delta^*(P\pm i\varepsilon)^{-\alpha} \in \pazocal{D}'\left(M;\mathrm{End}(E)\right),$ which extends as a meromorphic function of $\alpha \in \C$ with poles in $$\pazocal{P}_n := \left\{ \frac{n}{2} - k \ \vert \ k \in \N_0, \ \tfrac{n}{2} - k \notin - \N_0 \right\}.$$ The residues are given by
\begin{equation*} 
    \lim_{\epsilon \to 0^+}
    \operatorname*{res}_{\alpha=\frac n2-k}
    (P \pm i\epsilon)^{-\alpha}(x,x)
    =
    \frac{(\mp i)^\kappa u_k(x,x)}
    {(4\pi)^{n/2}\Gamma\!\left(\frac n2-k\right)} \in \mathrm{End}(E_x),
\end{equation*}
for every $k \in \N_0$ such that $\tfrac{n}{2} - k \in \pazocal{P}_n$, where the $u_k$ are the Hadamard coefficients of $P$ in the convention of Section \ref{transeq}.
\end{theorem}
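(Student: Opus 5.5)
Writing $(P\pm i\epsilon)^{-\alpha}$ as a contour integral of the resolvent reduces everything to uniform control of $(P-z)^{-1}$. Since $P$ is essentially self-adjoint (by Vasy's argument, which carries over to the non-trapping pseudo-Riemannian scattering setting of Hypothesis \ref{theH}), functional calculus gives
\begin{equation*}
(P\pm i\epsilon)^{-\alpha}=\frac{1}{2\pi i}\int_{\gamma_\epsilon}(z\pm i\epsilon)^{-\alpha}(P-z)^{-1}\,dz .
\end{equation*}
Here $\gamma_\epsilon$ is a contour enclosing the real spectrum and avoiding the branch point $\mp i\epsilon$. For large $|z|$ it runs along two rays in the half-plane where $\pm\Im z>0$. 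The plan has three steps.

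\textbf{Step 1: split the resolvent.} We write $(P-z)^{-1}=H_N(z)+R_N(z)$. The term $H_N(z)$ is a uniform Hadamard parametrix built in a convex normal neighbourhood of the diagonal. It is obtained by localizing with a cutoff $\chi$ near $\Delta$ the finite sum
\begin{equation*}
\sum_{k\le N}u_k(x,x')\,\mathbf F_k\bigl(z,\sigma(x,x')\bigr),
\end{equation*}
where the $u_k$ solve the transport equations of Section \ref{transeq}, $\sigma$ is the signed squared geodesic distance, and $\mathbf F_k(z,\cdot)$ are the explicit Bessel-type kernels of $(-z)^{k}$-homogeneous resolvents of the flat model $\partial_{x_1}^2+\dots-\dots$ of signature $(\kappa,n-\kappa)$.

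\textbf{Step 2: the local term.} We insert $H_N$ into the contour integral and restrict to the diagonal. Each term yields, by scaling, the flat pseudo-Riemannian computation
\begin{equation*}
\iota_\Delta^*\,\frac{1}{2\pi i}\int_{\gamma_\epsilon}(z\pm i\epsilon)^{-\alpha}\mathbf F_k\,dz=\frac{(\mp i)^{\kappa}(\pm i\epsilon)^{\frac n2-k-\alpha}\,\Gamma(\alpha-\frac n2+k)}{(4\pi)^{n/2}\,\Gamma(\alpha)}+\text{holomorphic}.
\end{equation*}
The factor $(\mp i)^\kappa$ arises because each of the $\kappa$ timelike Gaussian directions contributes a Fresnel factor $e^{\mp i\pi/4}$ twice; equivalently it comes from $\det(g)^{-1/2}$ after Wick rotation in the Feynman sense. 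Multiplying by $u_k(x,x)$ gives meromorphy with poles at $\frac n2-k$, cancelled when $\frac n2-k\in-\N_0$ by $1/\Gamma(\alpha)$. The residue at $\alpha=\frac n2-k$ is the stated value times $(\pm i\epsilon)^0=1$, so letting $\epsilon\to0^+$ gives the formula.

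\textbf{Step 3: the remainder.} We show $R_N(z)$ has a kernel in $C^{m(N)}$ near $\Delta$ with bounds $O(|z|^{-\delta(N)})$ uniformly along $\gamma_\epsilon$. Then its contribution is holomorphic in $\Re\alpha>-\delta(N)+1$, and letting $N\to\infty$ gives the full meromorphic continuation. We write
\begin{equation*}
R_N=-(P-z)^{-1}\bigl(\,[P,\chi]\text{-terms}+\chi\, u_N\,\square\mathbf F_N\bigr).
\end{equation*}
The transport remainder is smooth of high order with decay in $z$. It is handled by the uniform Feynman resolvent estimates on variable-order scattering Sobolev spaces, which hold uniformly for $z$ in the sectors of $\gamma_\epsilon$ by the non-trapping hypothesis.

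\textbf{Main obstacle: the cutoff commutator.} The commutator term $(P-z)^{-1}[P,\chi]\mathbf F$ is the genuinely hard one. The wavefront set of $[P,\chi]\mathbf F$ lies on null bicharacteristics emanating from the diagonal, supported where $d\chi\neq0$. The Feynman resolvent propagates singularities forward or backward along the Hamilton flow. In the Lorentzian case, causal convexity prevents such bicharacteristics from returning to $\Delta$. Here I would instead prove a no-return lemma: a null geodesic leaving a sufficiently small convex neighbourhood $U\ni x$ cannot re-enter a smaller neighbourhood $U'\ni x$ in $T^*M$-direction conormal to $\Delta$. The argument combines non-trapping (every bicharacteristic tends to the radial sets at infinity) with the fact that on the compactified space the flow has no closed orbits. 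Compactness of $\Delta$ in $\Mb$, together with a uniform choice of $U$ near the boundary using the scattering structure, should make this uniform. One then concludes, via the propagation estimates with semiclassical parameter $|z|^{-1/2}$, that the restriction of this term to $\Delta$ is $O(|z|^{-\infty})$ in $C^\infty$. This gives the needed holomorphy and completes the proof.
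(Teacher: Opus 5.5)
There is a genuine gap in Step 3, although you already have most of the idea needed to repair it. Your overall architecture matches the paper: the contour formula, the cut-off Hadamard parametrix $\chi\sum_{j\le N}u_j\mathbf F_j(z)$, the split of the error into a $q_N$ part and a $[P,\chi]$ part, a no-return argument from the absence of closed bicharacteristics, and the Schwinger/Fresnel evaluation of the local terms.

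\textbf{The gap.} You claim that the commutator part of the remainder has a $C^{m(N)}$ kernel near $\Delta$, and that its diagonal restriction is $O(|z|^{-\infty})$ in $C^\infty$. Neither follows from Hypothesis~\ref{theH}. Non-trapping only excludes a return to the \emph{same} point of $S^*M$: if $(q,q)\in\Lambda'_{\mathrm F}\circ\Lambda'_\chi$, then $\Phi_{s+t}(q)=q$ with $s,t>0$, which is a periodic orbit. This is Lemma~\ref{lemconv}. It does not exclude the following situation:
\begin{itemize}
\item a null bicharacteristic leaves $(x;\xi)$;
\item it passes through a point $q_1$ with $(\pi(q_1),x)\in\operatorname{supp}d\chi$;
\item it comes back over the same base point at $(x;\eta)$, with $\eta$ not a positive multiple of $\xi$.
\end{itemize}
In mixed signature there is no causal order that forbids such self-intersections. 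Such a return can put $(x,x;\eta,-\xi)$ into the wavefront set of the remainder. The restriction to $\Delta$ then may have wavefront at $(x;\eta-\xi)\neq0$, so it need not be smooth, with or without decay in $z$.

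Smoothness near the diagonal is exactly what the paper obtains only under the additional relative forward convexity assumption (Definition~\ref{defrelforconvdef}, Theorem~\ref{thmexpanresol}(1)). For the same reason, your version of the no-return lemma on a whole neighbourhood $U'$ is not a consequence of non-trapping. Only the statement exactly over $\Delta$ with equal covectors, namely $\pazocal R'_\chi\cap D'_+=\varnothing$, is.

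\textbf{The repair.} Draw the weaker conclusion the paper draws. This is enough, because the statement only asks for a distributional restriction and the residues come from the local terms alone. The steps are:
\begin{itemize}
\item The contour-integrated remainder $R_N(i\varepsilon,\alpha)$ has $\mathrm{WF}'\subset\pazocal R'_\chi=\Lambda'_{\mathrm F}\circ\Lambda'_\chi$. To see this, microlocalize with $C_1,C_2$ elliptic at a pair $(q_1,q_2)\notin\pazocal R'_\chi$ and Bochner-integrate the resulting $\pazocal O(\langle z\rangle^{-p-2})$ bounds.
\item $\pazocal R'_\chi$ does not meet $D'_+$, and $D'_+$ is the primed form of $N^*\Delta$.
\item H\"ormander's pullback theorem then defines $\iota_\Delta^*R_N$ as a distribution, holomorphic for $\Re\alpha>-p$.
\end{itemize}

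\textbf{Smaller points.}
\begin{itemize}
\item Under Hypothesis~\ref{theH}, $P$ need not be symmetric, since $P-P^*\in\Psi^{1,-1-\delta}_{\mathrm{sc}}$ is allowed. So essential self-adjointness and the functional calculus are not available. The contour integral is a \emph{definition}, justified by Theorem~\ref{propnonreal} and Proposition~\ref{propfredreal}. Compact deformations of $\gamma_\varepsilon$ only change it by a smoothing finite-rank term.
\item Your half-planes are swapped. For $(P\mp i\varepsilon)^{-\alpha}$ the contour lies in $\pm\operatorname{Im}z>0$, around the branch point $\pm i\varepsilon$. It is the upper-half-plane (Feynman) family that gives $i^\kappa$ for $(P-i\varepsilon)^{-\alpha}$, which is consistent with the $(\mp i)^\kappa$ you actually compute.
\item The residue at $\alpha=\frac n2-k$ receives contributions from every $j\le k$, and the $j$-th one carries a factor $\varepsilon^{k-j}$. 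The $j=k$ term alone is independent of $\varepsilon$. So it is the $j<k$ terms that make the limit $\varepsilon\to0^+$ necessary.
\end{itemize}
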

Theorem \ref{mainthemintro} can be considered as the pseudo-Riemannian version of the results of Kastler and Kalau--Walze, built upon the framework and techniques developed by Dang--Wrochna and Vasy in \cite{Dang:2020jch, dang2024diracoperatorslocalinvariants} and extended in arbitrary signature and dimension. In particular, for $n \geq 3$, Proposition~\ref{propscalarcurvgeneral} shows that the residue at \(\alpha=\frac n2-1\) is governed by the local invariant
$$
-\frac{R_g}{6}\operatorname{Id}_E-B,$$
making the scalar-curvature contribution explicit for general Laplace-type operators \(P=\Box^\nabla+B\).

As an application, we choose to present the odd-dimensional analogues of the results proven by Dang--Wrochna in \cite{Dang:2020jch}, summarized in the note \cite{dang2022lorentzianspectralzetafunctions}, for $P = \Box_g$ the wave operator.
\begin{theorem} [cf.~Theorem \ref{thmresidlorentz} and Proposition \ref{proprescurvlorentz}] Let $(M,g)$ be a globally hyperbolic, non-trapping Lorentzian scattering space and assume its dimension $n$ is odd. Then, for all $\epsilon > 0$, the Schwartz kernel of $(\Box_g \pm i\epsilon)^{-\alpha}$ has for $\Re(\alpha) > \frac{n}{2}$ a well-defined on-diagonal restriction $\iota_\Delta^*(\Box_g \pm i\varepsilon)^{-\alpha} \in \pazocal{D}'(M),$ which extends as a meromorphic function of $\alpha \in \C$ with poles in $\pazocal{P}_n$. In particular, for $n \geq 3$,
    $$ \lim_{\epsilon \to 0^+}  \operatorname*{res}_{\alpha=\frac n2-1}(\Box_g  \pm i\epsilon)^{-\alpha}(x,x) = \pm \frac{i R_g(x)}{6(4\pi)^{\frac{n}{2}} \Gamma\left(\frac{n}{2} -1 \right)}, $$
    where $R_g(x)$ is the scalar curvature of $(M,g)$ at $x$.
\end{theorem}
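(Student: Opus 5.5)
This second theorem should follow by specializing the main theorem to $P=\Box_g$, $E$ the trivial line bundle, and signature $\kappa=1$; the only new work is checking that the hypotheses transfer and computing $u_1(x,x)$. First I will check that a globally hyperbolic, non-trapping Lorentzian scattering space with $P=\Box_g\in\mathrm{Diff}^{2,0}_{\mathrm{sc}}(M)$ satisfies Hypothesis \ref{theH}. The non-trapping assumption on the null bicharacteristic flow is exactly the Lorentzian instance of that hypothesis. Global hyperbolicity is not needed for the meromorphy itself, since the no-return argument replaces causal convexity. It does, however, make the Dang--Wrochna setting and Vasy's essential self-adjointness directly available. Theorem \ref{mainthemintro} then gives existence of $\iota_\Delta^*(\Box_g\pm i\epsilon)^{-\alpha}$ for $\Re\alpha>\frac n2$ and meromorphic continuation with poles in $\pazocal P_n$. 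For $n$ odd, $\frac n2-k$ is a half-integer, so it never lies in $-\N_0$. Hence $\pazocal P_n=\{\frac n2-k:k\in\N_0\}$ and no poles are cancelled by $1/\Gamma$.

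Next I will evaluate the residue at $\alpha=\frac n2-1$, which lies in $\pazocal P_n$ whenever $n\ge3$. The residue formula of Theorem \ref{mainthemintro} with $\kappa=1$ gives
\[
\lim_{\epsilon\to0^+}\operatorname*{res}_{\alpha=\frac n2-1}(\Box_g\pm i\epsilon)^{-\alpha}(x,x)=\frac{(\mp i)\,u_1(x,x)}{(4\pi)^{n/2}\Gamma(\frac n2-1)}.
\]
I then invoke Proposition \ref{propscalarcurvgeneral} with $\nabla$ the trivial connection and $B=0$. That proposition identifies $u_1(x,x)$ with $-\frac{R_g}{6}$, up to the normalization fixed in Section \ref{transeq}. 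Substituting gives $(\mp i)(-R_g/6)=\pm iR_g/6$, which is the claimed formula.

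The main obstacle is purely bookkeeping: the sign and normalization conventions must match. The conventions involved are the sign of $\Box_g$ relative to the Laplace-type normalization $P=\Box^\nabla+B$, the Lorentzian sign convention $(\kappa,n-\kappa)=(1,n-1)$, and the factor $(\mp i)^\kappa$ produced by Wick rotation in the Gaussian integrals of the Hadamard parametrix. I would verify these against the even-dimensional formula of \cite[Prop.~8.5]{Dang:2020jch}. The local computation of the residue is dimension-independent, so the odd-$n$ formula must have the same form as the even-$n$ one. The only difference is that in even dimension the factor $1/\Gamma$ kills some poles, whereas in odd dimension none are lost. This consistency check fixes the overall sign and confirms the $\pm i$ factor.
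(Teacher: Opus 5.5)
Your argument is correct. Its last two steps coincide with the paper's: Proposition \ref{propthmfinal} with $\kappa=1$ gives the factor $\mp i$ for $(\Box_g\pm i\varepsilon)^{-\alpha}$, and (\ref{u_1curv}) with the trivial connection and $B=0$ gives $u_1(x,x)=-R_g(x)/6$.

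Where you differ is in how the diagonal restriction is justified. The paper's proof of Theorem \ref{thmresidlorentz} genuinely uses global hyperbolicity. Global hyperbolicity gives strong causality, hence causally convex normal neighborhoods, hence relatively forward convex cut-off pairs $(\U_0,\chi)$ as in Remark \ref{remstronglycausal}. Theorem \ref{thmexpanresol}(1) then applies, and the remainder $R_N(i\varepsilon,\alpha)$ is $C^s$ on a whole neighborhood $\U_0$ of the diagonal. This is the exact analogue of the Dang--Wrochna smooth local expansion, and it makes the restriction elementary.

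You instead invoke the main theorem, i.e.\ Theorem \ref{thmexpanresol}(2). There the remainder is only known to have wavefront set in $\pazocal{R}'_\chi$, and it is restricted by H\"ormander's criterion using the no-return Lemma \ref{lemconv}. This suffices for the residues, since $\rho_N(i\varepsilon,\alpha)$ is holomorphic for $\Re(\alpha)>-p$. It also shows that global hyperbolicity is superfluous for this statement. The cost is a weaker, merely distributional control of the remainder near the diagonal instead of the $C^s$ expansion.

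Two minor points. First, when verifying Hypothesis \ref{theH} you should record that $\Box_g$ is formally self-adjoint with respect to $d\mathrm{vol}_g$, so $P-P^*=0$ and the first alternative holds. Second, the consistency check against \cite[Prop.~8.5]{Dang:2020jch} is unnecessary. The sign is already fixed by the Fresnel phase $i^\kappa$ in Proposition \ref{propkfolwick}, which is exactly where the factor $(\mp i)^\kappa$ you used comes from.
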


Finally, we introduce the \emph{scattering Dirac operator}  and extend the residue formula of \cite[Thm.~5.7]{dang2024diracoperatorslocalinvariants} to the complex powers of its square in arbitrary signature and dimension.

\begin{theorem} [cf.~Theorem \ref{thmdiracresidue}]
    Let $(M,g)$ be a pseudo-Riemannian scattering space of dimension $n$ and signature $(\kappa,n-\kappa)$. Consider a Clifford module $(E,\nabla^E)$, let $\slashed{D} \in \operatorname{Diff}_{\mathrm{sc}}^{1,0}(M;E)$ be a non-trapping Dirac operator in the sense of Definition \ref{Dnontrap} and put $ P=-\slashed{D}^{\,2}$. Then, for all $\epsilon > 0$, the Schwartz kernel of $(P {\pm} i\epsilon)^{-\alpha}$ has for $\Re(\alpha) > \frac{n}{2}$ a well-defined on-diagonal restriction $$ \iota_\Delta^*(P\pm i\varepsilon)^{-\alpha} \in \pazocal{D}'\left(M;\mathrm{End}(E)\right),$$ which extends as a meromorphic function of $\alpha \in \C$ with poles in $\pazocal{P}_n$. Furthermore, for $n \geq 3$,
\begin{equation} \label{psRspectral}
    \lim_{\varepsilon\to0^+}
\underset{\alpha=\frac{n}{2}-1}{\operatorname{res}}
    (P\pm i\varepsilon)^{-\alpha}(x,x)
= \frac{(\mp i )^{\kappa}R_g(x)}
     {12(4\pi)^{\frac n2}
      \Gamma\!\left(\frac n2-1\right)} \mathrm{Id}_{{E_x}} + \frac{(\mp i )^{\kappa}\mathbf{F}^E(x)}
     {(4\pi)^{\frac n2}
      \Gamma\!\left(\frac n2-1\right)},
\end{equation}
where $\mathbf{F}^E$ is the Clifford contraction of the twisting curvature $F^E$ of $E$.
\end{theorem}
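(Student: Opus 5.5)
The statement is the Dirac case. The plan is to reduce it to Theorem \ref{mainthemintro}, together with the scalar-curvature residue formula of Proposition~\ref{propscalarcurvgeneral}, applied to $P=-\slashed{D}^{\,2}$. The argument has three steps.

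\emph{Step 1: $P$ is an admissible Laplace-type operator.} Since $\slashed{D}\in\operatorname{Diff}^{1,0}_{\mathrm{sc}}(M;E)$, its square lies in $\operatorname{Diff}^{2,0}_{\mathrm{sc}}(M;E)$. The Clifford relation $c(\xi)^2=-g^{-1}(\xi,\xi)$, up to the paper's sign convention, gives the principal symbol of $P$ as $g^{-1}(\xi,\xi)\,\mathrm{Id}_E$. Hence $P$ is principally scalar and of Laplace type, and its scattering principal symbol at infinity is the same as that of $\Box_g\otimes\mathrm{Id}_E$. The non-trapping condition of Definition~\ref{Dnontrap} is designed so that the bicharacteristic flow of $\sigma(P)$ satisfies Hypothesis~\ref{theH}; I expect this to be an unwinding of definitions, because the Hamilton flow depends only on the scalar principal symbol.

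Theorem~\ref{mainthemintro} then gives three things at once: existence of the on-diagonal restriction for $\Re\alpha>\frac n2$, meromorphic continuation with poles in $\pazocal{P}_n$, and the residue at $\alpha=\frac n2-1$ equal to $(\mp i)^\kappa u_1(x,x)/\big((4\pi)^{n/2}\Gamma(\frac n2-1)\big)$.

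\emph{Step 2: identify $u_1(x,x)$ via the Weitzenböck formula.} By the Lichnerowicz--Weitzenböck formula, written in pseudo-Riemannian signature with the paper's conventions, $-\slashed{D}^{\,2}=\Box^{\nabla^E}+B$ with
\[
B=-\tfrac{R_g}{4}\,\mathrm{Id}_E-\mathbf{F}^E ,
\]
where $\mathbf{F}^E=\tfrac12\sum_{i,j}c(e^i)c(e^j)F^E(e_i,e_j)$, suitably signed by $g(e_i,e_i)$, is the Clifford contraction of the twisting curvature. Proposition~\ref{propscalarcurvgeneral} gives $u_1(x,x)=-\frac{R_g}{6}\mathrm{Id}_E-B$. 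Substituting,
\[
u_1(x,x)=\Big(-\tfrac16+\tfrac14\Big)R_g\,\mathrm{Id}_E+\mathbf{F}^E=\tfrac{R_g}{12}\,\mathrm{Id}_E+\mathbf{F}^E ,
\]
which, inserted into the residue from Step 1, is exactly \eqref{psRspectral}.

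\emph{Step 3: check the signs in indefinite signature.} This is the main obstacle. The Weitzenböck identity must be derived carefully in a pseudo-orthonormal frame with $g(e_i,e_i)=\pm1$. One has to make sure that the Clifford convention, the sign of $\Box$, and the curvature convention combine to give exactly $-R_g/4$ and $-\mathbf{F}^E$, not their negatives; otherwise the coefficient $\tfrac1{12}$ would become $-\tfrac5{12}$. I would first verify the formula in the Riemannian case against \cite[Thm.~5.7]{dang2024diracoperatorslocalinvariants}. Then I would redo the computation $\slashed{D}^2=\sum_{i,j}c(e^i)c(e^j)\nabla_{e_i}\nabla_{e_j}$, splitting into the symmetric part (the Laplacian) and the antisymmetric part. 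The antisymmetric part produces the curvature of the spinor part of $\nabla^E$, which yields $R_g/4$ after using the Bianchi identity, plus the twisting curvature $F^E$. Throughout, raised indices should be tracked with $g^{ij}$ so that the signature signs cancel as in the Riemannian case.
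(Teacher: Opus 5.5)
Your proposal is correct and follows the paper's proof. You verify Hypothesis~\ref{theH} via Definition~\ref{Dnontrap}, apply Theorem~\ref{thmexpanresol} and Proposition~\ref{propthmfinal}, and write $P=\Box^{\nabla^E}+B$ with $B=-\mathbf{F}^E-\frac{R_g}{4}\mathrm{Id}_E$ by Bochner--Lichnerowicz, so that Proposition~\ref{propscalarcurvgeneral} gives $u_1(x,x)=\frac{R_g}{12}\mathrm{Id}_E+\mathbf{F}^E$.

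The paper cites the pseudo-Riemannian Bochner--Lichnerowicz formula (with $(\nabla^E)^*$ taken with respect to the $\nabla^E$-parallel form $\beta$, so that $-(\nabla^E)^*\nabla^E=\Box^{\nabla^E}$) rather than rederiving it as in your Step~3; your symmetric/antisymmetric splitting has the minor advantage of bypassing the choice of inner product. One small correction: with the paper's conventions, $\sigma_2(P)=-g^{-1}(\xi,\xi)\,\mathrm{Id}_E$, not $+g^{-1}(\xi,\xi)\,\mathrm{Id}_E$.
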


\subsection{Main ideas of the proof} For a differential operator $P \in \mathrm{Diff}^{2,0}_{\mathrm{sc}}(M;E)$ of Laplace-type, the first key feature that we exploit is that, for $\mathrm{Im}(z) > 0$, the resolvent $(P-z)^{-1}$ is a \emph{Feynman inverse}. In other words, it behaves as the \textit{Feynman propagator} on pseudo-Euclidean space, which is the Fourier multiplier by $$(Q_\kappa(\xi) - i0)^{-1}, \qquad Q_\kappa(\xi)
    =
    -\xi_1^2-\cdots-\xi_\kappa^2
    +\xi_{\kappa+1}^2+\cdots+\xi_n^2. $$ This result is formulated in terms of a uniform operator Sobolev wavefront set $\wf'^{(s)}_{h(z)}$, and relies on the non-trapping structure of $(M,g)$. Consequently, the Schwartz kernel of the resolvent $(P-z)^{-1}$ can be approximated by the Feynman version of the \textit{uniform Hadamard parametrix} denoted by $H_N(z)$, expressed in terms of the elementary family of distributions $\mathbf{F}_{\alpha}(z)$ on a specific neighborhood of the diagonal $\Delta$ in $M \times M$. The approximation of the resolvent by the Hadamard parametrix will read 
    \begin{equation} \label{approxid}
        \resP = H_N(z) + E_{N,1}(z) + E_{N,2}(z),
    \end{equation}
    where $E_{N,1}(z)$ and $E_{N,2}(z)$ are considered as error terms.
    Complex powers $(P-i\varepsilon)^{-\alpha}$ are then expressed as a contour formula involving the Feynman resolvent, namely for $\varepsilon >
 0$,
\begin{equation} \label{eqcontourintro} (P -i\epsilon)^{-\alpha} = \frac{1}{2 \pi i } \int_{\gamma_{\epsilon}} (z -i\epsilon)^{-\alpha}(P - z)^{-1} dz,
\end{equation}
considered as an operator in $\pazocal{B}\left(L^2(M;E)\right)$ for $\Re (\alpha) > 0$. Note that it is indispensable that the estimates for the resolvent, parametrix and error terms be \textit{uniform in $z$} with sufficient decay along the infinite complex contour $\gamma_{\varepsilon}$. Our aim is to precisely make sense of the complex powers as distributions near the diagonal that admit a well-defined restriction to the diagonal in $M \times M$.

The main difficulty of the higher-signature pseudo-Riemannian case is that the Lorentzian causal argument used to control the errors is no longer available. In the Lorentzian setting, one can work in causally convex neighborhoods, so that singularities created by the cut-off in the Hadamard parametrix cannot leave the local region and return to the diagonal. In mixed signature, however, the null cone does not in general determine a distinguished future/past decomposition, and hence there is no canonical causal order or corresponding notion of causal convexity with the same propagation property. We therefore replace this argument by a \emph{geometric no-return} property: using the non-trapping structure of the Hamilton flow associated to the principal symbol of $P$, we show that the return relation, denoted by $\pazocal{R}'_{\chi}$, does not meet the phase-space diagonal. Namely, by Lemma \ref{lemconv}, it reads $
    \pazocal{R}'_{\chi} \cap D'_+
    =
    \varnothing$, while the wavefront set of the first error term is shown to lie within the return relation $\pazocal{R}'_{\chi}$ in the proof of Theorem \ref{thmRz}.
    
This no-return property is precisely what is needed, through Hörmander’s pullback criterion, to define the diagonal restriction of the contour-integrated remainder appearing in the complex powers (\ref{eqcontourintro}) after replacing the resolvent by the Hadamard approximation identity (\ref{approxid}).

We also introduce a stronger condition in Definition \ref{defrelforconvdef}, namely \emph{relative forward convexity}, which is needed to recover the sharper form of the Lorentzian local expansion. Under this additional hypothesis, the propagated cut-off error does not return to a whole neighborhood of the base diagonal. Consequently, the remainder admits a well-defined diagonal pullback, and is as regular as we want it to be. In this sense, the no-return argument gives the minimal geometric control needed for the spectral zeta density, while relative forward convexity is needed to get the stronger regularity statement familiar from the Lorentzian framework.

In the end, the remaining step is the computation of the local residues. The study of the on-diagonal restriction of the complex powers $\Pa$ is reduced to the analysis of the elementary family $\mathbf F_\alpha^\kappa(x,y)$ at $x=y$.
This value is computed using the Schwinger parameter representation and the Fresnel formula for the quadratic form $Q_\kappa$. We prove it admits meromorphic continuation in $\alpha \in \C$ and we compute its residues for arbitrary $n$. Note that the parity of $n$ only comes into play when we compute the residues at the very end.

\subsection{Related work and context} We conclude the introduction with a few remarks placing the present work in the context of related developments.

The breakthrough that led to the exploration of the connection between local geometric invariants and global spectral objects in indefinite signature was Vasy’s proof of the essential self-adjointness of the wave operator on non-trapping Lorentzian scattering spaces \cite{Vasy2017EssentialSO}. This result builds on the framework of Lorentzian sc-metrics, first introduced in \cite{baskin2014asymptoticsradiationfieldsasymptotically}, and then studied in more detail in \cite{Hintz:2013lka, Gell_Redman_2016}, together with the analysis of the Feynman propagator. We note that, more recently, Baskin--Doll--Gell-Redman \cite{BaskinDollGellRedman2025} extended the construction of Feynman propagators to massive Klein--Gordon operators with asymptotically static perturbations, using the $3\mathrm{sc}$-calculus. Essential self-adjointness was extended by
Nakamura–Taira \cite{9nakamura2019essentialselfadjointnessrealprincipal} to other differential operators of real principal type on long-range perturbations of Minkowski space, and recently to broader classes of asymptotically Minkowski spacetimes by Jia--Molodyk--Sussman \cite{Jia2026}. The present work builds on the spectral results of \cite{dang2024diracoperatorslocalinvariants} for non-trapping operators that are not necessarily self-adjoint, but whose failure of self-adjointness is controlled in the sc-calculus; see \cite[\S~2.5]{dang2024diracoperatorslocalinvariants}.

Related developments that connect the global analytic theory with local geometric invariants include, alongside the work of Dang--Wrochna \cite{Dang:2020jch}, a Lorentzian Atiyah-Patodi-Singer index theory and its local counterpart due to Bär--Strohmaier \cite{article}. In a different direction, Strohmaier–Zelditch \cite{Strohmaier_2021} give a  spectral-theoretical procedure to recover the scalar curvature, and hence a spectral action, in the stationary case. We also mention the work of Zeitoun \cite{zeitoun2025feynmanspectralactionwave}, whose construction is closely related to that developed in the present paper and which defines a spectral action of the wave operator on asymptotically de Sitter spaces. A common feature of these developments is the central rôle played by Feynman inverses and by the Hadamard parametrix.

Finally, it is worth emphasizing the connection between these results and some developments in Lorentzian noncommutative geometry; see for instance \cite{Verch:2011ry, BESNARD2018292, Bizi}, and \cite{Filaci}. These works are primarily concerned with extending the algebraic framework of spectral triples to the Lorentzian setting. In particular, although these constructions rely on a passage from Euclidean signature and on additional symmetry or analyticity assumptions, we mention results on the spectral action due to Devastato--Farnsworth--Lizzi--Martinetti \cite{Devastato:2017rlo} and Martinetti--Singh \cite{Martinetti:2019wgh}. Closer to the setting considered here, recent developments by Nieuviarts \cite{nieuviarts2026emergencetimetwistedspectral} and Martinetti \cite{martinetti2026twistedstandardmodelkrein} show the emergence of pseudo-Riemannian signature from twisted spectral triples. In this context, formula~(\ref{psRspectral}) provides local geometric coefficients that are naturally relevant to a pseudo-Riemannian spectral action.

\subsection{Structure of the paper} In Section~\ref{geomsetup}, we introduce the framework of scattering geometry and recall the basic features of the scattering calculus, together with the notion of uniform operator wavefront set that will be used throughout. At the beginning of Section~\ref{pseudo-R}, we consider a scattering differential operator $P\in\mathrm{Diff}_{\mathrm{sc}}^{2,0}(M;E)$ of Laplace-type on a pseudo-Riemannian manifold $(M,g)$ of signature $(\kappa,n-\kappa)$. We recall the spectral and Fredholm properties of $P$ established in  \cite{dang2024diracoperatorslocalinvariants}, which will be used throughout the paper.

We then define the elementary family $F_\alpha^\kappa$ in arbitrary signature in Section \ref{sectelemfam}, and we show that it enjoys the same properties as in the Lorentzian case. Section \ref{transeq} defines the uniform Hadamard parametrix $H_N(z)$. In Section~\ref{sectlocalexppseudoRcxpow}, we relate the resolvent $\resP$ to the parametrix through an operator identity that involves the error terms. In Section~\ref{sectlocexpcxpow}, we define the complex powers as distributions and show that the restriction to the diagonal is justified by the no-return argument. The residues of the meromorphically continued diagonal family are computed in Section \ref{sectresidues}, for arbitrary $n$. The contribution of the scalar curvature in the residues constitutes Section \ref{sectsclarcurvcontrib}.

Finally, Section \ref{sectapp} is devoted to applications of the residue formula. First, we consider the wave operator in the odd-dimensional Lorentzian case; second, we compute the residues of the complex powers of the square of the scattering Dirac operator in arbitrary signature and dimension.

\section{scattering-geometry and analytic setup } \label{geomsetup}

Let $M$ be a smooth manifold of dimension $n \in \N$, and $g$ a pseudo-Riemannian metric of signature $(\kappa, n -\kappa)$, where $\kappa \in \{0,1, \ldots, n\}$. We denote by $L^2(M)$ the canonical $L^2$ space associated to the volume density $d\mathrm{vol}_g$ of $(M,g)$, namely its norm is
$$ \|u\| := \left( \int_M \vert u(x) \vert^2 \,d\mathrm{vol}_g \right)^{\frac{1}{2}}.$$
We assume that $M$ is the interior of a compact manifold with boundary $\, \overbar{M}$.

Let $\rho$ be a boundary defining function for $\, \overbar{M}$, i.e. $\rho$ is smooth, $\rho > 0$ on $M$, $\partial \, \overbar{M} = \{ \rho=0 \}$ and $d\rho$ does not vanish on $\partial \, \overbar{M}$. By the collar neighborhood theorem, there exists a neighborhood $W$ of the boundary $\partial \, \overbar{M}$ in $\, \overbar{M}$, $\epsilon > 0$ and a diffeomorphism $\phi : [0,\epsilon) \times \partial \, \overbar{M} \rightarrow W$ such that $\rho \circ\phi$ agrees with the projection to the first component of $[0,\epsilon) \times \partial \, \overbar{M}$. Thus, when we consider points close to the boundary we will use local coordinates of the form $(\rho, y_1, \dots, y_{n-1})$ where $y =  (y_1, \dots, y_{n-1})$ are local coordinates on $\partial \, \overbar{M}$ (we drop $\phi$ in the notation).

\begin{example} [Radial compactification] \label{radial} \textnormal{The \textit{radial compactification} of $\R^n$ is defined by the manifold with boundary $\overline{\R^n} = \R^n \sqcup \, \mathbb{S}^{n-1}$, described by the following construction. Let $x \in \R^n$ be non-zero, and let us consider it in polar coordinates $x = (r, \omega)$, where $r > 0 $ and $\omega \in \mathbb{S}^{n-1}$. Then 
$$ \overline{\R^n} := \left( \R^n \, \cup \, \left( [0, \infty)_{\rho} \times \mathbb{S}^{n-1} \right) \right) / \sim,$$
where $\R^n \ni x = (r, \omega) \sim (r^{-1}, \omega)$. If we put $\rho := r^{-1}$, it follows that $\rho^{-1}(0) \cong \mathbb{S}^{n-1}$, considered as the ``sphere at infinity'', namely the boundary of $\overline{\R^n}$. The manifold with boundary $\, \overline{\R^n}$ is diffeomorphic to the unit ball $\pazocal{B}^n$ of $\, \R^n$ (and in particular $\overline{\R}$ is a closed segment).}
\end{example}

\emph{Notation.} By construction, any scattering space is defined with respect to a manifold with boundary. Such a manifold will be $\, \overbar{M}$ in our case. To make the notation less cluttered, we will write $M$ instead of $\, \Mb$ when the scattering space is explicitly denoted with the subscript or superscript $\mathrm{sc}$ (which is always the case), such as $\Psi_{\mathrm{sc}}^{m,l} (M)$ or ${}^{\mathrm{sc}}S^*M$ for instance. The only exceptions to this rule concern the algebra of scattering vector fields $\pazocal{V}_{\mathrm{sc}}(\, \Mb)$, and the scattering tangent and cotangent bundles ${}^{\mathrm{sc}}T\, \Mb$ and ${}^{\mathrm{sc}}T^* \, \Mb$, still denoted with $\, \Mb$.

\subsection{Scattering geometry}

We start with a brief introduction to Melrose's \emph{scattering geometry}, first introduced in \cite{Melrose1994Spectral} for the study of the Laplacian on asymptotically Euclidean spaces. The algebra of \emph{scattering vector fields} \( \pazocal{V}_{\text{sc}}(\, \overbar{M}) \) is a Lie subalgebra of the algebra \( \pazocal{V}(\, \overbar{M}) \) of vector fields defined by
$$
\pazocal{V}_{\text{sc}}(\, \overbar{M}) = \rho\pazocal{V}_{\text{b}}(\, \overbar{M}),
$$
where $\rho$ is the boundary defining function of $\, \overbar{M}$, and $\pazocal{V}_{\text{b}}(\, \overbar{M})$ is the subalgebra of vector fields tangent to the boundary, namely 
$$
\pazocal{V}_{\text{b}}(\, \overbar{M}) = \{ V \in \pazocal{V}(\, \overbar{M}) \, ; \, V \rho\in \rho C^\infty(\, \overbar{M}) \},
$$
where $C^\infty(\, \overbar{M})$ denotes the space of smooth functions on $\, \overbar{M}$, meant in the usual sense of being smoothly extendible across the boundary $\partial \, \overbar{M}$. In local coordinates $(\rho, y_1, \ldots, y_{n-1})$, bases for these two algebras at a point \( p \) are respectively given by $$
\text{span}_\mathbb{R} \left\{ \rho\partial_\rho, \partial_{y_1}, \ldots, \partial_{y_{n-1}} \right\} \, \, \, \mathrm{and} \, \, \, \text{span}_\mathbb{R} \left\{ \rho^2 \partial_\rho, \rho\partial_{y_1}, \ldots, \rho\partial_{y_{n-1}} \right\}.$$
Therefore, over the ring of smooth functions $C^{\infty}(\, \overbar{M})$, the algebras $\pazocal{V}_{\text{b}}(\, \overbar{M})$ and $\pazocal{V}_{\text{sc}}(\, \overbar{M})$ can be considered as locally free sheaves of rank $n = \mathrm{dim}(M)$. By the smooth Serre--Swan theorem, they are the sections of well-defined vector bundles, namely the \textit{b} and \textit{scattering tangent bundles} ${}^{\text{b}} T\, \overbar{M}$ and \( {}^{\mathrm{sc}} T\, \overbar{M}\) respectively; see for instance \cite[Prop.~3.1--Thm 3.4]{VARILLY1993223}.

In local coordinates $(\rho, y_1, \ldots, y_{n-1})$, bases for ${}^{\text{b}} T_p \, \overbar{M}$ and \( {}^{\mathrm{sc}} T_p \, \overbar{M} \) at a point \( p \) are given by $$
{}^{\text{b}} T_p \, \overbar{M} = \text{span}_\mathbb{R} \left\{ \rho\partial_\rho, \partial_{y_1}, \ldots, \partial_{y_{n-1}} \right\}, \quad {}^{\mathrm{sc}} T_p \, \overbar{M} = \text{span}_\mathbb{R} \left\{ \rho^2 \partial_\rho, \rho\partial_{y_1}, \ldots, \rho\partial_{y_{n-1}} \right\},$$
and 
$$\pazocal{V}_{\text{b}}(\, \overbar{M}) = C^{\infty}(\,\overbar{M}; {}^{\text{b}} T \, \overbar{M}), \quad \pazocal{V}_{\text{sc}}(\, \overbar{M}) = C^{\infty}(\,\overbar{M}; {}^{\mathrm{sc}} T \, \overbar{M}).$$

The \emph{b} and \emph{scattering cotangent bundles} \( {}^{\text{b}} T^* \, \overbar{M} \) and \( {}^{\mathrm{sc}} T^* \, \overbar{M} \) are the dual bundles to \( {}^{\text{b}} T \, \overbar{M} \) and \( {}^{\mathrm{sc}} T \, \overbar{M} \), with bases in local coordinates given by
\[
{}^{\text{b}} T_p^* \, \overbar{M} = \text{span}_\mathbb{R} \left\{ \frac{d\rho}{\rho}, dy_1, \ldots, dy_{n-1} \right\}, \quad {}^{\mathrm{sc}} T_p^* \, \overbar{M} = \text{span}_\mathbb{R} \left\{ \frac{d\rho}{\rho^2}, \frac{dy_1}{\rho}, \ldots, \frac{dy_{n-1}}{\rho} \right\}.
\]
Note that the three vector bundles $T^*\, \overbar{M}$, ${}^{\text{b}} T^* \, \overbar{M}$ and ${}^{\mathrm{sc}} T^* \, \overbar{M}$ are isomorphic to $T^*M$ over the interior of $\, \Mb$.

\begin{definition} \label{scpseudospace}
    We say that $g$ is an \emph{sc-metric} if it is a non-degenerate smooth section of the fiberwise symmetrized tensor bundle ${}^{\mathrm{sc}} T^* \, \overbar{M} \otimes_s {}^{\mathrm{sc}} T^* \, \overbar{M}$. The manifold $(M,g)$ is a \emph{pseudo-Riemannian scattering space} of signature $(\kappa, n - \kappa)$ if $M$ compactifies into a manifold with boundary $\, \Mb$ such that $g$ is a pseudo-Riemannian metric over the interior $M$ that smoothly extends to an sc-metric. Here, the convention is that $\kappa$ denotes the number of ``pluses''.
\end{definition}
In particular, for $\kappa = 1$, we will say that $(M,g)$ is a \emph{Lorentzian scattering space}.

The volume density of $(M,g)$ then extends to an
sc-density on $\, \overbar{M}$, i.e.\ in local coordinates $(\rho,y)$ it is of
the form $$
    \mu(\rho,y)
    \left|
        \rho^{-2}\,d\rho\,\rho^{-n+1}\,dy
    \right| $$
for some $\mu \in C^\infty(\, \overbar{M})$. Note that, whenever we consider a pseudo-Riemannian scattering space $(M,g)$, we will implicitly fix an sc-density $d \mathrm{vol}_g$. 

For the sake of generality, in the following discussion on pseudodifferential operators one could actually take any smooth manifold $\, \overbar{M}$ with boundary and equip it with an sc-density to define the Hilbert space $L^2(\, \Mb)$.

\subsection{The sc-calculus}

 The fibers of ${}^{\mathrm{sc}} T^*\, \overbar{M}$ are radially compactified for microlocalization, which produces a manifold with corners, denoted by $\overline{{}^{\mathrm{sc}} T^* M}$. It has two boundary hypersurfaces, the first one being \textit{base infinity}, denoted by $\partial {}^{\mathrm{sc}} T^* \, \overbar{M}$, and the second one \textit{fiber infinity}, denoted by $\partial \overline{{}^{\mathrm{sc}} T^*} M$, so that 
$$\partial \overline{{}^{\mathrm{sc}} T^*M} = \partial {}^{\mathrm{sc}} T^* \, \overbar{M} \, \cup \partial \overline{{}^{\mathrm{sc}} T^*} M,$$
which includes the corner 
$$ \partial {}^{\mathrm{sc}} T^* \, \overbar{M} \, \cap \partial \overline{{}^{\mathrm{sc}} T^*} M \neq \emptyset.$$ We will sometimes refer to fiber infinity as the \textit{sc-cosphere bundle} of $M$, denoted by ${}^{\mathrm{sc}}S^*M$, and denote the corner by ${}^{\mathrm{sc}}S_{\partial M}^*M$. As boundary defining function of fiber infinity ${}^{\mathrm{sc}}S^*M$ we take
\[
    \rho_\infty
    =
    \langle (\tau,\mu)\rangle^{-1}
    =
    \bigl(|(\tau,\mu)|^2+1\bigr)^{-1/2},
\]
where \( |\cdot| \) denotes the length with respect to an arbitrarily chosen
Riemannian sc-metric. The coordinates on the fibers of ${}^{\mathrm{sc}}T^*\, \Mb$ are denoted by $(\tau,\mu_1,\ldots,\mu_{n-1})$, sc-dual to
$(\rho,y_1,\ldots,y_{n-1})$, i.e.\ sc-covectors are written as
\begin{equation*}
    \tau \frac{d\rho}{\rho^2}
    +
    \sum_{j=1}^{n-1}
    \mu_j \frac{dy_j}{\rho}
    :=
    \tau \frac{d\rho}{\rho^2}
    +
    \mu\cdot\frac{dy}{\rho},
\end{equation*}
while  $\rho$ could be considered as a boundary defining function of base infinity. For $s,\ell\in\mathbb{R}$, the class of \emph{scattering symbols} of order
$(s,\ell)$ is denoted by $
    S_{\mathrm{sc}}^{s,\ell}(T^*M)$ (strictly speaking, it must be denoted by $S^{s,\ell}({}^{\mathrm{sc}} T^* \, \overbar{M}))$.
Away from $\partial \,\overbar{M}$, this agrees with the usual symbol class
$S^s(T^*M)$. Near the boundary, a symbol
$a\in S_{\mathrm{sc}}^{s,\ell}(T^*M)$ satisfies
\begin{equation} \label{symbest}
    \left|
        (\rho\partial_\rho)^j
        \partial_y^\alpha
        \partial_\tau^k
        \partial_\mu^\beta
        a(\rho,y,\tau,\mu)
    \right|
    \leq
    C_{jk\alpha\beta}\,
    \rho^{-\ell}
     \langle (\tau,\mu)\rangle^{s-k-|\beta|},
    \qquad
    j,k\in\mathbb{N},\quad
    \alpha,\beta\in\mathbb{N}^{n-1}.
\end{equation}

The basic model is $\mathbb{R}^n$, with standard coordinates
$(x,\xi)$ on $T^*\mathbb{R}^n$. Writing
\[
    x=(r,y),\qquad
    \rho=r^{-1},\qquad
    \xi=(\tau,\rho^{-1}\mu),
\]
we set
\[
    \langle x\rangle=(1+|x|^2)^{1/2},
    \qquad
    \langle\xi\rangle=(1+|\xi|^2)^{1/2}.
\]
One sees that the estimates (\ref{symbest}) are equivalent to 
\[
    \left|
        \partial_x^\alpha\partial_\xi^\beta a(x,\xi)
    \right|
    \leq
    C_{\alpha\beta}
    \langle x\rangle^{\ell-|\alpha|}
    \langle\xi\rangle^{s-|\beta|},
    \qquad
    \alpha,\beta\in\mathbb{N}^n.
\]
 The class of \emph{scattering pseudodifferential operators}, denoted by $\Psi_{\mathrm{sc}}^{s,\ell}(M)$, is obtained from symbols $
    a\in S_{\mathrm{sc}}^{s,\ell}(T^*M)$
by reducing locally to the standard quantization of symbols on
$T^*\mathbb{R}^n$. This construction requires a partition of unity
$\{\psi_i\}_i$ subordinate to a finite coordinate covering of $M$, together
with suitable coordinate identifications near $\partial \,\overbar{M}$. Formally, it reads 
$$\Psi_{\mathrm{sc}}^{s,\ell}(M) := \mathrm{Op}\left( S_{\mathrm{sc}}^{s,\ell}(T^*M)\right) + \Psi_{\mathrm{sc}}^{-\infty,-\infty}(M),$$
where $\mathrm{Op}$ is a quantization map. The class of regularizing operators $\Psi_{\mathrm{sc}}^{-\infty,-\infty}(M)$ consists of pseudodifferential operators whose Schwartz kernels $K_A(x,x')$ are smooth and rapidly decreasing, together with all their derivatives, as either variable tends to infinity.
Different choices lead to the same operator modulo $
    \Psi_{\mathrm{sc}}^{s-1,\ell-1}(M).$

\bigskip

\smallskip

In the scattering calculus, the principal symbol of
$A\in\Psi_{\mathrm{sc}}^{s,\ell}(M)$ is the equivalence class of its symbol
in the quotient
\[
    S_{\mathrm{sc}}^{s,\ell}(T^*M)
    \big/
    S_{\mathrm{sc}}^{s-1,\ell-1}(T^*M).
\]
It is often useful to restrict attention to the smaller class of
\emph{classical scattering pseudodifferential operators}, obtained from
classical symbols. The subclass of classical symbols, characterized by having a joint one-step polyhomogeneous expansion in
powers of $\langle x\rangle$ and $\langle \xi\rangle$ at infinity, can be invariantly defined as $ \rho^{-\ell}\rho_\infty^{-s} C^\infty\bigl(\overline{{}^{\mathrm{sc}}T^*M}\bigr).$ 
In particular, if $A\in\Psi_{\mathrm{sc}}^{0,0}(M)$ is classical, its
principal symbol may be identified with the restriction of
$a$ to 
$    \partial\overline{{}^{\mathrm{sc}}T^*M}$.
For operators of arbitrary order, one similarly obtains a natural
identification of the principal symbol with a suitably rescaled function
on $\partial \overline{{}^{\mathrm{sc}}T^*M}$.

The \emph{microsupport} $\operatorname{WF}'_{\mathrm{sc}}(A)$ of
$A\in\Psi_{\mathrm{sc}}^{s,\ell}(M)$ is the complement of the set of
points $
    q\in\partial\overline{{}^{\mathrm{sc}}T^*M}$
for which, in a neighborhood of $q$, the full symbol of $A$ belongs to $
    S_{\mathrm{sc}}^{-N,-L}(T^*M)$ for every $N,L\in\mathbb{R}$. If $A$ is classical, its elliptic set is
denoted by $\operatorname{Ell}_{\mathrm{sc}}(A)$, and its characteristic
set by $\overline{\Sigma}_{\mathrm{sc}}(A)$, defined as the closure of the zero set of the principal symbol,   so that
\[
    \operatorname{Ell}_{\mathrm{sc}}(A)
    =
    \partial\overline{{}^{\mathrm{sc}}T^*M}
    \setminus
    \overline{\Sigma}_{\mathrm{sc}}(A).
\]

It is useful to allow the weight order
$\ell$ to vary on $\overline{{}^{\mathrm{sc}}T^*M}$. Besides the natural modifications in the definitions of
$S_{\mathrm{sc}}^{s,\ell}(T^*M)$ and
$\Psi_{\mathrm{sc}}^{s,\ell}(M)$, one slightly weakens the symbol estimates.
More precisely, one requires that, for some $\delta>0$,
\begin{equation*}
    \left|
        (\rho\partial_\rho)^j
        \partial_y^\alpha
        \partial_\tau^k
        \partial_\mu^\beta
        a(\rho,y,\tau,\mu)
    \right|
    \leq
    C_{jk\alpha\beta}\,
    \rho^{-\ell-\delta(j+k+|\alpha|+|\beta|)}
    \langle\xi\rangle^{s-k-|\beta|}.
\end{equation*}
This small loss avoids the logarithmic factors that would otherwise arise
when differentiating a variable order $\ell$. Apart from this point, the
modification has little practical effect, and we suppress it in the
notation.

\emph{Scattering Sobolev spaces.} Now let $s\geq0$ and $
    \ell\in
    C^\infty\!\left(
        \overline{{}^{\mathrm{sc}}T^*M}
    \right)$.
The \emph{weighted scattering Sobolev space of variable weight order} is
defined by
\[
    H_{\mathrm{sc}}^{s,\ell}(M)
    =
    \left\{
        u\in L^2(\, \Mb)
        \ \vert \
        Au\in L^2(\, \Mb)
    \right\},
\]
where $A\in\Psi_{\mathrm{sc}}^{s,\ell}(M)$ is any classical elliptic operator, that is,
\[
    \operatorname{Ell}_{\mathrm{sc}}(A)
    =
    \partial\overline{{}^{\mathrm{sc}}T^*M}.
\]
Different choices of $A$ give equivalent norms. In particular, one may choose $A$ invertible and set
\[
    \|u\|_{s,\ell}
    :=
    \|Au\|_{L^2}.
\]
For a positive integer $s$, this agrees with the usual definition of
weighted scattering Sobolev spaces. For $s\leq0$, one may define
$H_{\mathrm{sc}}^{s,\ell}(M)$ by duality from
$H_{\mathrm{sc}}^{-s,-\ell}(M)$.

Thus, for arbitrary $s,\ell\in\mathbb{R}$ and any elliptic
$A\in\Psi_{\mathrm{sc}}^{s,\ell}(M)$, one may equivalently write
\[
    H_{\mathrm{sc}}^{s,\ell}(M)
    =
    \Bigl\{
        u\in
        \bigcup_{s',\ell'}
        H_{\mathrm{sc}}^{s',\ell'}(M)
        \ \vert \
        Au\in L^2(\, \Mb)
    \Bigr\}.
\]

\emph{Hamiltonian mechanics}. If $
    A \in \Psi_{\mathrm{sc}}^{m,\ell}(M),
    B \in \Psi_{\mathrm{sc}}^{m',\ell'}(M),$
and $a$, respectively $b$, denotes the principal symbol of $A$, respectively $B$, then the principal symbol of $i[A,B]$ is the Poisson
bracket $\{a,b\}$ associated with the symplectic structure on $T^*M$. 

In the case of $\mathbb{R}^n$, this is given by the familiar
formula
\[
    \{a,b\}
    =
    \sum_{j=1}^n
    \left(
        (\partial_{x_j}a)(\partial_{\xi_j}b)
        -
        (\partial_{\xi_j}a)(\partial_{x_j}b)
    \right)
    =: H_a b,
\]
which defines the Hamilton vector field $H_a$. In the fiber coordinates $
    (\tau,\mu_1,\ldots,\mu_{n-1})$
introduced above, a straightforward change of variables gives
\[
    H_a
    =
    \rho
    \left(
        (\partial_\tau a)
        (\rho\partial_\rho+\mu\cdot\partial_\mu)
        -
        (\rho\partial_\rho+\mu\cdot\partial_\mu)a\,\partial_\tau
        +
        (\partial_\mu a)\cdot\partial_y
        -
        (\partial_y a)\cdot\partial_\mu
    \right).
\]
In view of the homogeneity with respect to $\rho$ and $\rho_\infty$, it is
natural to introduce the rescaled Hamilton vector field
\[
    \overbar{H}_a
    :=
    \rho^{\ell-1}\rho_\infty^{m-1}H_a.
\]
If $a$ is classical, then $\, \overbar{H}_a$ extends smoothly to the
compactified scattering cotangent bundle as a b-vector field,
\[
    \overbar{H}_a
    \in
    \pazocal{V}_{\mathrm b}
    \bigl(\overline{{}^{\mathrm{sc}}T^*M}\bigr),
\]
and therefore generates a flow on
$\overline{{}^{\mathrm{sc}}T^*M}$, called the \textit{Hamilton flow}. For a general symbol $a \in S^{m,\ell}\bigl({}^{\mathrm{sc}}T^*M\bigr)$,
the vector field $\, \overbar{H}_a$ has only conormal coefficients of order
$(0,0)$, while remaining tangent to the boundary
    $\partial\,\overline{{}^{\mathrm{sc}}T^*M}$. \textit{Radial points} are the ones in $\partial\,\overline{{}^{\mathrm{sc}}T^*M}$ at which $\, \overbar{H}_a$ vanishes as a vector in $T\overline{{}^{\mathrm{sc}}T^*M}$.

\subsection{Operators on Hermitian vector bundles}

Let \(E,F\to \, \overbar M\) be complex vector bundles. The spaces $H_{\mathrm{loc}}^s(M;E)$ and $H_{\mathrm{c}}^s(M;E)$  denote the standard spaces on the \emph{boundaryless} manifold $M$, namely the sections of $E$ with Sobolev regularity $s$ and the compactly supported Sobolev sections of order $s$ respectively. The space $\pazocal{D}'(M;E)$ denotes the distributional sections of $E$ and $C_{\mathrm{c}}^{\infty}(M;E)$ is the space of smooth sections with compact support on $M$. 

Let $
    \pi_L,\pi_R:M\times M\longrightarrow M$
be the left and right projections. We denote $
    F\boxtimes E^*
    :=
    \pi_L^*F\otimes \pi_R^*E^*$, where $$
    (F\boxtimes E^*)_{(x,y)}
    =
    F_x\otimes E_y^*
    \simeq \operatorname{Hom}(E_y,F_x).$$
Consequently, the Schwartz kernel of an operator $
    A:C_c^\infty(M;E)\longrightarrow \pazocal D'(M;F)$
is a distributional section $
    K_A\in \pazocal D'(M\times M;F\boxtimes E^*)$.
We denote by $$
    \iota_\Delta:M\to M\times M,
    \qquad
    \iota_\Delta(x)=(x,x)$$ the diagonal map, where $\Delta$ is the diagonal in $M \times M$. Whenever the diagonal pullback is defined, one has
\begin{equation} \label{relpullbackdiag}
    \iota_\Delta^*K_A
    \in
    \pazocal D'(M;F\otimes E^*)
    \simeq
    \pazocal D'(M;\operatorname{Hom}(E,F)).
\end{equation}

Now, let $
\pi:{}^{\mathrm{sc}}T^* \, \overbar M \rightarrow \, \overbar M $ be the projection. For \(m,\ell\in\mathbb R\), we define the space of scattering symbols $S^{m,\ell}\left({}^{\mathrm{sc}}T^* \, \overbar{M};
\pi^* \operatorname{Hom}(E,F)\right)$, that we will denote by
$$S_{\mathrm{sc}}^{m,\ell}\left(T^*M;
\operatorname{Hom}(E,F)
\right),$$
as the space of symbols which verify in local scattering coordinates
\((\rho,y;\tau,\mu)\) and local trivializations of \(E\) and \(F\) the estimates (\ref{symbest}), i.e.
$$
\left|
(\rho\partial_\rho)^j
\partial_y^\alpha
\partial_\tau^k
\partial_\mu^\beta a
\right|
\leq
C_{j\alpha k\beta}\,
\rho^{-\ell}
\langle(\tau,\mu)\rangle^{m-k-|\beta|}.$$
Thus, for every \((x,\xi)\in{}^{\mathrm{sc}}T^* \, \overbar M\),
\[
a(x,\xi)\in\operatorname{Hom}(E_x,F_x).
\]
The classical symbol class is the subspace
\[
S_{\mathrm{cl}}^{m,\ell}\left( T^* M;
\operatorname{Hom}(E,F)
\right)
:=
\rho^{-\ell}\rho_\infty^{-m}
C^\infty\left(
\overline{{}^{\mathrm{sc}}T^* M};
\pi^*\operatorname{Hom}(E,F)
\right).
\]

On a trivializing neighborhood \(U\subset \, \overbar M\), a bundle-valued
symbol is represented by a matrix
\[
a=(a_{\alpha\beta}),
\qquad
a_{\alpha\beta}\in
S_{\mathrm{sc}}^{m,\ell}(T^*U),
\]
and is quantized entrywise, i.e. $
\operatorname{Op}(a)
=
\bigl(\operatorname{Op}(a_{\alpha\beta})\bigr)$.
Patching these local quantizations by means of bundle trivializations
and a partition of unity defines 
\[
\Psi_{\mathrm{sc}}^{m,\ell}(M;E,F).
\]
For $E = F$, this space will be denoted by $\Psi_{\mathrm{sc}}^{m,\ell}(M;E)$. 

We say that $A \in \Psi_{\mathrm{sc}}^{m,\ell}(M;E)$ is elliptic at $q = (x;\xi) \in\partial\overline{{}^{\mathrm{sc}}T^*M}$ if its principal symbol at $q$,
$$\sigma_m(A)(q) : E_x \rightarrow E_x,$$
is an isomorphism. All such elliptic points $q \in\partial\overline{{}^{\mathrm{sc}}T^*M}$ define the elliptic set $\mathrm{Ell}_{\mathrm{sc}}(A)$. The characteristic set $\overline{\Sigma}_{\mathrm{sc}}(A)$ is the complement $\overline{\Sigma}_{\mathrm{sc}}(A)
    =
    \partial\overline{{}^{\mathrm{sc}}T^*M}
    \setminus
    \operatorname{Ell}_{\mathrm{sc}}(A)$.

We now define \emph{scattering connections} on $E \to \, \overbar{M}$ as covariant derivatives 
$$\nabla : C^{\infty}(\, \Mb;E) \rightarrow C^{\infty}(\, \Mb;{}^{\mathrm{sc}}T^*\, \Mb \otimes E)$$ which satisfy the Leibniz condition $\nabla(f\cdot s) = df \otimes s + f \cdot \nabla s$, where $f \in C^{\infty}(\, \Mb)$ and $s \in C^{\infty}(\, \Mb;E)$. Note that $df$ refers to the image of the one-form in $C^{\infty}(\, \Mb;{}^{\mathrm{sc}}T^*\, \Mb)$ via the natural bundle map $T^*\, \Mb \to {}^{\mathrm{sc}}T^*\, \Mb$, induced by the inclusion $\pazocal{V}_{\mathrm{sc}}(\, \Mb) \subset \pazocal{V}(\, \Mb)$.

The algebra of scattering differential operators $\mathrm{Diff}_{\mathrm{sc}}^*\left(M,E\right)$ is the universal enveloping algebra of $\pazocal{V}_{\mathrm{sc}}(\, \overbar{M}) \otimes C^\infty \left(\, \overbar{M};
\operatorname{End}(E) \right) $ over $C^\infty(\, \overbar{M})$. That is, for \(m\in\mathbb N_0\),
$$\operatorname{Diff}_{\mathrm{sc}}^m
(M;E) = \Bigl\{\sum_{0 \leq \ell \leq m } c_{\ell} \nabla_{V_1} \cdots \nabla_{V_{\ell}}  \ \vert \ V_i \in \pazocal{V}_{\mathrm{sc}}(\, \Mb), c_{\ell} \in C^\infty \left(\, \overbar{M};
\operatorname{End}(E) \right) \Bigr\}.$$
For \(\ell\in\mathbb R\), we set
\[
\operatorname{Diff}_{\mathrm{sc}}^{m,\ell}
(M;E)
:=
\rho^{-\ell}
\operatorname{Diff}_{\mathrm{sc}}^m
(M;E) \subset \Psi_{\mathrm{sc}}^{m,\ell}(M;E).
\]
Assume now that the vector bundle $E$ is equipped with a positive-definite Hermitian metric. Together with the chosen positive sc-density, this defines
the Hilbert space \(L^2(\, \Mb;E)\).
Let $s\geq0$, $
    \ell\in
    C^\infty\!\left(
        \overline{{}^{\mathrm{sc}}T^*M}
    \right)$ and choose an elliptic invertible operator $
    A \in \Psi_{\mathrm{sc}}^{s,\ell}(M;E)$, with which we define
\[
    H_{\mathrm{sc}}^{s,\ell}(M;E)
    :=
    \left\{
        u\in L^2(\, \Mb;E)
         \ \vert \ 
        Au\in L^2(\, \Mb;E)
    \right\}.
\]
Equivalently, after choosing any scattering connection on \(E\), the same
spaces may be described for integer \(s\ge0\) by requiring all covariant
derivatives along sc-vector fields up to order \(s\) to lie in \(L^2\), with
the corresponding weight. Different choices of scattering connection give
equivalent norms, since two such connections differ by a smooth
\(\operatorname{End}(E)\)-valued scattering one-form.

The rest of the above scalar discussion follows in a similar way in the bundle setting. Note that the Hamilton vector field $H_a$ is well-defined as $H_a := H_{p_a}$ for an operator $A \in \Psi_{\mathrm{sc}}^{m,\ell}(M;E)$ that is principally scalar, i.e. its principal symbol $a$ is given by $$\sigma_m(A)(x;\xi) = p_a(x;\xi) \mathrm{Id}_{E_x}.$$

\subsection{The uniform operator wavefront set} \label{sectuniwf}
We recall the notion of uniform operator wavefront set used to characterize the wavefront set of the resolvent $(P-z)^{-1}$ in \cite[\S~3.3]{dang2024diracoperatorslocalinvariants}. We again assume that $E$ is a Hermitian bundle over $\, \Mb$, here considered over the interior $M$.

Let $Z \subset \mathbb{C}$ and let $h$ be a strictly positive function on $Z$. We will mostly consider $h$ of the form $h(z) = \langle z \rangle^{-p}$ where $p \in \N$. Suppose $G_z : C_{\mathrm{c}}^\infty(M;E) \to \Dd'(M;E)$ for $z \in Z$.

\begin{definition}
For $s \in \mathbb{R}$, we write
\[
G_z = \pazocal{O}_{H^{*} \to H^{*+s}}(h(z))
\]
if for all $\ell \in \mathbb{R}$, $h(z)^{-1} G_z$ is a uniformly bounded
family of continuous operators
$H_{\mathrm{c}}^{\ell}(M;E) \to H^{s+\ell}_{\mathrm{loc}}(M;E)$.
We write
$G_z = \pazocal{O}_{C^\infty \to C^\infty}(h(z))$
if
$G_z = \pazocal{O}_{H^{*} \to H^{*+s}}(h(z))$
for some $s \in \mathbb{R}$.
\end{definition}

The statement $G_z = \pazocal{O}_{C^\infty \to C^\infty}(\langle z \rangle^{-\infty})$ used throughout the paper will mean that for every $L\geq 0$, the family $G_z$ satisfies $G_z = \pazocal{O}_{C^\infty \to C^\infty}(\langle z \rangle^{- L})$.

Over the interior $M$ of $\, \overbar{M}$,
$\partial \overline{{}^{\mathrm{sc}}T^*}M$ is isomorphic to
$\partial \,\overline{T}^*M$, the boundary of the fiber compactification
of $T^*M$, that we will denote by $S^*M$. Let us denote by $\Psi^m(M;E)$ the class of properly supported
pseudodifferential operators of order $\leq m$ on $M$ in the sense of
the usual calculus on the boundaryless manifold $M$.
We say that $A \in \Psi^m(M;E)$ is elliptic at
$q = (x;\xi) \in S^*M$ if its principal symbol is invertible as an element of
\(\operatorname{End}(E_x)\).

\begin{definition} \label{defwf}
If
$G_z = \pazocal{O}_{C^\infty \to C^\infty}(h(z))$,
then its \emph{uniform operator wavefront set of order $s \in \mathbb{R}$}
is the set
\[
\operatorname{WF}'^{(s)}_{h(z)}(G_z)
\subset
S^*M \times S^*M
\]
defined as follows:
$(q_1,q_2) \notin \operatorname{WF}'^{(s)}_{h(z)}(G_z)$
if there exist $C_i \in \Psi^0(M;E)$, elliptic at $q_i$ $(i=1,2)$,
and such that
\[
C_1 G_z C_2^*
=
\pazocal{O}_{H^{*} \to H^{*+s}}(h(z)).
\]
\end{definition}

We will denote by $\operatorname{WF}'^{(s)}(G_z)$ the wavefront $\operatorname{WF}'^{(s)}_{h(z)}(G_z)$ where $h(z) = 1$ for all $z \in Z$.

\begin{lemma} \label{lemcompo}
Let
$G_{1,z} = \pazocal{O}_{C^\infty \to C^\infty}(h_1(z))$
and
$G_{2,z} = \pazocal{O}_{C^\infty \to C^\infty}(h_2(z))$,
and suppose that the operators $G_{1,z},G_{2,z}$ are properly supported
for all $z \in Z$. Then the composition
$$
G_{1,z} \circ G_{2,z}
=
\pazocal{O}_{C^\infty \to C^\infty}(h_1h_2(z))$$
is well-defined and satisfies
\[
\operatorname{WF}'^{(s)}_{h_1h_2(z)}
    (G_{1,z}G_{2,z})
\subset
\operatorname{WF}'^{(s)}_{h_1(z)}(G_{1,z})
\circ
\operatorname{WF}'^{(s)}_{h_2(z)}(G_{2,z}),
\]
where the composition of
$\Gamma_1,\Gamma_2
\subset
S^*M \times S^*M$
is defined by
\[
\Gamma_1 \circ \Gamma_2
=
\left\{
(q_1,q_2)
\in
S^*M \times S^*M
\;\middle|\;
\exists\, q_0 \in S^*M,\,
(q_1,q_0) \in \Gamma_1,\,
(q_0,q_2) \in \Gamma_2
\right\}.
\]
\end{lemma}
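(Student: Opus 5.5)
We prove Lemma~\ref{lemcompo} in two steps. First we show that the composition is well defined and bounded of the right size. Then we prove the wavefront inclusion by a microlocal partition of unity on the middle factor.

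\emph{Well-definedness and the size bound.} Since $G_{1,z},G_{2,z}$ are properly supported, $G_{2,z}$ maps $H^\ell_{\mathrm c}(M;E)$ into $H^{\ell+s_2}_{\mathrm c}(M;E)$, where $s_2$ is the order for which $G_{2,z}=\pazocal O_{H^*\to H^{*+s_2}}(h_2(z))$. More precisely, for each compact $K$, supports of images of sections supported in $K$ stay in a fixed compact $K'$ independent of $z$. We will take as given that this is part of what ``properly supported for all $z$'' means here, namely a uniform properness in $z$. Inserting a cutoff $\chi\in C^\infty_{\mathrm c}(M)$ equal to $1$ near $K'$, we write $G_{1,z}G_{2,z}=G_{1,z}\chi G_{2,z}$ on sections supported in $K$. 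Then $h_1(z)^{-1}h_2(z)^{-1}G_{1,z}G_{2,z}$ is uniformly bounded $H^\ell_{\mathrm c}\to H^{\ell+s_1+s_2}_{\mathrm{loc}}$. This gives $G_{1,z}G_{2,z}=\pazocal O_{C^\infty\to C^\infty}(h_1h_2(z))$.

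\emph{Wavefront inclusion.} Let $(q_1,q_2)\notin \Gamma_1\circ\Gamma_2$, where $\Gamma_i=\operatorname{WF}'^{(s)}_{h_i(z)}(G_{i,z})$. We choose small conic neighbourhoods $U_1\ni q_1$, $U_2\ni q_2$ in $S^*M$ so that the following holds: for every $q_0$ in the relevant fibres (over the compact set $K'$ above), either $(U_1\times\{q_0\})\cap\Gamma_1=\varnothing$ or $(\{q_0\}\times U_2)\cap\Gamma_2=\varnothing$. Such neighbourhoods exist because the $\Gamma_i$ are closed; this follows directly from Definition~\ref{defwf}, since the elliptic set is open. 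Compactness of $S^*K'$ then lets us cover it by finitely many open sets $V_j$ with this property. We take a pseudodifferential partition of unity $\sum_j B_j^*B_j=\chi+R$, with $B_j\in\Psi^0$ microsupported in $V_j$ and $R$ smoothing and properly supported.

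Take $C_i$ elliptic at $q_i$ with microsupport in $U_i$. Then
\[
C_1G_{1,z}G_{2,z}C_2^*=\sum_j (C_1G_{1,z}B_j^*)(B_jG_{2,z}C_2^*)+C_1G_{1,z}RG_{2,z}C_2^*.
\]
For each $j$, one of the two factors is $\pazocal O_{H^*\to H^{*+s}}(h_i(z))$. This uses the standard fact that $(q,q')\notin \operatorname{WF}'$ holds uniformly for all operators microsupported near $q,q'$, not only for the particular elliptic ones in the definition: by an elliptic parametrix, $B=QC+R'$ microlocally. The other factor is $\pazocal O_{H^*\to H^{*+s'}}(h_{i'}(z))$ for some $s'$. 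The term containing $R$ is harmless since $R$ gains infinitely many derivatives.

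\emph{Main obstacle.} The difficulty is the order bookkeeping. One factor gains $s$, the other only some unrelated $s'$, so naively the product gains $s+s'$, not $s$. We would handle this by reading the definition as the paper does: $\operatorname{WF}'^{(s)}$ is meant modulo the global orders, or $s$ absorbs the orders of the other operator. The cleanest fix is to interpret the inclusion at the order where the product estimate closes, i.e.\ to track $s$ as the combined order in the composed family, as in \cite[\S~3.3]{dang2024diracoperatorslocalinvariants}. We would follow that reference's convention precisely at this point.
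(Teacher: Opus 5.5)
Your first two steps follow the standard route: uniform properness gives the size bound, $\operatorname{WF}'^{(s)}$ is closed because elliptic sets are open, $S^*K'$ is compact, and you use a microlocal partition of unity in the middle variable. The paper gives no argument of its own; it only cites \cite[Lem.~3.7]{Dang:2020jch}.

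The step you flag as the ``main obstacle'' is a genuine gap, and no choice of convention closes it. With the paper's definitions, the inclusion with the same $s$ on both sides is false as soon as one factor has negative global gain. Take $h_1=h_2=1$, $G_{1,z}=\Id$, $G_{2,z}=A$ a fixed properly supported elliptic element of $\Psi^m(M;E)$ with $m>0$, and $s=0$.
\begin{itemize}
\item The witnesses $C_1=C_2=\Id$ show $\operatorname{WF}'^{(0)}(\Id)=\varnothing$, so the right-hand side is empty.
\item For any $C_1,C_2\in\Psi^0(M;E)$ elliptic at $q$, the operator $C_1AC_2^*$ is elliptic of order $m>0$ at $q$. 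By microlocal elliptic regularity it is not $\pazocal{O}_{H^*\to H^{*}}(1)$, so $(q,q)\in\operatorname{WF}'^{(0)}(G_{1,z}G_{2,z})$.
\end{itemize}
Deferring to ``the reference's convention'' therefore cannot rescue the stated inclusion.

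What your partition argument does prove, without change, is the order-shifted inclusion. If $G_{i,z}=\pazocal{O}_{H^*\to H^{*+m_i}}(h_i(z))$, then
\[
\operatorname{WF}'^{(s)}_{h_1h_2(z)}(G_{1,z}G_{2,z})\subset \operatorname{WF}'^{(s-m_2)}_{h_1(z)}(G_{1,z})\circ \operatorname{WF}'^{(s-m_1)}_{h_2(z)}(G_{2,z}).
\]
In the $j$-th term you only require the microlocally good factor to gain $s-m_2$ (resp.\ $s-m_1$). The other factor then supplies its global gain $m_2$ (resp.\ $m_1$), so the product gains $s$.

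Since $\operatorname{WF}'^{(s)}$ is non-decreasing in $s$, this gives two things:
\begin{itemize}
\item the lemma exactly as stated, under the extra hypothesis $m_1,m_2\geq 0$;
\item everything the paper actually uses in Theorem~\ref{thmRz}. There $\operatorname{WF}'^{(s)}(R(z))\subset\Lambda'_{\mathrm F}$ and $\operatorname{WF}'^{(s)}(r_N(z))\subset\Lambda'_\chi$ hold for every $s$, so the shift is invisible whatever the global orders of $R(z)$ and $r_N(z)$.
\end{itemize}
You should state and prove this shifted version, or add the hypothesis $m_i\ge 0$, in place of your last paragraph.

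One smaller point: choose $U_1$, $U_2$ and the $V_j$ inside the elliptic sets of finitely many witness pairs from Definition~\ref{defwf}, not merely disjoint from $\Gamma_i$. That is what makes your ``uniformly for all operators microsupported near $q,q'$'' step legitimate.
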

The proof is given by \cite[Lem.~3.7]{Dang:2020jch}. Recall that for $A \in \Psi^s(M;E)$, there is a notion of wavefront set $\wf'(A)$ that characterizes the codirections $q \in S^*M$ in which, microlocally, $A$ does not behave like a regularizing operator.

\begin{definition}
Let $\Lambda \subset T^*M \setminus o$ be conic.
Let $\{u_z\}_{z\in Z}$ be a family of distributions on $M$.
We write
\[
u_z = \pazocal{O}_{\Dd'_\Lambda}(h(z))
\]
if for all $A \in \Psi^0(M;E)$ satisfying
$\operatorname{WF}'(A) \cap \Lambda = \emptyset$
we have
\[
A u_z = \pazocal{O}_{C^\infty}(h(z)).
\]
\end{definition}

Finally, we define the \emph{primed relation} for subsets of $T^*(M\times M)\setminus o$.

\begin{definition} \label{defprimerel}
Let $\nu : T^*M \setminus o \to S^*M$ be the quotient map for the $\mathbb{R}_+$ action by fiberwise dilations. For each conic set $\Lambda \subset T^*(M \times M) \setminus o$, we define
\[
\Lambda' = \{(\nu(x_1;\xi_1), \nu(x_2; - \xi_2)) \mid (x_1,x_2; \xi_1, \xi_2) \in \Lambda, \ \xi_1 \neq 0, \ \xi_2 \neq 0 \},
\]
which is a subset of $S^*M \times S^*M$.
\end{definition}

\begin{lemma} \label{lemusforrN}
Suppose $\Lambda \subset T^*(M \times M) \setminus o$ is conic and $G_z = \pazocal{O}_{C^\infty \to C^\infty}(h(z))$. If the associated family of Schwartz kernels satisfies $G_z(\cdot) = \pazocal{O}_{\pazocal{D}'_\Lambda}(h(z))$, then $ \operatorname{WF}'^{(s)}_{h(z)}(G_z) \subset \Lambda'$
for all $s \in \mathbb{R}$.
\end{lemma}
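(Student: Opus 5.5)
To prove Lemma \ref{lemusforrN}, we fix $(q_1,q_2)\in S^*M\times S^*M$ with $(q_1,q_2)\notin\Lambda'$ and produce $C_1,C_2\in\Psi^0(M;E)$, elliptic at $q_1,q_2$ respectively, such that $C_1G_zC_2^*=\pazocal{O}_{H^*\to H^{*+s}}(h(z))$ for every $s$. Write $q_1=\nu(x_1;\xi_1)$ and $q_2=\nu(x_2;\xi_2)$. The key observation is that the Schwartz kernel of $C_1G_zC_2^*$ equals $(C_1\otimes \overline{C_2})K_{G_z}$, up to the bundle identifications $F\boxtimes E^*$. Here $\overline{C_2}$ acts in the second variable with symbol $\sigma(C_2)(x,-\xi)$ transposed/conjugated, which is the source of the sign flip $\xi_2\mapsto-\xi_2$ in Definition \ref{defprimerel}. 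The operator $C_1\otimes\overline{C_2}$ is not a genuine pseudodifferential operator on $M\times M$, since its symbol $c_1(x_1,\xi_1)\overline{c_2}(x_2,-\xi_2)$ fails the symbol estimates near $\xi_1=0$ or $\xi_2=0$. We therefore split it with a conic partition.

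\textbf{Step 1: choice of $C_1,C_2$.} Since $\Lambda$ is conic and (we may assume) closed, the condition $(q_1,q_2)\notin\Lambda'$ yields conic neighborhoods $\Gamma_1\ni(x_1;\xi_1)$ and $\Gamma_2\ni(x_2;\xi_2)$ such that no point $(y_1,y_2;\eta_1,\eta_2)\in\Lambda$ with $\eta_1,\eta_2\ne0$ has $(y_1;\eta_1)\in\Gamma_1$ and $(y_2;-\eta_2)\in\Gamma_2$. If $\Lambda$ is not closed, we replace it by its closure in $T^*(M\times M)\setminus o$, and note that the hypothesis still holds. We then take $C_i$ properly supported, of order $0$, elliptic at $q_i$, with compactly supported Schwartz kernels and $\wf'(C_i)\subset\Gamma_i$.

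\textbf{Step 2: microlocal decomposition.} Let $\chi(\eta_1,\eta_2)$ be a conic cutoff on $T^*(M\times M)$ equal to $1$ where $|\eta_1|\le\delta|\eta_2|$ or $|\eta_2|\le\delta|\eta_1|$, and set $A=(C_1\otimes\overline{C_2})\,\mathrm{Op}(1-\chi)$. Then $A\in\Psi^0(M\times M)$, because on the support of $1-\chi$ the two fiber variables are comparable and the product symbol is a genuine order-$0$ symbol. Moreover $\wf'(A)$ is contained in $\{(y_1,y_2;\eta_1,\eta_2)\,:\,(y_1;\eta_1)\in\Gamma_1,\ (y_2;-\eta_2)\in\Gamma_2\}$, which is disjoint from $\Lambda$ by Step 1. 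The hypothesis $K_{G_z}=\pazocal{O}_{\Dd'_\Lambda}(h(z))$ then gives $AK_{G_z}=\pazocal{O}_{C^\infty}(h(z))$, and a smooth kernel that is uniformly $\pazocal{O}(h)$ in $C^\infty$ defines an operator that is $\pazocal{O}_{H^*\to H^{*+s}}(h(z))$ for all $s$.

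\textbf{Step 3: the remaining piece (main obstacle).} The term $(C_1\otimes\overline{C_2})\,\mathrm{Op}(\chi)K_{G_z}$ must still be controlled. In the region $|\eta_1|\le\delta|\eta_2|$ the factor $C_1$ contributes nothing, but the order-$0$ factor $\overline{C_2}$ is still applied at high frequency. The plan is to use the a priori bound $G_z=\pazocal{O}_{H^*\to H^{*+s_0}}(h(z))$ for some $s_0$, together with the frequency separation. This piece acts like an operator $H^\ell\to H^{\ell+s_0}$ in which the output frequency $|\eta_1|$ is much smaller than the input frequency $|\eta_2|$. Inserting the weight $\langle D\rangle^{N}$ on the output and $\langle D\rangle^{-N}$ on the input, the separation allows the orders to be traded freely, so this piece is $\pazocal{O}_{H^*\to H^{*+s}}(h)$ for every $s$. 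The symmetric region $|\eta_2|\le\delta|\eta_1|$ is handled the same way. Making this order-trading rigorous and uniform in $z$ is the delicate step. If it becomes awkward, the fallback is to cite the standard argument from \cite[Lem.~3.7]{Dang:2020jch}, which performs the same decomposition in the scalar case, and to observe that the bundle case follows by working in local trivializations entrywise.
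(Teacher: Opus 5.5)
\textbf{There is a genuine gap, in exactly the step you flagged (Step~3).}

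\emph{Why order trading fails.} On the conic region $|\eta_1|\le\delta|\eta_2|$ the a~priori bound cannot be upgraded by trading orders. Take dyadic pieces at output frequency $2^j$ and input frequency $2^k$. The bound $G_z=\pazocal{O}_{H^*\to H^{*+s_0}}(h(z))$ gives
$\|P_jG_zP_k\|_{L^2\to L^2}\lesssim_L h(z)\,2^{-js_0}2^{-(k-j)L}$ for every $L$.
On a conic region, however, $k-j$ is only bounded below by $\log_2(1/\delta)$. So the gain is the constant $\delta^L$, and it never beats the loss $2^{j(s-s_0)}$ as $j\to\infty$. Equivalently, $\langle D_1\rangle^{N}\langle D_2\rangle^{-N}$ is merely bounded by $\delta^N$ there. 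Order trading only works on the much thinner set $|\eta_1|\le|\eta_2|^{\theta}$ with $\theta<1$.

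\emph{The a~priori bound alone cannot repair this.} Sum pieces that carry frequency $2^k$ near $x_2$, by a dilation, to frequency $2^{k-m_k}$ near $x_1$, with $m_k=\lfloor k^{1/4}\rfloor$ and weights $2^{-\sqrt{k}}$. The result is an operator with the following properties:
it is bounded $H^\ell_{\mathrm c}\to H^\ell_{\mathrm{loc}}$ for every $\ell$;
its kernel has wavefront set contained in $\{\xi_1=0\}$;
yet $(\nu(x_1;\xi),\nu(x_2;\xi))\in\operatorname{WF}'^{(s)}$ for all $s>0$ and $\xi\neq0$.
Definition~\ref{defprimerel} discards points with a vanishing component, so $\Lambda'=\varnothing$ for this kernel and the conclusion fails. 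The statement therefore needs more than $(q_1,q_2)\notin\Lambda'$.

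\emph{Step~1 has the same problem.} When points of $\Lambda$ accumulate at $\{\xi_1=0\}\cup\{\xi_2=0\}$, $\Lambda'$ need not be closed. Then $(q_1,q_2)\notin\Lambda'$ does not produce $\Gamma_1,\Gamma_2$ valid for all ratios $|\eta_1|/|\eta_2|$.

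\textbf{The missing idea and the fix.} The axis cones must be controlled by the hypothesis $G_z(\cdot)=\pazocal{O}_{\pazocal{D}'_\Lambda}(h(z))$ itself, not by the a~priori bound. This requires an extra assumption, which the example shows is necessary: $\Lambda$ stays a positive angle away from $\{\xi_1=0\}\cup\{\xi_2=0\}$ over compact base sets. The assumption holds where the paper actually uses the lemma ($\Lambda_\chi\subset\Lambda_{\mathrm F}$, and the conormal of the diagonal). There the two covectors are related by the homogeneous Hamilton flow, or are opposite, so their lengths are comparable on compact sets.

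Under this assumption the argument goes through as follows.
\begin{itemize}
\item Choose $\delta$ so that the joint conic cutoff $\operatorname{Op}(\chi)\in\Psi^0(M\times M)$ satisfies $\operatorname{WF}'(\operatorname{Op}(\chi))\cap\Lambda=\varnothing$ near $(x_1,x_2)$.
\item The hypothesis then gives $\operatorname{Op}(\chi)K_{G_z}=\pazocal{O}_{C^\infty}(h(z))$ directly.
\item $C_1\otimes\overline{C_2}$ acts separately in each variable and preserves uniformly smooth kernels, so the remaining piece needs no order trading.
\item Step~1 is now only needed on the compact range $\delta\le|\eta_1|/|\eta_2|\le\delta^{-1}$, and under the angle condition $\Lambda'$ is locally closed.
\item Step~2 goes through as you wrote it: with $\chi$ a Fourier multiplier, $(C_1\otimes\overline{C_2})\operatorname{Op}(1-\chi)$ is exactly the quantization of the genuine symbol $c_1(x_1,\eta_1)\overline{c_2}(x_2,-\eta_2)(1-\chi)$.
\end{itemize}

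For comparison, the paper gives no argument and simply refers to \cite[Lem.~3.10]{Dang:2020jch}. Your fallback cites Lem.~3.7 of that paper, which is the composition result (Lemma~\ref{lemcompo} here), not this statement.
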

The proof is given by \cite[Lem.~3.10]{Dang:2020jch}. We close this section with the definition of the notation $\pazocal{O}_{C^s}$, used throughout the paper, where the space $C^s\left(U;E \boxtimes E^*\right)$ denotes the sections of the bundle $E \boxtimes E^*$ of $C^s$-regularity.

\begin{definition}
    Let $U \subset M \times M$ open, and let $s > 0$. We will write that $G_z = \pazocal{O}_{C^s(U)}(h(z))$
    if $G_z = \pazocal{O}_{C^\infty \to C^\infty}(h(z))$ and if the associated family of Schwartz kernels $h(z)^{-1}G_z$ is uniformly bounded in $C^s\left(U;E \boxtimes E^*\right)$.
    \end{definition}

\smallskip

\section{complex powers on pseudo-riemannian scattering spaces} \label{pseudo-R}

Let $(M,g)$ be an $n$-dimensional pseudo-Riemannian scattering space of signature $(\kappa, n - \kappa)$, where $\kappa \in \{0,1, \dots, n \}$, in the sense of Definition \ref{scpseudospace}. 

Consider a smooth complex vector bundle $\pi : E \rightarrow \, \Mb$. The scalar case corresponds to $E$ being trivial. We assume that it is endowed with a fiberwise positive-definite Hermitian metric $h$, which after integration on $M$ with the volume density $\mathrm{vol}_g$ defines a scalar product $\langle \cdot, \cdot \rangle_E$.
The corresponding Hilbert space will be denoted by $L^2(\, \Mb;E)$. Consider now a classical scattering differential operator 
$$P \in \mathrm{Diff}_{\mathrm{sc}}^{2,0} (M;E) \subset \Psi_{\mathrm{sc}}^{2,0} (M;E)$$
which is of \emph{Laplace-type}, in other words with real scalar principal symbol $p$ given by 
$$ \sigma_2(P)(x;\xi)= - g_x^{-1}(\xi,\xi) \mathrm{Id}_{E_x}.$$
For a scalar potential in the case where $E$ is trivial, one may keep in mind the main example $P = \Delta_g + V$, where $V \in C^{\infty}(\, \Mb)$ and where $\Delta_g$ denotes the Laplace--Beltrami operator in signature $(\kappa, n - \kappa)$.

Now, consider the family of operators
$P-\lambda \in \Psi_{\mathrm{sc}}^{2,0}(M;E)$, where $\lambda \in \mathbb{C}$ is in a closed (not necessarily bounded) subset
\[
    Z_\varepsilon^+
    \subset
    \{ z \in \mathbb{C} \mid \operatorname{Im}(z) \geq \varepsilon \}
\]
of the upper half-plane, separated from the real line by at least $\varepsilon > 0$. 

Recall that, by the Laplace-type assumption, the principal symbol of $P$ is real.
The principal symbol of $P-\lambda$ at base infinity $\partial {}^{\mathrm{sc}} T^* \, \overbar{M}$ is therefore $p-\lambda$, with $p$ real
and $\mathrm{Im}(\lambda) \geq \varepsilon > 0$, so $P-\lambda$ is elliptic at base infinity (away from the corner). On the other
hand, at fiber infinity ${}^{\mathrm{sc}}S^*M$, $\lambda$ does not enter the principal symbol,
so the characteristic set is
\[
    \overline{\Sigma}_{\mathrm{sc}}(P-\lambda)
    =
    \overline{\Sigma}_{\mathrm{sc}}(P)
    \cap {}^{\mathrm{sc}}S^*M
    =:
    \overline{\Sigma}_0.
\]
Here we consider the rescaled Hamilton vector field associated with the principal symbol of $P$, denoted by $\, \overbar{H}_p$. The \emph{null bicharacteristics} are its integral curves within the characteristic set $\overline{\Sigma}_0$. 

We will write $q \sim q'$ if $q$ and $q'$ are linked
by a bicharacteristic in $\overline{\Sigma}_0$. 
The notation $q \succ q'$ means that $q \sim q'$ and $q$ comes after $q'$ along the flow. More precisely, if $t \mapsto \Phi_t$ denotes the flow of the rescaled Hamilton vector field $\, \overbar{H}_p$ associated to $p$ in $\overline{\Sigma}_0$, for $q,q' \in {}^{\mathrm{sc}}S^*M$ we write
$$q \sim q' \ \ (\textnormal{resp.} \ q \prec q', \ \textnormal{or} \ q \succ q')$$
if $q,q' \in \overline{\Sigma}_0$ and $\Phi_t(q) = q'$ for some $t \in \R$ (resp. $t > 0$ or $t<0$).

We now describe the radial source/sink structure at infinity. The sets
\(L_-\) and \(L_+\) are invariant submanifolds of the compactified
characteristic set which control the asymptotic behavior of null
bicharacteristics. The functions \(\rho_\pm\), \(\rho\), and \(\rho_\infty\)
measure distance to the radial set, to base infinity, and to fiber infinity,
respectively. The conditions below express that, in the appropriate Hamilton flow
direction, the rescaled Hamilton flow is expanding away from a source and
contracting towards a sink. 

\begin{definition} \label{defsourcesink}
We say that $L_- \subset \partial {}^{\mathrm{sc}} T^* \, \overbar{M}$,
resp.\ $L_+ \subset \partial {}^{\mathrm{sc}} T^* \, \overbar{M}$, are
\emph{sources}, resp.\ \emph{sinks} at infinity, if $L_\pm$ is a smooth
manifold transversal to ${}^{\mathrm{sc}}S^*_{\partial M}M$ and within
$\overline{\Sigma}_0$:

\begin{enumerate}[label=\alph*)]
    \item $dp \neq 0$ on $L_\pm$ and $\, \overbar{H}_p$ is tangent to $L_\pm$;

    \item there exists a quadratic defining function $\rho_\pm$ of $L_\pm$
    and a smooth function $\beta_{L_\pm}>0$ satisfying
    \[
        \mp \, \overbar{H}_p \rho_\pm
        =
        \beta_{L_\pm}\rho_\pm+s_\pm+r_\pm
    \]
    for some smooth functions $s_\pm,r_\pm$ such that $s_\pm\geq 0$ vanishes quadratically at $L_\pm$ and $r_\pm$ vanishes cubically at $L_\pm$;

    \item there exists
    $\beta_\pm \in C^\infty\bigl(\overline{{}^{\mathrm{sc}}T^*M}\bigr)$
    such that $\beta_\pm>0$ on $L_\pm$ and
    $\mp \, \overbar{H}_p\rho=\beta_\pm\rho$ modulo cubically vanishing terms;

    \item there exists
    $\beta_{\infty,\pm} \in C^\infty\bigl(\overline{{}^{\mathrm{sc}}T^*M}\bigr)$
    such that $\beta_{\infty,\pm}=0$ on $L_\pm$ and
    $\mp \, \overbar{H}_p\rho_\infty
    =\beta_{\infty,\pm}\beta_\pm\rho_\infty$
    modulo cubically vanishing terms.
\end{enumerate}
\end{definition}
By saying that
$\rho_\pm \in C^\infty\bigl(\overline{{}^{\mathrm{sc}}T^*M}\bigr)$
is a \emph{quadratic defining function} of $L_\pm$, we mean that
$\rho_\pm=\sum_i \rho_{\pm,i}^2$, for finitely many $\rho_{\pm,i}$ such that
$L_\pm=\bigcap_i\{\rho_{\pm,i}=0\}$ within
$\overline{\Sigma}_0\cap
\partial {}^{\mathrm{sc}} T^* \, \overbar{M}$,
and the differentials $d\rho_{\pm,i}$ are linearly independent on
$L_\pm\cap\overline{\Sigma}_0$. Note that any other boundary functions could be used here.

We can now formulate the \emph{non-trapping} condition imposed on $P$, which is the following.

\begin{definition} \label{Pnontrap}
    One says that the operator $P$ is \emph{non-trapping} if there exist sources $L_- \subset \partial {}^{\mathrm{sc}} T^* \, \overbar{M}$ and sinks $L_+ \subset \partial {}^{\mathrm{sc}} T^* \, \overbar{M}$ at infinity in the sense of Definition \ref{defsourcesink}, and, within $\overline{\Sigma}_0$, any bicharacteristic goes from $L_+$ to $L_-$, or from $L_-$ to $L_+$, or stays within $L_+$ or $L_-$. \\ We say that a Lorentzian scattering space is \emph{non-trapping} if the wave operator is non-trapping.
\end{definition}
We will specialize to the Lorentzian setting in Section \ref{sectLorentz} only. We also define the \emph{approximate self-adjointness condition} for the operator $P$, which will also allow us to use the results contained in \cite{dang2024diracoperatorslocalinvariants} by \cite[Thm.~3.9]{dang2024diracoperatorslocalinvariants}.

\begin{definition} \label{defsmallpertalt}
  We say that $P$ satisfies the \emph{approximate self-adjointness condition} if the following properties hold.
   \begin{enumerate}
    \item $P \in \Psi_{\mathrm{sc}}^{2,0} (M;E)$ is classical, and $P - P^* \in \Psi_{\mathrm{sc}}^{1,-1}(M;E)$,
    \item There exists $P_0$ such that $P-P_0$ is sufficiently small in $\Psi_{\mathrm{sc}}^{2,0} (M;E)$, $P_0$ is non-trapping in the sense of Definition \ref{Pnontrap} and $P_0-P^*_0 \in \Psi_{\mathrm{sc}}^{1,-1-\delta}(M;E)$ for some $\delta > 0$.
\end{enumerate} 
\end{definition}

Note that the adjoint operators are defined with respect to the inner product $\langle \cdot, \cdot \rangle_E$ defined above. We then gather all these assumptions in the following hypothesis.
\begin{hypothesis} \label{theH}
    $P \in \Psi_{\mathrm{sc}}^{2,0} (M;E)$ is classical and the characteristic flow of $p$ has sources and sinks $L_-$ and $L_+$ at scattering infinity and is non-trapping in the sense of Definition \ref{Pnontrap}. Moreover, if $P^*$ denotes the formal adjoint of $P$, either
\begin{equation}\label{thefirst}
    P - P^* \in \Psi^{1, -1-\delta}_{\mathrm{sc}}(M;E)
\end{equation}
for some $\delta > 0$, or $P$ satisfies the approximate self-adjointness condition of Definition \ref{defsmallpertalt}.
\end{hypothesis}
The first alternative (\ref{thefirst}) in Hypothesis \ref{theH} means that the failure of formal self-adjointness is a first-order term whose coefficients decay sufficiently fast at infinity. The extra $\delta>0$ makes that failure negligible in the leading radial estimates; see the \emph{positive commutator} proof in \cite[Prop~2.10--2.11]{dang2024diracoperatorslocalinvariants} and the discussion in \cite[\S~3.4]{dang2024diracoperatorslocalinvariants}.

In this setting, the resolvent of the operator $P$, denoted by $(P-z)^{-1}$, has been proven to have \textit{Feynman wavefront set}, denoted by $$\Lambda_{\mathrm{F}} \subset T^*(M \times M) \setminus o .$$
\begin{proposition} [Proposition 3.8 in \cite{dang2024diracoperatorslocalinvariants}] \label{resFeyn} Suppose $P \in \Psi_{\mathrm{sc}}^{2,0} (M;E)$ satisfies Hypothesis \ref{theH}. Then, for any $s \in \R$, the family of resolvents $\{ (P-z)^{-1} \}_{z \in \gamma_{\epsilon}}$ satisfies
    \begin{equation} \label{unirelation}
        \wf'^{(s)}_{\langle z\rangle^{-1}} \left( \resP \right) \subset \{ (q_1,q_2) \in S^*M \times S^*M \ \vert \ \, q_1 \succ q_2 \ \, \, \textnormal{or} \ \, q_1 = q_2 \} =: \Lambda'_{\mathrm{F}},
    \end{equation}
    using the notation defined in Definition \ref{defprimerel}.
\end{proposition}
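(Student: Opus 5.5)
The statement is Proposition 3.8 of \cite{dang2024diracoperatorslocalinvariants}. I would reprove it in the framework of \cite[\S~3.3]{Dang:2020jch}, adapted to the bundle-valued, arbitrary-signature setting; no step uses the signature except through the source/sink structure.

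\textbf{Step 1: uniform Fredholm estimates.} Use the propagation estimates for $P-z$ on variable-order spaces $H^{s,\ell}_{\mathrm{sc}}(M;E)$. Choose $\ell$ monotone along $\overline{H}_p$ on $\overline{\Sigma}_0$, above threshold at $L_-$ and below threshold at $L_+$; this is the Feynman choice. Combining
\begin{itemize}
\item elliptic estimates off $\overline{\Sigma}_0$ (including ellipticity at base infinity, since $\mathrm{Im}\, z \geq \varepsilon$),
\item radial estimates at $L_\pm$ (the positive-commutator arguments of \cite[Prop.~2.10--2.11]{dang2024diracoperatorslocalinvariants}, where Hypothesis \ref{theH} keeps $P-P^*$ subprincipally negligible), and
\item real principal type propagation, using non-trapping (Definition \ref{Pnontrap}) to connect $L_-$ to $L_+$,
\end{itemize}
gives invertibility of $P-z$ for $z\in\gamma_\epsilon$. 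The sign of $\mathrm{Im}\, z>0$ is compatible with propagation in the forward direction, so the estimates hold uniformly in $z$. The bound $\|(P-z)^{-1}\|\lesssim \langle z\rangle^{-1}$ on $L^2$ comes from the $L^2$ theory via $\mathrm{Im}\langle (P-z)u,u\rangle$ and the approximate self-adjointness.

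\textbf{Step 2: semiclassical rescaling.} To capture uniformity as $|z|\to\infty$, write $z=h^{-2}w$ with $h=\langle z\rangle^{-1/2}$, so that $h^2P-w$ is a semiclassical operator. For $w$ in a compact set with $\mathrm{Im}\, w\ge 0$, the relevant characteristic set is $\{p=\mathrm{Re}\, w\}$. For large frequencies this reduces to the homogeneous flow on $\overline{\Sigma}_0$.

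\textbf{Step 3: off-diagonal wavefront bound.} Take $(q_1,q_2)$ with $q_1\neq q_2$ and $q_1\not\succ q_2$. Choose $C_1,C_2$ elliptic at $q_1,q_2$ and microsupported near them. There are two cases.
\begin{itemize}
\item If $q_2\notin\overline{\Sigma}_0$ or $q_1\notin\overline{\Sigma}_0$, microlocal elliptic regularity shows $C_1(P-z)^{-1}C_2^*$ gains arbitrarily many derivatives, uniformly with the factor $\langle z\rangle^{-1}$.
\item If both lie in $\overline{\Sigma}_0$ but $q_1$ is not in the forward flowout of $q_2$, consider $u=(P-z)^{-1}C_2^*f$. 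It solves $(P-z)u=C_2^*f$, so $u$ is microlocally smooth on the backward flowout of $q_1$, except where that flowout meets $q_2$, which it does not. Propagate regularity along the bicharacteristic through $q_1$ backwards to $L_-$. There the above-threshold weight gives regularity via the radial estimate, uniformly in $z$ because $\mathrm{Im}\, z>0$ has the favorable sign for forward propagation. This yields $C_1(P-z)^{-1}C_2^*=\pazocal{O}_{H^*\to H^{*+s}}(\langle z\rangle^{-1})$ for every $s$.
\end{itemize}

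\textbf{Main obstacle.} The hardest step is making the propagation estimates uniform in $z$ along the unbounded contour $\gamma_\epsilon$ while keeping the $\langle z\rangle^{-1}$ gain. I would handle this by the semiclassical reformulation of Step 2 and a bootstrap of Sobolev orders: each application of the estimate improves regularity by one order with constants independent of $h$, and the non-trapping hypothesis ensures the flowout from $q_1$ reaches $L_-$ in a uniformly bounded rescaled time.
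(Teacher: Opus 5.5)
Your overall architecture is the standard one: uniform Feynman estimates for $P-z$, elliptic regularity off $\overline{\Sigma}_0$, the above-threshold radial estimate at $L_-$, and propagation to $q_1$ when $q_2$ is not on the backward flowout of $q_1$. It is what the paper imports from \cite[Prop.~3.8]{dang2024diracoperatorslocalinvariants}; the paper gives no proof beyond that citation, and there the uniformity comes from a resolved, sc-quadratic calculus. Two steps of your sketch are wrong as written.

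\textbf{The weight $\langle z\rangle^{-1}$.} Definition \ref{defwf} only applies once $(P-z)^{-1}=\pazocal{O}_{C^\infty\to C^\infty}(\langle z\rangle^{-1})$, that is, uniformly $H^\ell_{\mathrm c}\to H^{\ell+s}_{\mathrm{loc}}$ for every $\ell$ with one fixed $s$. The same global bound is what controls the $\|u\|_{H^{-N}}$ error terms in your microlocal estimates with the right power of $\langle z\rangle$. You derive it from $\mathrm{Im}\langle (P-z)u,u\rangle$, but under Hypothesis \ref{theH} $P$ need not be symmetric. One has $\mathrm{Im}\langle (P-z)u,u\rangle=-\mathrm{Im}(z)\|u\|^2+\frac{1}{2i}\langle (P-P^*)u,u\rangle$ with $P-P^*$ of order one. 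The last term is only bounded by an $H^{1/2}_{\mathrm{sc}}$-type norm, so the identity does not close in $L^2$. Even for symmetric $P$, the argument presupposes that the Feynman inverse is an $L^2$ resolvent (essential self-adjointness), and it gives only an $L^2\to L^2$ bound, not all Sobolev scales. The correct input is the uniform Feynman-space bound of Proposition \ref{propfredreal}, i.e.\ \cite[Prop.~3.6]{dang2024diracoperatorslocalinvariants}. Also, your estimates only give Fredholmness. Invertibility on $\gamma_\epsilon$ comes from the choice of contour (Theorem \ref{propnonreal}), not from the estimates.

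\textbf{The characteristic set in Step 2.} On the unbounded part of $\gamma_\epsilon$, (\ref{ineqinut}) gives $|\mathrm{Im}(z)|\ge c\langle z\rangle$, so $w=h^2z$ satisfies $\mathrm{Im}(w)\ge c>0$. Hence the rescaled symbol $p-w$ vanishes nowhere at finite frequency or at base infinity. The only characteristic set is at fiber infinity, where $w$ drops out and you recover $\overline{\Sigma}_0$ with the homogeneous flow. Your description ``$\{p=\mathrm{Re}\,w\}$ for $\mathrm{Im}\,w\ge 0$'' would put finite-energy surfaces into the characteristic set. Propagating through them would need non-trapping of the Hamilton flow on $\{p=\mathrm{Re}\,w\}$, which Hypothesis \ref{theH} does not provide. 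It is exactly the angle of the contour that makes the homogeneous non-trapping condition sufficient.

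With these two corrections, Step 3 goes through, read as propagating regularity \emph{from} $L_-$ forward \emph{to} $q_1$. The favourable sign of $\mathrm{Im}(z)$ makes the extra term $\mathrm{Im}(z)\|Bu\|^2$ in the commutator argument droppable, which is what gives uniformity in $z$.
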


 Recall the complex contour defined in \cite[Def. 5.5]{dang2024diracoperatorslocalinvariants}. Let $\tilde{\gamma}_{\epsilon}$ be the contour in the complex plane of the form 
\begin{equation*} 
    \tilde{\gamma}_{\epsilon} = e^{i(\pi - \theta)} \left]- \infty, \frac{\epsilon}{2} \right ] \cup \left\{ \frac{\epsilon}{2} e^{i\omega}, \,  \pi - \theta < \omega < \theta   \right\}   \cup e^{i\theta} \left[ \frac{\epsilon}{2}, \infty \right[
\end{equation*}
for some fixed $\theta \in \left] 0,  \frac{\pi}{2}\right [ $.

\begin{figure}[h!]
\centering
\begin{tikzpicture}[scale=2.2, >=latex]
\usetikzlibrary{decorations.markings}

\def\e{0.5} 
\def\r{0.25} 
\def\th{15}  
\def\d{0.01}

\tikzset{
  midarrow/.style={
    postaction={decorate,decoration={
      markings, mark=at position 0.5 with {\arrow{>}}
    }}
  }
}

\draw[->] (-2.5,0) -- (2.5,0) node[right] {$\mathrm{Re}\,z$};
\draw[->] (0,-0.2) -- (0,1.6) node[above] {$i\,\mathrm{Im}\,z$};

\draw[dashed, gray] (-2,\e) -- (2,\e);

\draw[<->, gray] (1,0) -- (1,\e);
\node[left, gray] at (1,\e/2) {$\varepsilon$};


\draw[thick,midarrow]
(-2, {\e + tan(\th)*(2 - \r*cos(\th))})
-- ({-\r*cos(\th)}, {\r*sin(\th) + \e});

\draw[thick,midarrow]
({-\r*cos(\th)}, {\r*sin(\th) + \e})
arc[start angle=-200, end angle=20, radius=\r];

\draw[thick,midarrow]
({\r*cos(\th) - \d}, {\r*sin(\th) + \e - \d*tan(\th)})
-- (2, {\e + tan(\th)*(2 - \r*cos(\th))});

\draw[<->, gray, line width=0.05pt]
(0,\e) -- ({\r*sin(45)}, {\e - \r*cos(45)})
node[midway, anchor=south east, xshift=4.5pt, yshift=-12pt, gray] {\tiny $\frac{\varepsilon}{2}$};


\node at (-0.75,{\e + 0.4}) {$\gamma_\varepsilon$};

\end{tikzpicture}

 \captionsetup{justification=centering}
\caption{The contour $\gamma_\varepsilon
= \widetilde{\gamma}_\varepsilon+i\varepsilon$}

\end{figure} 
 There are constants $c, C > 0$ such that, on the unbounded part of $\gamma_{\epsilon} $,
\begin{equation} \label{ineqinut}
  c \langle z \rangle  \leq \vert  \mathrm{Im}(z) \rvert \leq \langle z \rangle, \ \ \ z \in \gamma_{\epsilon}, \ \ \lvert z \rvert \geq  C,
\end{equation}
and the compact part of $\gamma_{\epsilon}$ lies in the resolvent set. Below, we present the main results of \cite{dang2024diracoperatorslocalinvariants}.

\smallskip

\begin{theorem} \label{propnonreal}
    Assume Hypothesis \ref{theH}. Then, the non-real spectrum of the minimal extension of $P$, i.e. the closure of \(P\) initially defined on $C^{\infty}_{\mathrm{c}}(M;E)$, consists at most of isolated points that lie in $\{ \vert \mathrm{Im(}\mathrm{ \lambda}) \vert \leq R \}$ for some $R > 0$. 
\end{theorem}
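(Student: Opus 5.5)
The plan is to prove the statement in two steps. First, show that $P-\lambda$ is Fredholm between suitable variable-order scattering Sobolev spaces for non-real $\lambda$. Second, show that the resulting Fredholm inverse is invertible for $|\mathrm{Im}\,\lambda|$ large, and then invoke analytic Fredholm theory.

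\textbf{Step 1: Fredholm property.} Fix $\lambda$ with $\mathrm{Im}\,\lambda\neq 0$. Choose a variable weight order $\ell$ that is monotone along the Hamilton flow on $\overline{\Sigma}_0$. It should lie above the threshold value $-\tfrac12$ at the sources $L_-$ and below it at the sinks $L_+$, or the reverse, matching the Feynman choice. With $\mathcal{X}^{s,\ell}=\{u\in H^{s,\ell}_{\mathrm{sc}} : (P-\lambda)u\in H^{s-1,\ell+1}_{\mathrm{sc}}\}$, I would establish the estimate
\[
\|u\|_{s,\ell}\le C\bigl(\|(P-\lambda)u\|_{s-1,\ell+1}+\|u\|_{s',\ell'}\bigr),\qquad s'<s,\ \ell'<\ell,
\]
by combining four ingredients:
\begin{itemize}
\item elliptic estimates off $\overline{\Sigma}_{\mathrm{sc}}(P-\lambda)$, which include base infinity away from the corner since $\mathrm{Im}\,\lambda\neq0$;
\item real-principal-type propagation along null bicharacteristics;
\item high- and low-regularity radial estimates at $L_\mp$, proved by positive commutators using items b)--d) of Definition~\ref{defsourcesink};
\item the non-trapping condition of Definition~\ref{Pnontrap}, which ensures every bicharacteristic is controlled.
\end{itemize}
The same estimate for $P^*-\bar\lambda$ on the dual spaces holds because $P-P^*$ is lower order with the decay from Hypothesis~\ref{theH}. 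It gives closed range and finite cokernel, since the $\delta>0$ makes the skew part negligible at threshold. Compactness of $H^{s,\ell}_{\mathrm{sc}}\hookrightarrow H^{s',\ell'}_{\mathrm{sc}}$ then gives the Fredholm property. In the approximate self-adjointness case, I would apply the argument to $P_0$ and perturb, since the Fredholm property is stable under small perturbations.

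\textbf{Step 2: invertibility for large $|\mathrm{Im}\,\lambda|$.} I would redo the estimates semiclassically with $h=|\lambda|^{-1/2}$ and spectral parameter $z=h^2\lambda$, where $\mathrm{Im}\,z$ is bounded below. These non-trapping semiclassical estimates yield $\|u\|\le Ch^{-1}\|(h^2P-z)u\|$ for both $P$ and $P^*$ when $h$ is small. Hence $P-\lambda$ is invertible whenever $|\mathrm{Im}\,\lambda|\ge R$. Because $\{\lambda : \mathrm{Im}\,\lambda>0\}$ and $\{\lambda : \mathrm{Im}\,\lambda<0\}$ are connected and each contains points of invertibility, analytic Fredholm theory shows that $(P-\lambda)^{-1}$ is meromorphic there with finite-rank poles. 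The non-real spectrum is therefore discrete and lies in $\{|\mathrm{Im}\,\lambda|\le R\}$.

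\textbf{Step 3: identification with the minimal extension.} It remains to show that this spectrum is the spectrum of the closure of $P$ from $C^\infty_{\mathrm c}$. For $\lambda$ in the invertibility region, I would show that $(P-\lambda)^{-1}$ restricted to $L^2$ maps into the domain of the closure. I would argue this by density of $C^\infty_{\mathrm c}$ in $\mathcal{X}^{s,\ell}$ with the graph norm, together with the Feynman wavefront property of Proposition~\ref{resFeyn}. The idea is that $L^2$ data produce solutions lying in $H^{s,\ell}$ with regularity propagated from the radial sets, so they are graph-norm limits of test functions. I would follow Vasy's essential self-adjointness argument here.

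I expect the main obstacle to be the uniform semiclassical radial estimates at the corner ${}^{\mathrm{sc}}S^*_{\partial M}M$ where $L_\pm$ meet fiber infinity, in the non-self-adjoint bundle setting. Controlling the threshold conditions uniformly in $h$ while $P-P^*\neq0$ is where I would rely most heavily on \cite[Prop.~2.10--2.11, Thm.~3.9]{dang2024diracoperatorslocalinvariants}.
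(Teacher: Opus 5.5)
The paper does not argue this statement at all: it quotes \cite[Thm.~3.3]{dang2024diracoperatorslocalinvariants} for the non-trapping case and \cite[Thm.~3.9]{dang2024diracoperatorslocalinvariants} under approximate self-adjointness. Your outline has the expected architecture: sc-Fredholm estimates, invertibility at large spectral parameter, meromorphy, and identification with the $L^2$ realization. However, Step~2 does not deliver the strip you conclude.

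With $h=|\lambda|^{-1/2}$ and $z=h^2\lambda$ you have $|z|=1$ and $\mathrm{Im}\,z=\mathrm{Im}\,\lambda/|\lambda|$. So $\mathrm{Im}\,z$ is bounded below only on a sector $\{|\mathrm{Im}\,\lambda|\geq c|\lambda|\}$. For $\lambda=\mu+iR$ with $|\mu|\to\infty$ you get $\mathrm{Im}\,z\sim Rh^2\to 0$, and three things go wrong there:
\begin{itemize}
\item $h^2P-z$ is semiclassically characteristic on the non-null level sets $\{p=\pm1\}$ at finite frequency.
\item The absorption $h^2\,\mathrm{Im}\,\lambda=O(h^2)$ cannot dominate the skew-adjoint part $h^2(P-P^*)$, which is $O(h)$ because $P-P^*$ is first order.
\item Hypothesis~\ref{theH} says nothing about these level sets, since Definition~\ref{Pnontrap} only constrains the null bicharacteristics in $\overline{\Sigma}_0$.
\end{itemize}
What your Steps~1--2 actually prove is weaker. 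The non-real spectrum is discrete and, outside a compact set, contained in $\{|\mathrm{Im}\,\lambda|<c|\lambda|\}$ for any prescribed $c>0$. Passing to a strip would need an additional high-energy estimate near the real axis, i.e.\ control of non-null bicharacteristics. Your hypotheses do not obviously provide this, and the paper does not address it beyond the citation. Note that the sector version is all that is used later, since away from a compact set $\gamma_\varepsilon$ lies in such a sector, cf.~\eqref{ineqinut}.

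Three further steps are asserted rather than proved.

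\emph{Meromorphy.} Analytic Fredholm theory needs a holomorphic family between fixed spaces. Your $\mathcal{X}^{s,\ell}$ depends on $\lambda$, because $\lambda\,\mathrm{Id}$ does not map $H^{s,\ell}_{\mathrm{sc}}$ into $H^{s-1,\ell+1}_{\mathrm{sc}}$ (which is why the paper writes $\pazocal{X}^{s,\ell}_{\mathrm{sc}}(z)$). Meromorphy therefore needs a separate argument.

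\emph{Identification with the minimal extension.} Step~3 is exactly where non-self-adjointness bites. Vasy's argument shows that an $L^2$ solution of $(P-\lambda)u=0$ is above threshold at $L_-$ and hence lies in the Feynman space. Off the discrete set this gives $\ker(P_{\max}-\lambda)=0$ and $\ker(P^*_{\max}-\bar\lambda)=0$, i.e.\ injectivity and dense range of $P_{\min}-\lambda$, but not closed range. In the self-adjoint case closed range comes free from $\|(P-\lambda)u\|\geq|\mathrm{Im}\,\lambda|\,\|u\|$. Here $\mathrm{Im}\langle(P-\lambda)u,u\rangle$ contains $\langle\tfrac{1}{2i}(P-P^*)u,u\rangle$ with a first-order operator, so you must prove an $L^2$ bound for the Feynman inverse on $C^\infty_{\mathrm c}$ data. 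Density of $C^\infty_{\mathrm c}$ in $\mathcal{X}^{s,\ell}$ for its own graph norm does not give this:
\begin{itemize}
\item that norm is not the $L^2$ graph norm of $P$;
\item the Feynman solution is only controlled in $H^{s,\ell}_{\mathrm{sc}}$ with $\ell<-\tfrac12$ at $L_+$, so it is not known to lie in $L^2$;
\item in your indexing, general $L^2$ data do not even lie in $H^{s-1,\ell+1}_{\mathrm{sc}}$ near $L_-$, where $\ell+1>\tfrac12$.
\end{itemize}

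\emph{Approximate self-adjointness.} Here $P-P_0\in\Psi^{2,0}_{\mathrm{sc}}$ does not map $H^{s,\ell}_{\mathrm{sc}}$ into $H^{s-1,\ell+1}_{\mathrm{sc}}$. So stability of Fredholm operators under small perturbations does not apply. The smallness must instead be used to transfer the source/sink structure, non-trapping and threshold inequalities to $P$ itself, as in \cite[Thm.~3.9]{dang2024diracoperatorslocalinvariants}.
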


\begin{proof}
    This is \cite[Thm.~3.3]{dang2024diracoperatorslocalinvariants} for $P$ that is assumed non-trapping. Under the approximate self-adjointness condition, the result is given by \cite[Thm.~3.9]{dang2024diracoperatorslocalinvariants}. 
\end{proof}

For each $\epsilon > 0$ there is an upper-half-plane contour $\gamma_{\epsilon}$ of the form $\gamma_\varepsilon
= \widetilde{\gamma}_\varepsilon+i\varepsilon$, which is the one shown in \textit{Figure 1}, deformed by a compact amount if necessary, which avoids the non-real spectrum of $P$.

\begin{definition}[Feynman admissible scattering weight] \label{deffeynadmscwei}
Let \(\ell\in C^\infty(\overline{{}^{\mathrm{sc}}T^*M})\). We say that
\(\ell\) is a \emph{Feynman admissible scattering weight} if \(\ell\) is
non-increasing along the Hamilton flow on \(\overline{\Sigma}_0\) and satisfies
the radial threshold conditions
\[
    \ell|_{L_-}>-\frac12,
    \qquad
    \ell|_{L_+}<-\frac12 .
\]
\end{definition}

\begin{proposition}[Fredholm realization] \label{propfredreal}
Assume Hypothesis \ref{theH}. For \(z\in\mathbb C\) with \(\operatorname{Im}(z)>0\), define the \emph{Feynman spaces}
\[
    \pazocal{Y}_{\mathrm{sc}}^{s,\ell}
    :=
    H_{\mathrm{sc}}^{s,\ell}(M;E)
\]
and
\[
    \pazocal{X}_{\mathrm{sc}}^{s,\ell}(z)
    :=
    \Bigl\{u \in
        \bigcup_{s',\ell'}
H_{\rm sc}^{s',\ell'}(M;E)
        \ \vert \ 
        (P-z)u\in H_{\mathrm{sc}}^{s,\ell}(M;E)
    \Bigr\},
\]
where \(\ell\in C^\infty(\overline{{}^{\mathrm{sc}}T^*M})\) is a Feynman admissible scattering weight in the sense of Definition \ref{deffeynadmscwei}.
Then
\[
    P-z:
    \pazocal{X}_{\mathrm{sc}}^{s,\ell}
    \longrightarrow
    \pazocal{Y}_{\mathrm{sc}}^{s,\ell}
\]
is Fredholm, and, away from a discrete set of poles, invertible. Moreover, along $\gamma_{\epsilon}$, the resolvent $\resP$ is $\pazocal{O}(\langle z \rangle^{-1})$ as an operator in $\pazocal{B}(\pazocal{Y}_{\mathrm{sc}}^{s,\ell},\pazocal{X}_{\mathrm{sc}}^{s,\ell})$.
\end{proposition}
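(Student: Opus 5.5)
The Fredholm property and the bound along $\gamma_\epsilon$ should both come from the propagation-of-singularities and radial estimates for $P-z$ in the sc-calculus. Let $\ell$ be a Feynman admissible weight. Since $\ell$ is non-increasing along $\overbar{H}_p$ on $\overline{\Sigma}_0$, high regularity is propagated forward from the sources $L_-$, where the weight lies above threshold, to the sinks $L_+$, where it lies below threshold. Following \cite[Prop.~2.10--2.11]{dang2024diracoperatorslocalinvariants}, the plan has four steps.

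\textbf{Step 1: the estimate for $P-z$.} We combine three estimates: sc-elliptic estimates on $\mathrm{Ell}_{\mathrm{sc}}(P-z)$, which contains base infinity away from $\overline{\Sigma}_0$ because $\mathrm{Im}(z)>0$; above-threshold radial estimates at $L_-$; and below-threshold radial estimates at $L_+$, fed by propagation along the null bicharacteristics. Non-trapping (Definition~\ref{Pnontrap}) ensures every bicharacteristic in $\overline{\Sigma}_0$ meets both ends, so these pieces cover everything and give
\[
\|u\|_{s+1,\ell-1}\le C\bigl(\|(P-z)u\|_{s,\ell}+\|u\|_{s',\ell'}\bigr),\qquad s'<s+1,\ \ell'<\ell-1 .
\]
Under the first alternative of Hypothesis~\ref{theH}, the skew part $P-P^*\in\Psi^{1,-1-\delta}_{\mathrm{sc}}$ lies in lower order at the radial sets and does not affect the thresholds. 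Under the approximate self-adjointness condition, one perturbs from $P_0$ instead. The sign of $\mathrm{Im}(z)>0$ gives the correct sign for the $z$-term in the positive commutator, compatible with the Feynman direction.

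\textbf{Step 2: the adjoint estimate and Fredholmness.} We prove the same kind of estimate for $P^*-\bar z$ on the dual spaces $H^{-s,-\ell}$. Here $-\ell$ is non-decreasing along the flow and the threshold roles of $L_\pm$ are swapped, which is exactly the backward Feynman problem. The inclusions $H^{s+1,\ell-1}\hookrightarrow H^{s',\ell'}$ are compact, so $P-z$ has finite-dimensional kernel and closed range, and the adjoint estimate makes the cokernel finite-dimensional. Thus $P-z:\pazocal{X}^{s,\ell}_{\mathrm{sc}}\to\pazocal{Y}^{s,\ell}_{\mathrm{sc}}$ is Fredholm. The family is holomorphic in $z$ on the upper half-plane, so analytic Fredholm theory gives invertibility away from a discrete set once invertibility holds at a single point. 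That point is supplied by Theorem~\ref{propnonreal}: for large $\mathrm{Im}(z)$, $z$ is in the resolvent set of the closed extension, and kernels in $\pazocal{X}$ are regular enough to lie in its domain. This forces $\ker=0$, and the same argument for the adjoint forces $\mathrm{coker}=0$.

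\textbf{Step 3: uniform bounds along $\gamma_\epsilon$.} I expect this to be the main obstacle. The idea is semiclassical: set $h=\langle z\rangle^{-1/2}$ and write $h^2(P-z)=h^2P-\hat z$ with $|\hat z|\sim 1$. By \eqref{ineqinut}, $\mathrm{Im}\,\hat z\ge c>0$ on the unbounded part of $\gamma_\epsilon$, so $h^2P-\hat z$ is semiclassically elliptic at finite frequencies. The propagation and radial estimates are redone uniformly in the semiclassical sc-calculus, where the radial sets at fiber infinity and the source/sink structure of Definition~\ref{defsourcesink} are unchanged. With nontrapping, this gives a semiclassical estimate with no compact error term, so absorbing the remainder is automatic for small $h$. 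Unwinding the scaling gives $\|(P-z)^{-1}\|_{\pazocal{Y}\to\pazocal{X}}=\pazocal{O}(\langle z\rangle^{-1})$ for large $|z|$. On the compact part of $\gamma_\epsilon$ the resolvent is continuous and $\gamma_\epsilon$ avoids the poles, so the bound holds there too. The delicate point is the corner ${}^{\mathrm{sc}}S^*_{\partial M}M$: the semiclassical threshold computations have to stay uniform there, and the variable weight $\ell$ must be handled with the $\delta$-loss symbol class.
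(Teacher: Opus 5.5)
The paper does not reprove this statement. It imports Fredholmness and the bound on the unbounded part of $\gamma_\varepsilon$ from \cite[Prop.~3.6]{dang2024diracoperatorslocalinvariants}, via \cite[Thm.~3.9]{dang2024diracoperatorslocalinvariants} in the approximately self-adjoint case, and adds only holomorphy and compactness on the compact part. Your Steps~1--2 reconstruct the radial/propagation mechanism behind that citation, but Step~3 does not give the rate you claim.

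Semiclassical propagation and radial estimates on the characteristic set at fiber infinity lose a factor $h^{-1}$. What your argument yields is $\|u\|_{H^{s+1,\ell-1}_{h}}\le Ch^{-1}\|(h^2P-\hat z)u\|_{H^{s,\ell}_{h}}$, which unwinds to $\pazocal{O}(\langle z\rangle^{-1/2})$ in $h$-dependent norms. Returning to the fixed norm of $\pazocal{Y}^{s,\ell}_{\mathrm{sc}}$ costs further powers of $\langle z\rangle$.

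In fact, once $\pazocal{X}$ carries the one-order gain of your Step~1 estimate, no decay in $z$ is possible at all. The same holds for the graph norm, whose $(P-z)u$ part is just $\|f\|$. Consider the Minkowski model near a null covector of length $\lambda$. Up to modulation and lower-order corrections, $(P-z)^{-1}$ is convolution along the null direction with $\frac{i}{2\lambda}e^{izt/(2\lambda)}\mathbf{1}_{t>0}$. For fixed $z$ and $\lambda\to\infty$, the operator $\lambda(P-z)^{-1}$ tends to the Hardy operator $g\mapsto\frac{i}{2}\int_{-\infty}^{t}g$. Hence $\|(P-z)^{-1}\|_{H^{s,\ell}_{\mathrm{sc}}\to H^{s+1,\ell-1}_{\mathrm{sc}}}$ is bounded below independently of $z$. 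So the $\pazocal{O}(\langle z\rangle^{-1})$ bound can only be a no-gain bound on $\pazocal{Y}^{s,\ell}_{\mathrm{sc}}$. That is also all that is used later, in the form $R(z)=\pazocal{O}_{C^\infty\to C^\infty}(\langle z\rangle^{-1})$.

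The missing idea for that bound is to use \eqref{ineqinut} not only for ellipticity at finite frequencies but as a non-small absorbing term. The term $-\mathrm{Im}\,z$ has the sign compatible with forward (Feynman) propagation. So run a single global positive-commutator argument whose commutant is built from a Feynman admissible weight: monotone along $\overbar{H}_p$ on $\overline{\Sigma}_0$ and satisfying the thresholds at $L_\pm$, which is where non-trapping enters. Then the absorption term $\mathrm{Im}\,z\,\langle Au,u\rangle$ and the commutator term enter with the same sign. The $z$-independent lower-order errors are absorbed by $\mathrm{Im}\,z\gtrsim\langle z\rangle$. Off $\overline{\Sigma}_0$ you need an elliptic parametrix uniform in $z$, which your rescaling or the resolved calculus of \cite{dang2024diracoperatorslocalinvariants} provides. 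This gives $\|(P-z)^{-1}\|_{\pazocal{B}(H^{s,\ell}_{\mathrm{sc}})}\lesssim\langle z\rangle^{-1}$ with no compact remainder, and hence invertibility for large $|z|$ directly.

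The same estimate also repairs two unjustified points in Step~2.
\begin{itemize}
\item Elements of $\ker(P-z)\cap\pazocal{X}$ are a priori only in $H^{\infty,\ell}_{\mathrm{sc}}$ with $\ell<-\frac12$ at $L_+$. Regularity theory alone therefore does not put them in $L^2$, let alone in the domain of the closure, and Theorem~\ref{propnonreal} by itself does not give $\ker=0$.
\item Step~1 only places $\pazocal{X}(z)$ in $H^{s+1,\ell-1}_{\mathrm{sc}}$, which is not contained in $H^{s,\ell}_{\mathrm{sc}}$ at the corner. So it does not show $(z-z')u\in\pazocal{Y}$, i.e.\ that the domain is independent of $z$. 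Analytic Fredholm theory needs this, and the no-gain estimate supplies it by giving $\pazocal{X}(z)\subset H^{s,\ell}_{\mathrm{sc}}$.
\end{itemize}
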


\begin{proof}
For $\mathrm{Im}(z)$ large, the Fredholm statement is the one appearing in \cite[Prop.~3.6]{dang2024diracoperatorslocalinvariants} and we wrote it here in the scattering framework (in \cite{dang2024diracoperatorslocalinvariants}, they use the \emph{resolved} setting, giving rise to the \emph{scattering quadratic calculus}); see \cite[\S~2.4]{dang2024diracoperatorslocalinvariants} for more details.

Note that the inequalities in $(\ref{ineqinut})$ hold on the unbounded part of $\gamma_{\epsilon}$. Its compact part can be deformed to avoid the finitely many poles it meets. On the unbounded part the global estimate, including the $\pazocal{O}(\langle z \rangle^{-1})$ decay, follows from \cite[Prop.~3.6]{dang2024diracoperatorslocalinvariants} as well; on the compact part it follows from holomorphy of the resolvent and compactness of the contour. Under the approximate self-adjointness condition, we apply the same reasoning after using \cite[Thm.~3.9] {dang2024diracoperatorslocalinvariants}.
\end{proof}

Let $\varepsilon > 0$. Within this framework, we define the complex powers of $P$ as
\begin{equation*} (P -i\epsilon)^{-\alpha} := \frac{1}{2 \pi i } \int_{\gamma_{\epsilon}} (z -i\epsilon)^{-\alpha}(P - z)^{-1} dz.
\end{equation*}
By Theorem \ref{propnonreal} and Proposition \ref{propfredreal}, together with the fact that we can deform the contour $\gamma_{\epsilon}$ by a compact amount that avoids the non-real spectrum of $P$, the above integral makes sense as an operator in $\pazocal{B}\left(L^2(M;E)\right)$ for $\Re (\alpha) > 0$. For the lower-half-plane family $(P+i\epsilon)^{-\alpha}$, one uses the reflected contour. 

Note that this definition depends on the choice of contour $\gamma_{\epsilon}$, but by \textnormal{\cite[Lem.~5.6]{dang2024diracoperatorslocalinvariants}} two different choices differ by a finite-rank operator with smooth Schwartz kernel. As a consequence, the local singular expansion of the complex powers as distributions and all residues obtained are independent of this choice.

\subsection{The elementary family in arbitrary signature} \label{sectelemfam}

Consider a flat pseudo-Euclidean space of metric signature $(\kappa,n-\kappa)$. Set
\begin{equation*} 
    Q_\kappa(\xi)
    =
    -\xi_1^2-\cdots-\xi_\kappa^2
    +\xi_{\kappa+1}^2+\cdots+\xi_n^2.
\end{equation*}

For $\operatorname{Im}(z)>0$ and initially for $\operatorname{Re}(\alpha)\gg0$, define
\begin{equation} \label{defflatF}
    F_\alpha^\kappa(z,x)
    :=
    \frac{\Gamma(\alpha+1)}{(2\pi)^n}
    \int_{\mathbb{R}^n}
    e^{i\langle x,\xi\rangle}
    \bigl(Q_\kappa(\xi)-z\bigr)^{-\alpha-1}
    \,d^n\xi,
\end{equation}
considered as a distributional inverse Fourier transform.
The branch of the logarithm is the standard branch on
$\mathbb{C} \setminus \left]-\infty,0\right]$. For $\mathrm{Im}(z) < 0$, the family of distributions is defined analogously. We will suppress the superscript $\kappa$ when the signature is fixed.

\begin{proposition} \label{propQk}
The analytic and microlocal properties of the family in
\textnormal{\cite[\S~4]{Dang:2020jch}} remain valid after replacing the Lorentzian quadratic
form by the pseudo-Riemannian $Q_\kappa$:
\begin{enumerate}
    \item
    $x \mapsto F_\alpha^\kappa(z,x)$ is a well-defined family of distributions on $\R^n$, holomorphic in $\alpha \in \C \setminus \{ -1, \ldots, - k, \ldots\}$
    for $\mathrm{Im}(z) > 0$.
    \item
    For the upper-half-plane boundary value, the uniform wavefront set
    is contained in
    \begin{equation} \label{flatwf}
        \Lambda_{0,+}
        =
        \left\{
            \bigl(x,\tau\,dQ_\kappa(x)\bigr)
            :
            Q_\kappa(x)=0,\ \tau<0
        \right\}
        \cup
        \bigl(T_0^*\mathbb{R}^n\setminus o\bigr).
    \end{equation}
    For the lower-half-plane family the inequality on $\tau$ is reversed.

    \item
    Away from $x=0$, the family is rapidly decreasing in $z$ in the normal
    topology associated with (\ref{flatwf}), i.e.
    \begin{equation} \label{normaltopFj}
        F^{\kappa}_{\alpha}(z) = \pazocal O_{\pazocal{D}'_{\Lambda_{0,+}}(\R^n \setminus \{ 0 \})}
    \bigl(\langle z\rangle^{-\infty}\bigr)
    \end{equation}
    for $z \in Z^+_{\epsilon}$, $\epsilon > 0$.
Moreover, given $s \in \R_+,p \in \N$, there is
    $N_0$ such that for all integers $N\geq N_0$,
    \begin{equation} \label{regFflat}
        F_N^\kappa(z)\in C_{\mathrm{loc}}^s(\mathbb{R}^n),
        \qquad
        F_N^\kappa(z)
        =
        \pazocal{O}_{C_{\mathrm{loc}}^s(\mathbb{R}^n)}\bigl(\langle z\rangle^{-p}\bigr)
    \end{equation}
    along the contour $\gamma_{\epsilon}$.

    \item
    The family is invariant under the full group
    $O(\kappa,n-\kappa)$ preserving $Q_\kappa$.
\end{enumerate}
\end{proposition}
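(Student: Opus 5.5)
The approach is to reduce every item to the Lorentzian arguments of \cite[\S~4]{Dang:2020jch}, using one structural observation: the only features of the Lorentzian form used there are that $Q_\kappa$ is a non-degenerate real quadratic form, that $\operatorname{Im}(Q_\kappa(\xi)-z)=-\operatorname{Im}(z)<0$ uniformly, and the Fresnel/Schwinger representation. Concretely, for $\operatorname{Im}(z)>0$ and $\operatorname{Re}(\alpha)>-1$ I would write
\[
\Gamma(\alpha+1)\bigl(Q_\kappa(\xi)-z\bigr)^{-\alpha-1}=i^{\alpha+1}\int_0^\infty t^{\alpha}e^{-it(Q_\kappa(\xi)-z)}\,dt,
\]
which converges because $|e^{itz}|=e^{-t\operatorname{Im}(z)}$. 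I would then compute the inverse Fourier transform in $\xi$ by the Gaussian–Fresnel formula. This gives
\[
F_\alpha^\kappa(z,x)=c_{n,\kappa}\,i^{\alpha+1}\int_0^\infty t^{\alpha-\frac n2}e^{itz}e^{\frac{i}{4t}Q_\kappa^*(x)}\,dt,
\]
where $Q_\kappa^*$ is the dual form (equal to $Q_\kappa$ in these coordinates) and $c_{n,\kappa}=(4\pi)^{-n/2}e^{\mp i\pi(n-2\kappa)/4}$ carries the signature phase. This expresses $F^\kappa_\alpha$ as the Lorentzian formula with $Q_1$ replaced by $Q_\kappa$. Holomorphy in $\alpha$ (item 1) follows for $\operatorname{Re}\alpha\gg0$ directly from the $\xi$-integral, which is a tempered distribution depending holomorphically on $\alpha$. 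The continuation to $\alpha\notin-\N$ follows from $\Gamma(\alpha+1)(Q_\kappa-z)^{-\alpha-1}$ being entire in $\alpha$ pointwise, with polynomial bounds in $\xi$, after dividing by $\Gamma(\alpha+1)$. The poles at negative integers come only from the normalization, exactly as in the Lorentzian case.

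Item 4 is immediate: $Q_\kappa\circ A=Q_\kappa$ and $|\det A|=1$ for $A\in O(\kappa,n-\kappa)$, so the change of variables $\xi\mapsto A^{T}\xi$ in \eqref{defflatF} gives $F_\alpha^\kappa(z,A x)=F_\alpha^\kappa(z,x)$.

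For items 2 and 3, I would treat the $t$-integral as an oscillatory integral with phase $\phi(t,x)=tz+Q_\kappa(x)/(4t)$. Near $x=0$ nothing beyond $T_0^*\R^n$ is claimed. Away from $0$, the critical points in $t$ satisfy $z=Q_\kappa(x)/(4t^2)$. Since $\operatorname{Im}z\ge\varepsilon$ while $Q_\kappa(x)/(4t^2)$ is real, there is no real critical point. I would integrate by parts in $t$ with the operator $(i\partial_t\phi)^{-1}\partial_t$, using $|\partial_t\phi|\gtrsim|z|+\varepsilon$ uniformly on $\{Q_\kappa(x)\neq0\}$ and on compact subsets. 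This yields smoothness and $\pazocal O(\langle z\rangle^{-\infty})$ there. Near the cone $\{Q_\kappa=0\}\setminus0$, I would write $F_\alpha^\kappa(z,x)=f_\alpha(z,Q_\kappa(x)+i0)$ for a one-variable family $f_\alpha(z,\cdot)$ holomorphic in the upper half-plane, of the explicit Bessel/Hankel type $(Q_\kappa+i0)^{\frac{\alpha+1}{2}-\frac n4}\times$ (a $K$-Bessel function of $\sqrt{-z(Q_\kappa+i0)}$). Pulling back the wavefront set $\{(0,\tau):\tau<0\}$ of such boundary values by the submersion $Q_\kappa$, which is a submersion off $0$, then gives $\Lambda_{0,+}$. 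The uniform-in-$z$ version requires the exponential decay $e^{-c\sqrt{|z|}|Q_\kappa|^{1/2}}$ together with the symbol estimates from the Lorentzian proof, which use only the one-variable structure.

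Finally, \eqref{regFflat} follows from $\widehat{F_N}\sim\langle\xi\rangle^{-2N-2}$ on the region where $|Q_\kappa(\xi)-z|\gtrsim\langle\xi\rangle^2+|z|$. The real obstacle is here: in indefinite signature $|Q_\kappa(\xi)-z|$ is only $\ge\operatorname{Im}z\approx\langle z\rangle$ near the null cone, rather than $\gtrsim\langle\xi\rangle^2$. I would handle this as in \cite{Dang:2020jch} by interpolating:
\[
|Q_\kappa(\xi)-z|^{-N-1}\le\langle z\rangle^{-p}\,|Q_\kappa(\xi)-z|^{-N-1+p},
\]
combined with the estimate $\int\langle\xi\rangle^{s}|Q_\kappa(\xi)-z|^{-m}d\xi\lesssim\langle z\rangle^{C}$ for $m$ large. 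That estimate I would prove in polar coordinates adapted to $\R^\kappa\times\R^{n-\kappa}$, integrating transversally to the cone. Choosing $N_0$ large relative to $s,p,n$ then closes the bound uniformly along $\gamma_\varepsilon$, using \eqref{ineqinut}.
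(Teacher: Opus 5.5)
Your arguments for items 1 and 4 are fine. Deriving items 2--3 from $F^\kappa_\alpha(z,x)=f_\alpha(z,Q_\kappa(x)+i0)$, i.e.\ as the pullback of a one-variable upper boundary value by the submersion $Q_\kappa$ on $\R^n\setminus\{0\}$, is a legitimate alternative to the paper's stationary-phase analysis of the phase $\langle x,\xi\rangle+\frac u4 Q_\kappa(x)$.

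The genuine gap is the $C^s$ estimate for $F^\kappa_N(z)$. The bound you intend to prove, $\int_{\R^n}\langle\xi\rangle^{s}|Q_\kappa(\xi)-z|^{-m}\,d\xi\lesssim\langle z\rangle^{C}$ for $m$ large, is false for every $m$ as soon as $0<\kappa<n$. On the shell $\{|\xi|\sim R\}$ one has $|dQ_\kappa|\sim R$, so by the coarea formula the slab $\{|Q_\kappa(\xi)|\le 1\}$ has volume $\sim R^{n-2}$ there. On that slab $|Q_\kappa(\xi)-z|^{-m}\approx|z|^{-m}$, independently of $R$. Summing over dyadic shells gives $\sum_j 2^{j(s+n-2)}=\infty$ for all $s\ge 0$. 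For instance, for $n=2$, $\kappa=1$, write $Q_1=ab$ in null coordinates; then $\int|ab-z|^{-m}\,da\,db=C_z\int|b|^{-1}\,db=\infty$. Trading powers of $|Q_\kappa-z|$ for powers of $\langle z\rangle$ cannot repair this. The defect is the absence of any decay in $|\xi|$ along the non-compact null cone, not the size of the exponent. Polar coordinates adapted to $\R^\kappa\times\R^{n-\kappa}$ only exhibit the same divergence. So $\langle\xi\rangle^{s}\widehat{F^\kappa_N}\notin L^1$ for every $N$, and no Fourier-side argument of this type yields $F^\kappa_N\in C^s_{\mathrm{loc}}$.

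The repair is already in your first paragraph: stay in position space. Start from $F^\kappa_N(z,x)=c_{n,\kappa}\,i^{N+1}\int_0^\infty t^{N-\frac n2}e^{itz}e^{iQ_\kappa(x)/(4t)}\,dt$. Each $x$-derivative of $e^{iQ_\kappa(x)/(4t)}$ costs at most a factor $t^{-1}$, locally uniformly in $x$. Hence for $|\beta|\le\lceil s\rceil$ and $N>\frac n2+\lceil s\rceil-1$ one gets $|\partial_x^\beta F^\kappa_N(z,x)|\lesssim\sum_{j=0}^{|\beta|}(\operatorname{Im}z)^{-(N-\frac n2-j+1)}$. By \eqref{ineqinut} this is $\pazocal O(\langle z\rangle^{-p})$ along $\gamma_\varepsilon$ once $N\ge p+\frac n2+\lceil s\rceil-1$. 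The paper obtains the same thresholds from the Dang--Wrochna position-space estimates, propagated in $\alpha$ by the Bernstein--Sato relation.

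Two smaller points. First, your lower bound $|\partial_t\phi|\gtrsim\langle z\rangle$ uses $\operatorname{Im}z\gtrsim\langle z\rangle$. This holds on $\gamma_\varepsilon$ and in sectors, but not on an arbitrary closed $Z^+_\varepsilon\subset\{\operatorname{Im}z\ge\varepsilon\}$. Along $z=R+i\varepsilon$ with $Q_\kappa(x)>0$ there is an almost-critical point at $t^2=Q_\kappa(x)/(4R)$, and the decay is only polynomial, so state the estimates on that region. Second, near the cone the decay $e^{-c\sqrt{|z||Q_\kappa|}}$ gives nothing at $Q_\kappa=0$. What the uniform boundary-value argument actually needs is $|f_\alpha(z,w)|\le C_K(\operatorname{Im}w)^{-K}(\operatorname{Im}z)^{\frac n2-\operatorname{Re}\alpha-K-1}$ for $\operatorname{Im}w>0$. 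This follows from $e^{-\operatorname{Im}w/(4t)}\le C_K(t/\operatorname{Im}w)^K$.
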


\begin{proof}
The proof of \cite[\S~4.1-4.4]{Dang:2020jch} uses only the following properties
of the quadratic form: non-degeneracy, homogeneity, the fact that
$dQ\neq0$ on $Q^{-1}(0)\setminus\{0\}$, and the Fresnel formula for a
non-degenerate quadratic phase. All four properties hold for $Q_\kappa$.

\smallskip

\emph{Schwinger parameter and the Fresnel integral.} Let $\lambda=\alpha+1$ and $\operatorname{Im}(z)>0$. Since
$Q_\kappa(\xi)-z$ lies in the lower half-plane off the real axis, one may write initially for $\operatorname{Re}\lambda>0$
\begin{equation*} 
    \bigl(Q_\kappa(\xi)-z\bigr)^{-\lambda}
    =
    \frac{e^{i\pi\lambda/2}}{\Gamma(\lambda)}
    \int_0^\infty
    e^{-iu(Q_\kappa(\xi)-z)}
    u^{\lambda-1}\,du.
\end{equation*}

The factor $e^{iuz}$ provides exponential damping. The Fourier transform
of $e^{-itQ_\kappa}$ is a non-degenerate Fresnel Gaussian. Its absolute
homogeneity is $u^{-n/2}$, exactly as in Lorentzian signature, while its
constant phase depends on $\kappa$. Consequently,  the scalar parameter
integral has the same powers as in \cite[Lem.~4.4]{Dang:2020jch}. Therefore, all regularity thresholds depend on $n$ and
$\operatorname{Re}(\alpha)$, but not on the signature.

We now explain why the same wavefront estimate holds for \(Q_\kappa\). Let
\(\chi\in C_c^\infty(\mathbb R^n\setminus\{0\})\), and consider the Fourier
transform of \(\chi F_\alpha^\kappa(z,\cdot)\). Using the Schwinger parameter
representation and the Fresnel formula, one obtains, up to harmless constants,
an oscillatory integral with phase
\[
    \Phi(x,u;\xi)
    =
    \langle x,\xi\rangle+\frac{u}{4}Q_\kappa(x),
    \qquad u>0 .
\]
The critical equations
are
\[
    \partial_u\Phi=\frac14 Q_\kappa(x)=0,
    \qquad
    \partial_x\Phi=\xi+\frac{u}{4}dQ_\kappa(x)=0.
\]
Since \(u>0\), this is equivalent to
\[
    Q_\kappa(x)=0,
    \qquad
    \xi=\tau\,dQ_\kappa(x),
    \qquad
    \tau<0.
\]
Hence, for the upper-half-plane boundary value, the wavefront set is contained in 
\[
    \Lambda_{0,+}
    =
    \{(x,\tau dQ_\kappa(x)):Q_\kappa(x)=0,\ \tau<0\}
    \cup
    (T_0^*\mathbb R^n\setminus o).
\]
For the lower-half-plane family the sign of \(\tau\) is reversed.

The ultraviolet estimate is a standard non-stationary phase argument. If
\(\operatorname{supp}\chi\times V\) is disjoint from \(\Lambda_{0,+}\), then
the full differential
\[
    d_{x,u}\Phi
    =
    \left(
        \xi+\frac{u}{4}dQ_\kappa(x),
        \frac14 Q_\kappa(x)
    \right)
\]
does not vanish on the relevant support, for \(\xi\in V\). After a dyadic
decomposition in the parameter \(u\), one integrates by parts with the differential operator $\pazocal{L}$ (see \cite[Lem.~4.5, Step~1]{Dang:2020jch}). Repeated applications of $\pazocal{L}$ give arbitrary inverse powers of
\(\langle\xi\rangle\) on each dyadic piece, and the resulting dyadic series is
summable. This gives rapid decay of the Fourier transform of $
    \chi F_\alpha^\kappa(z,\cdot) $
in \(\xi\in V\), uniformly in the spectral parameter in the same sense as in the Lorentzian argument. This yields the rapid decay away from the critical relation (\ref{normaltopFj}).

For the infrared contribution we use the estimate in the proof of
\cite[Lem.~4.5, Step~2]{Dang:2020jch}: after the change \(u\mapsto u^{-1}\),
the \(x\)-dependent factor is smooth uniformly in $u$, while the
remaining \(u\)-integral is scalar and integrable. This yields rapid decay in
\(\xi\), uniformly in $z$, and, for \(\operatorname{Re}(\alpha)<n/2-1\), the stronger estimate uniform down to \(\operatorname{Im}(z)>0\), as in \cite[(4.48)]{Dang:2020jch}.

\emph{Bernstein--Sato continuation.}
Let $Q_\kappa(\partial_\xi)$ be the constant-coefficient operator obtained
from the inverse matrix of $Q_\kappa$. Differentiating
$\bigl(Q_\kappa(\xi)-z\bigr)^{-\lambda}$ twice and contracting gives a
relation of the form
\begin{equation} \label{diffQ}
    \bigl(
        Q_\kappa(\xi)Q_\kappa(\partial_\xi)+a(\lambda)
    \bigr)
    \bigl(Q_\kappa(\xi)-z\bigr)^{-\lambda}
    =
    b(\lambda)z
    \bigl(Q_\kappa(\xi)-z\bigr)^{-\lambda-1},
\end{equation}

where $a$ and $b$ are the same scalar polynomials as in
\cite[\S~4.4.1]{Dang:2020jch}. The signs in the Hessian cancel against those in
its inverse, so the polynomials depend only on $n$. After inverse Fourier
transform, (\ref{diffQ}) propagates the normal-topology estimates one
unit at a time in $\alpha$. This gives the same regularity exponents as
\cite[Thm.~4.8]{Dang:2020jch}, hence in particular (\ref{regFflat}).

Finally, invariance follows directly from (\ref{defflatF}) and
$| \det A \,|=1$ for $A\in O(\kappa,n-\kappa)$. Note that the poles of $F_{\alpha}^{\kappa}(z)$ are given by \cite[Cor.~4.2]{Dang:2020jch} that holds in arbitrary signature.
\end{proof}

\begin{remark}
\textnormal{The identities in normal coordinates used by Dang--Wrochna apply directly in the general pseudo-Riemannian setting; see \cite[\S~5.1.4]{Dang:2020jch}. The only modification needed for a general signature is the replacement of the flat Lorentzian form by the pseudo-Riemannian $Q_\kappa$ and the corresponding Fresnel integral.}
\end{remark}

\begin{remark}[Convex normal neighborhoods] \label{remconvexnei}
\textnormal{The existence of the neighborhood used below is independent of the signature. It is a theorem about the geodesic spray which says that every point of a smooth pseudo-Riemannian manifold has arbitrarily small reversible convex normal neighborhoods; see \cite{Minguzzi_2014}. Thus, the exponential map supplies
a smooth neighborhood of the zero section on which
\[
    (x,v)\mapsto\bigl(x,\exp_x v\bigr)
\]
is a diffeomorphism onto a neighborhood of the diagonal. This is all that
is required to solve the transport
equations along the distinguished short geodesic, and pull the flat
model back.}

\textnormal{There is some relevant literature for these two ingredients.
Kolk--Varadarajan treat Riesz distributions for quadratic forms of
general indefinite signature \cite{b9000f62-7943-3eb5-a554-ad1f98f1d129}, while Kamiński proves structural results for off-diagonal Seeley--DeWitt and related Hadamard coefficients on manifolds of arbitrary signature \cite{Kaminski:2019adk}. Extending
a single-valued signed squared geodesic distance to a substantially
larger neighborhood of the whole diagonal is a non-trivial patching
problem even in Riemannian and Lorentzian signature; see \cite{Moretti_2021}.
We avoid that issue by working in one such local diagonal neighborhood
and inserting the cut-off $\chi$.}
\end{remark}

\emph{Intrinsic pullback.} Recall the pullback of the elementary family $F_{\alpha}$ on $M\times M$.
Let $\pazocal{N}$ be a neighborhood of the zero section $o$ in $TM$
for which the map
\[
\pazocal{N} \ni (x;v)
\, \mapsto
\bigl(x,\exp_x(v)\bigr) \in M \times M
\]
is a local diffeomorphism onto its image, where
$\exp_x \colon T_xM \to M$ is the exponential geodesic map. Let 
\[
\U = \exp \pazocal{N} \subset M \times M,
\]
which is a neighborhood of $\Delta$, the diagonal in $M \times M$, so that the inverse map
\[
\U \ni (x_1,x_2)
\, \mapsto
\bigl(x_1;\exp_{x_1}^{-1}(x_2)\bigr) \in \pazocal{N}
\]
is a well-defined diffeomorphism.

Let $\Omega$ be an open subset of $M$. Let
$(e_0,\ldots,e_{n-1})$ be a pseudo-Riemannian orthonormal moving frame on
$\Omega$, and let $(s^\mu)_\mu$ be the corresponding
orthonormal moving coframe. We denote by $\epsilon_\mu$ the canonical basis of $\mathbb{R}^n$.
Such an orthonormal moving coframe $(s^\mu)_\mu$ defines, for $(x_1,x_2) \in \U$, the submersion
\begin{equation*}
\begin{aligned}
G \colon (x_1,x_2)  \mapsto
G^\mu(x_1,x_2)\epsilon_\mu =
\underbrace{s^\mu_{x_1}}_{\in T_{x_1}^*M}
(
\underbrace{\exp_{x_1}^{-1}(x_2)}_{\in T_{x_1}M}
)
\epsilon_\mu
\in \mathbb{R}^n .
\end{aligned}
\end{equation*}

Since the distribution $F_\alpha^\kappa(z,\cdot)$ is invariant under $O(\kappa,n-\kappa)$ by Proposition \ref{propQk}, its pullback by $G$ is independent of this choice. We obtain an intrinsic family
\begin{equation} \label{intrinF}
    \mathbf{F}_\alpha^\kappa(z,x) := G^*F_\alpha^\kappa(z,x) \in \pazocal{D}'(\U)
\end{equation}
whose uniform wavefront relation is contained in the Feynman wavefront set, by Proposition \ref{propQk} (2) and \cite[Lem.~5.2]{Dang:2020jch}, the latter argument being valid in arbitrary signature. This reads $$\wf'(\mathbf{F}_\alpha^\kappa(z)) \subset \Lambda'_{\mathrm{F}},$$ where the Feynman wavefront set $\Lambda_{\mathrm{F}}$ is defined in (\ref{unirelation}). 

In the bundle case, parallel transport along radial geodesics identifies the fibers and makes (\ref{intrinF}) scalar on the transported frame.

\medskip

\begin{lemma} \label{lemFonU}
    Let $\epsilon > 0$ and $z$ run in $Z^+_{\epsilon}$. Given $s \in \R_+,p \in \N$, there is
    $s_0 \in \R_+$ such that for any $\alpha \in \C$ with $\mathrm{Re}(\alpha) \geq s_0$,
    \begin{equation*} 
        \mathbf{F}^{\kappa}_{\alpha}(z)\in C_{\mathrm{loc}}^s(\U),
        \qquad
        \mathbf{F}^{\kappa}_{\alpha}(z)
        =
        \pazocal{O}_{C_{\mathrm{loc}}^s(\U)}\bigl(\langle z\rangle^{-p}\bigr).
    \end{equation*}
    Furthermore, we have
    \begin{equation*}
        \mathbf{F}^{\kappa}_{\alpha}(z) = \pazocal{O}_{\pazocal{D}'_{\Lambda_{\mathrm{F}}}(\U \setminus \Delta)}
    \bigl(\langle z\rangle^{-\infty}\bigr)
    \end{equation*}
    for all $\alpha \in \C$.
\end{lemma}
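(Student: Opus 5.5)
The plan is to deduce both statements from the flat estimates of Proposition \ref{propQk} by pulling back along the submersion $G:\U\to\R^n$ defined before (\ref{intrinF}). For the regularity claim I first need the flat statement in the form \emph{for $\Re(\alpha)\geq s_0$} rather than for integers $N\geq N_0$. Proposition \ref{propQk} (3) gives (\ref{regFflat}) only for integer $N$, so I will go back to the Schwinger representation used in its proof. For $\Im(z)>0$ one has
\[
F_\alpha^\kappa(z,x)=c_{n,\kappa}\,e^{i\pi(\alpha+1)/2}\int_0^\infty e^{iuz}\,u^{\alpha-n/2}\,e^{i Q_\kappa^{*}(x)/(4u)}\,du,
\]
where $Q_\kappa^{*}$ is the dual form and the constant phase $c_{n,\kappa}$ comes from the Fresnel formula. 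Any $x$-derivative of order $\leq s$ brings down at most $u^{-s}\langle x\rangle^{s}$. On $z\in Z^+_\epsilon$ one has $|e^{iuz}|\leq e^{-u\Im z}$, and the integral of $u^{\Re\alpha-n/2-s}e^{-u\Im z}$ is $\Gamma(\cdot)(\Im z)^{-(\Re\alpha-n/2-s+1)}$. This is only good when $\Im z$ is comparable to $\langle z\rangle$, which is the case on the unbounded part of $\gamma_\epsilon$ by (\ref{ineqinut}). For general $z\in Z^+_\epsilon$ with large real part, I will instead deform the $u$-contour slightly, $u\mapsto e^{i\theta\operatorname{sgn}\Re z}u$, to gain $\langle z\rangle$ decay. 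The singular factor $e^{iQ^*/4u}$ stays bounded on compact $x$-sets under a small rotation of the right sign, provided $\Re\alpha$ is large enough to absorb the $u\to0$ behaviour.

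With this in hand, I choose $s_0=n/2+s+p$ (plus a margin). This gives $F^\kappa_\alpha(z)=\pazocal O_{C^s_{\mathrm{loc}}(\R^n)}(\langle z\rangle^{-p})$, uniformly for $\Re\alpha\geq s_0$ on compact $\alpha$-sets. Alternatively, and more safely, I can interpolate between the integer cases of (\ref{regFflat}) using the Bernstein--Sato relation (\ref{diffQ}), which lowers $\alpha$ by one at the cost of $\pazocal O(\langle z\rangle)$. Since $G$ is smooth on $\U$, composing with $G$ preserves $C^s_{\mathrm{loc}}$ bounds (chain rule, compact subsets of $\U$ map to compact sets). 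The first claim then follows, and in the bundle case parallel transport contributes only smooth factors.

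For the second claim, on $\U\setminus\Delta$ we have $G(x_1,x_2)\neq 0$, so $G$ maps $\U\setminus\Delta$ into $\R^n\setminus\{0\}$ as a submersion. I then apply the uniform pullback theorem for families bounded in $\pazocal D'_{\Lambda}$ (the normal-topology continuity of pullback, as in \cite[Lem.~5.2]{Dang:2020jch}) to (\ref{normaltopFj}). This yields
\[
G^*F^\kappa_\alpha(z)=\pazocal O_{\pazocal D'_{G^*\Lambda_{0,+}}}(\langle z\rangle^{-\infty}),
\]
and away from the diagonal $G^*\Lambda_{0,+}$ lies in $\Lambda_{\mathrm F}$ by the same lemma, since it is conormal to the null cone with the Feynman orientation. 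This holds for all $\alpha$ off the poles, by meromorphy and the statement of Proposition \ref{propQk}.

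The main obstacle is the first claim for general non-integer $\alpha$ and general $z\in Z^+_\epsilon$, not just along $\gamma_\epsilon$. The part I expect to need care is getting decay $\langle z\rangle^{-p}$ when $\Im z$ stays bounded while $|\Re z|\to\infty$. If the contour rotation argument fails there, I will restrict to the sector of (\ref{ineqinut}), which is all that is used later.
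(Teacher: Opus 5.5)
Your route differs from the paper's. The paper proves the lemma in one line: it cites \cite[Lem.~6.1]{Dang:2020jch} and asserts that the Lorentzian argument only uses properties shared by $Q_\kappa$.

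\emph{What works.} Your off-diagonal argument pulls back \eqref{normaltopFj} uniformly along the submersion $G\colon \U\setminus\Delta\to\R^n\setminus\{0\}$ and then uses \cite[Lem.~5.2]{Dang:2020jch} to place $G^*\Lambda_{0,+}$ inside $\Lambda_{\mathrm F}$. This is the mechanism behind that citation. Transporting $C^s_{\mathrm{loc}}$ bounds from $\R^n$ to $\U$ by the chain rule is also fine. Your explicit Schwinger--Fresnel bound is correct for every complex $\alpha$ with $\operatorname{Re}\alpha$ large: $x$-derivatives cost $u^{-s}$, and the $u$-integral gives $(\operatorname{Im}z)^{-(\operatorname{Re}\alpha-n/2-s+1)}$. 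It makes the signature-independence transparent. It proves the first claim on the sector \eqref{ineqinut}, hence along $\gamma_\varepsilon$, which is all that Proposition~\ref{propqnrn} and Theorems~\ref{thmRz} and~\ref{thmexpanresol} use. You can also replace ``off the poles'' by all $\alpha\in\C$ in the second claim. Away from the origin the Schwinger form carries no $\Gamma(\alpha+1)$ prefactor. Moreover $\int\phi(x)e^{iQ_\kappa(x)/(4u)}\,dx=\pazocal{O}(u^\infty)$ as $u\to0$ for $\phi$ supported off $0$, so the family is entire there.

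\emph{The gap.} The gap is the extension to all of $Z^+_\varepsilon$, i.e.\ $\operatorname{Im}z$ bounded with $|\operatorname{Re}z|\to\infty$. The rotation $u=re^{i\theta}$ with $\theta\operatorname{Re}z>0$ does not keep the singular factor bounded. One has $|e^{iQ_\kappa(x)/(4u)}|=e^{Q_\kappa(x)\sin\theta/(4r)}$, which grows like $e^{c/r}$ as $r\to0$ on the open cone $\{Q_\kappa(x)\operatorname{Re}z>0\}$. For $0<\kappa<n$ this cone is non-empty near $0$ for either sign of $\operatorname{Re}z$, and no power $r^{\operatorname{Re}\alpha}$ absorbs it. This is not a technicality. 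For $z=\lambda+i\varepsilon$ with $Q_\kappa(x)\lambda>0$, the phase $\lambda u+Q_\kappa(x)/(4u)$ has the real critical point $u_0=\tfrac12\sqrt{Q_\kappa(x)/\lambda}$. Stationary phase then gives $|F^\kappa_\alpha(z,x)|\sim\lambda^{-\frac12\operatorname{Re}(\alpha+1-\frac n2)-\frac14}$. So only polynomial decay at roughly half the sector rate exists, and no contour deformation can do better. Your ``safer'' alternative does not help either: \eqref{diffQ} shifts $\alpha$ only by integers, so it cannot reach non-integer $\alpha$ from \eqref{regFflat}.

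\emph{The fix.} Integrate by parts in $u$ instead, writing $e^{iuz}=(iz)^{-1}\partial_u e^{iuz}$.
\begin{itemize}
\item Each step gains $|z|^{-1}$ and costs at most a factor $\pazocal{O}(u^{-2})$ near $u=0$, coming from $\partial_u e^{iQ_\kappa(x)/(4u)}$.
\item The boundary terms at $u=0$ vanish as long as the power of $u$ stays positive.
\item The bound $e^{-u\operatorname{Im}z}\le e^{-\varepsilon u}$ controls $u\to\infty$.
\end{itemize}
With $\operatorname{Re}\alpha>n/2+s+2p$ this gives $F^\kappa_\alpha(z)=\pazocal{O}_{C^s_{\mathrm{loc}}}(\langle z\rangle^{-p})$ uniformly on $\{\operatorname{Im}z\ge\varepsilon\}$, and hence the lemma as stated. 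Your fallback of restricting to the sector proves only the weaker statement, although that is the one the paper actually uses.
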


\begin{proof}
    This is the content of \cite[Lem.~6.1]{Dang:2020jch}, which remains valid for the pseudo-Riemannian family $\mathbf{F}^{\kappa}_{\alpha}(z)$, just as we highlighted it for the first properties that remain valid from the Lorentzian case in the proof of Proposition \ref{propQk}.
\end{proof}

\subsection{Transport equations and Hadamard parametrix} \label{transeq}

Let $E$ be equipped with a connection $\nabla^E$. We fix $x_0 \in M$, and we use normal coordinates centered at $x_0$. Denote by $V$ the Euler radial vector field. In a radial trivialization of $E$,
write
$$ \mathfrak h = V\bigl(\log |g|^{1/2}\bigr).$$
Let $u_{-1}=0$ and let
\[
    u_j \in C^\infty(\U;E\boxtimes E^*),
\]
called the \emph{Hadamard coefficients} of $P$, be defined recursively by
\begin{equation} \label{urecursion}
    2j u_j + \mathfrak h u_j + 2\nabla_V^E u_j + 2P u_{j-1} = 0,
    \qquad j \in \mathbb{N}_0,
\end{equation}
with
\[
    u_0(x_0,x_0)=\operatorname{Id}_{E_{x_0}}.
\]
The equations are ordinary differential equations along radial geodesics
and have unique smooth solutions after possibly shrinking $\U$. Using the trivialization with the parallel transport along radial geodesics introduced above, the system of transport equations can be written in terms of matrices.

Consider the truncated Hadamard parametrix
\begin{equation} \label{truncHada}
    H_N(z)
    =
    \chi \sum_{j=0}^N u_j \mathbf{F}_j(z) \in \pazocal{D}'(M \times M;E\boxtimes E^*),
\end{equation}
where
$$ \chi \in C^\infty(M\times M)$$
is properly supported, supported in $\U$, and equal to one on a smaller
neighborhood $\U_1$ of the diagonal.

We denote by $q = (x;\xi)$ the points in $T^*M$, and we define $\Lambda_{\chi}$ to be the \textit{truncation of the Feynman wavefront} $\Lambda_{\mathrm{F}}$ from (\ref{unirelation}) where we removed the neighborhood $\U_1$ near the diagonal in $M \times M$. Namely,
\begin{equation} \label{truncFeyn}
   \Lambda'_{\chi} := \Lambda'_{\mathrm{F}} \cap \pi^{-1 }\left( \operatorname{supp}\textnormal{d}\chi \right),
\end{equation}
where $\pi$ is the base projection, i.e.
$$\pi : S^*M \times S^*M \ni \left(\nu(x_1;\xi_1),\nu(x_2;\xi_2)\right) \mapsto (x_1,x_2) \in M \times M,$$
in the notation of Definition \ref{defprimerel}.

\smallskip

\begin{proposition} \label{propqnrn}
For every $N \in \N$, we have the equality
\begin{equation} \label{idequality}
    (P-z)H_N(z)
    =
    \lvert g \rvert^{-\frac{1}{2}} \delta_{\Delta}  \operatorname{Id}_E + \, q_N(z) + r_N(z),
\end{equation}
in the sense of distributions in $\pazocal{D}'(\pazocal{U}, E \boxtimes E^*)$, with
\begin{equation*}
    q_N(z)=\chi(Pu_N)\mathbf{F}_N(z),
    \qquad
    r_N(z)
    =
    [P,\chi]\sum_{j=0}^N u_j\mathbf{F}_j(z),
\end{equation*}
and where $\delta_{\Delta}$ is defined locally by pullback as $\delta_{\Delta} = G^* \delta_0$ and extended globally by a partition of unity.
Given $s \in \R_+, p \in \N$, $N$ may be chosen so that $q_N(z)$ has $C^s$ kernel
and is $O(\langle z\rangle^{-p})$ in the corresponding local Sobolev norms.
The kernel of $r_N(z)$ is supported in $\operatorname{supp}\textnormal{d}\chi$, and 
\begin{equation} \label{rN}
    r_N(z)
    =
    \pazocal O_{\pazocal{D}'_{\Lambda_{\chi}}(M\times M, E \boxtimes E^*)}
    \bigl(\langle z\rangle^{-\infty}\bigr)
\end{equation}
uniformly for $z\in\gamma_\varepsilon$.
\end{proposition}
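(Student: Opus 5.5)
The plan is to apply the Leibniz rule to $(P-z)(\chi \sum_j u_j\mathbf F_j(z))$ and split off three contributions: the commutator with $\chi$, the telescoping Hadamard sum on $\U$, and the last transport term. Concretely, I write $(P-z)\chi = \chi(P-z) + [P,\chi]$, so that
\[
(P-z)H_N(z) = \chi\,(P-z)\sum_{j=0}^N u_j\mathbf F_j(z) + [P,\chi]\sum_{j=0}^N u_j\mathbf F_j(z).
\]
The second term is by definition $r_N(z)$, and its kernel is supported in $\operatorname{supp} d\chi$ since $[P,\chi]$ is a first order operator whose coefficients involve derivatives of $\chi$. For the first term I work in normal coordinates centered at $x_0$, in the radial parallel-transport trivialization. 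There I use the identities for the elementary family from \cite[\S~5.1.4]{Dang:2020jch}, which the Remark after Proposition \ref{propQk} says hold verbatim for $Q_\kappa$:
\[
(\Box-z)\mathbf F_0 = |g|^{-1/2}\delta_\Delta, \qquad (\Box - z)\mathbf F_j = j\mathbf F_{j-1}\ \text{(up to the normalisation of the family)}, \qquad \nabla_{\mathrm{grad}}\,\mathbf F_j \leftrightarrow V\mathbf F_{j} \propto \mathbf F_{j-1}.
\]
These come from the Fourier side identity $\partial_\xi (Q_\kappa-z)^{-\alpha-1}$, together with the fact that the pullback by $G$ intertwines the radial derivative with multiplication by $\tfrac12 Q_\kappa(x)$. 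Writing $P=\Box^\nabla + B$ and expanding $P(u_j\mathbf F_j)$ by Leibniz, the terms regroup as coefficients of $\mathbf F_{j-1}$ equal to $2j u_j + \mathfrak h u_j + 2\nabla^E_V u_j + 2Pu_{j-1}$ (with the paper's normalisation). These vanish by the transport equations (\ref{urecursion}). What survives is $u_0(x,x)|g|^{-1/2}\delta_\Delta = |g|^{-1/2}\delta_\Delta \operatorname{Id}_E$ and the last term $(Pu_N)\mathbf F_N$. Since $\chi=1$ near $\Delta$, multiplying by $\chi$ gives exactly $q_N(z)$ and the delta term, which proves (\ref{idequality}).

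For the estimates, $q_N$ is handled by Lemma \ref{lemFonU}: given $s,p$, I choose $N\geq s_0$. Then $\mathbf F_N(z)=\pazocal O_{C^s_{\rm loc}(\U)}(\langle z\rangle^{-p})$, and multiplication by the smooth, properly supported factor $\chi Pu_N$ preserves this. A $C^s$ kernel with that bound yields the local Sobolev bounds.

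For $r_N$, $[P,\chi]$ is a differential operator with coefficients supported in $\operatorname{supp}d\chi\subset \U\setminus\U_1$, which stays away from $\Delta$. By the second part of Lemma \ref{lemFonU}, each $\mathbf F_j(z)=\pazocal O_{\pazocal D'_{\Lambda_{\mathrm F}}(\U\setminus\Delta)}(\langle z\rangle^{-\infty})$. Differential operators and multiplication by smooth functions do not enlarge wavefront sets and preserve the normal-topology bounds, while the support condition restricts the set to points over $\operatorname{supp}d\chi$. This gives (\ref{rN}) with $\Lambda_\chi$, uniformly on $\gamma_\varepsilon\subset Z^+_{\varepsilon/2}$.

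I expect the main obstacle to be the bookkeeping in the middle step. One has to make sure that the signature enters only through $Q_\kappa$, so that the identity $g^{-1}(d\Gamma,d\Gamma)=4\Gamma$ for $\Gamma = Q_\kappa\circ G$ (the Gauss lemma) holds in any signature. One also has to check that the normalisation constants of $\mathbf F_j$ match the factor $2j$ in (\ref{urecursion}). I would verify this first on flat $\R^{\kappa,n-\kappa}$ directly from (\ref{defflatF}).
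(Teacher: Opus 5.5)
Your proposal is correct and follows essentially the same route as the paper. You split off $r_N(z)=[P,\chi]\sum_j u_j\mathbf F_j(z)$ via $(P-z)\chi=\chi(P-z)+[P,\chi]$, obtain the untruncated identity from the transport equations (which the paper imports directly from \cite[\S~5.3.1]{Dang:2020jch}), and control $q_N$ and $r_N$ with the two parts of Lemma \ref{lemFonU}, exactly as the paper does.

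One small slip in your sketch: the identity needed for the cross term is $d\mathbf F_j\propto \mathbf F_{j-1}\,d(Q_\kappa\circ G)$, so that the Gauss lemma gives $g^{-1}(du_j,d\mathbf F_j)\propto(\nabla^E_V u_j)\,\mathbf F_{j-1}$. The radial derivative $V\mathbf F_j$ itself is proportional to $(Q_\kappa\circ G)\,\mathbf F_{j-1}$, not to $\mathbf F_{j-1}$.
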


\begin{proof}
The identities in normal coordinates are direct pseudo-Riemannian analogues
of \cite[Lem 5.6--Lem 5.7]{Dang:2020jch}. Note that $ \lvert g \rvert^{-\frac{1}{2}} \delta_{\Delta}(x,y)$ is the Schwartz kernel of the identity operator (we have to multiply by $\lvert g \rvert^{-\frac{1}{2}}$ since we integrate against the volume element).

Consider the untruncated Hadamard parametrix $H_N(z) =\sum_{j=0}^N u_j \mathbf{F}_j(z)$ for a moment. Section \cite[\S~5.3.1]{Dang:2020jch} proves, independently of the signature of the metric, that 
\begin{equation} \label{idequalityuntrunc}
    (P-z)H_N(z)
    =
    \lvert g \rvert^{-\frac{1}{2}} \delta_{\Delta}  \operatorname{Id}_E + (Pu_N)\mathbf{F}_N(z),
\end{equation}
as distributions in $\pazocal{D}'(\U,E \boxtimes E^*)$, by using the transport equations (\ref{urecursion}) and the above expression of the Hadamard parametrix. Note that the scalar computations in \cite[\S~5.3.1]{Dang:2020jch} are not affected by the bundle-setting considered here.

Now, considering (\ref{truncHada}), multiplying the parametrix by the cut-off $\chi$ yields an additional term in the above equality. This turns (\ref{idequalityuntrunc}) into 
$$(P-z)H_N(z)
    =
    \lvert g \rvert^{-\frac{1}{2}} \delta_{\Delta}  \operatorname{Id}_E + \, (Pu_N)\mathbf{F}_N(z)\chi + r_N(z),$$
where $r_N = [P,\chi]\sum_{j=0}^N u_j\mathbf{F}_j(z)$ as explained in the proof of \cite[Prop.~6.2]{Dang:2020jch}. This last term vanishes whenever $\chi=1$ or $\chi=0$, since it involves products with derivatives of the diagonal cut-off
function $\chi$. This means that $
r_N(z,x,y)=0$
for $(x,y)\in \U_1$, and also that
$
r_N(z,x,y)=0$
for $(x,y)\notin \U$. This implies that the Schwartz kernel
$
r_N(z)\in \pazocal{D}'(M\times M,E \boxtimes E^*)$
defines a proper operator. Now, observe that $r_N(z)$ is defined in terms of the distributions $$
\mathbf{F}_j(z)
=
\pazocal{O}_{\pazocal{D}'_{\Lambda_{\mathrm{F}}}(\U\setminus\Delta)}
\bigl(\langle z \rangle^{-\infty}\bigr)$$
by Lemma \ref{lemFonU}, which are restricted outside the diagonal. Therefore, $r_N(z)$ verifies
\[
r_N(z)
=
\pazocal{O}_{\pazocal{D}'_{\Lambda_\chi}(M\times M,E \boxtimes E^*)}
\bigl(\langle z \rangle^{-\infty}\bigr),
\]
uniformly for $z\in\gamma_\varepsilon$.
\end{proof}

\begin{remark}
\textnormal{In the scalar case, for $P=\Delta_g+V$, the potential is included in the term \(Pu_{j-1}\) in (\ref{urecursion}), while the model family remains \(F_j(z)\). For a constant mass, Dang--Wrochna instead use
$F_j(z-m^2)$ and mass-independent transport coefficients.} 
\end{remark}

\subsection{The Hadamard parametrix approximates the resolvent} \label{sectlocalexppseudoRcxpow}

Recall the definition of the complex powers. Namely, for $\Re (\alpha) > 0$, consider
\begin{equation*}(P -i\epsilon)^{-\alpha} = \frac{1}{2 \pi i } \int_{\gamma_{\epsilon}} (z -i\epsilon)^{-\alpha}(P - z)^{-1} dz.
\end{equation*}

Now, let $R(z)$ denote the resolvent $\resP$ on the complex contour $\gamma_{\epsilon}$. Formally, if one multiplies (\ref{idequality}) on the left by $R(z)$, it yields the equality
\begin{equation} \label{eqRHeterr}
     R(z) = H_N(z) - R(z)q_N(z) - R(z)r_N(z),
\end{equation}
in which the two error terms present different analytic behaviors. Ultimately, we would like to justify the insertion of equation (\ref{eqRHeterr}) in the representation of the complex powers. To this end, one has to characterize the analytic behaviors of the error terms appearing in (\ref{eqRHeterr}), uniformly in $z \in \gamma_{\varepsilon}$. On the one hand, the error $q_N(z)$ is controlled by taking $N$ large enough and by using global Sobolev estimates. On the other hand, the error $r_N(z)$ is controlled by the uniform wavefront relation (\ref{unirelation}). We want to understand whether these remainders could be as smooth as we want near the diagonal as in the Lorentzian case \cite[Prop.~6.3]{Dang:2020jch}, and, if not, under what conditions this might happen. We will make sense of the equality (\ref{eqRHeterr}) in Theorem \ref{thmRz}. The case of $q_N(z)$ is quite straightforward, as highlighted in the proof of Theorem \ref{thmRz} below. Hence we focus on the error $r_N(z)$ in the following discussion.
 
\subsubsection{The return relation and relative forward convexity} \label{returnSect}

 The error term $r_N(z)$, defined by 
 $$r_N(z)
    =
    [P,\chi]\sum_{j=0}^N u_j\mathbf{F}_j(z) \in \pazocal{D}'(M \times M,E \boxtimes E^*), $$
is supported in the cut-off set $\operatorname{supp} \textnormal{d}\chi$ by Proposition \ref{propqnrn}. The question is whether a singularity emitted from $\operatorname{supp} \textnormal{d}\chi$ can possibly return to the base diagonal. In the Lorentzian setting, under the assumption of at least \emph{strong causality}, we know that after choosing a sufficiently small causally convex normal neighborhood, any null geodesic with endpoints in such a neighborhood remains inside it. Hence a singularity generated in the cut-off set of the local Hadamard parametrix cannot propagate back to the region where the diagonal pullback is computed. Indeed, in a strongly causal Lorentzian manifold one can choose arbitrarily small convex normal neighborhoods that are causally convex in the ambient spacetime; see \cite[Cor.~2 and the discussion following it]{Minguzzi_2014}. Note that, in \cite{Dang:2020jch}, the Lorentzian manifold is assumed to be \emph{globally hyperbolic}, and is therefore strongly causal.

In general, a convex normal neighborhood, whose existence in every signature was recalled in Remark \ref{remconvexnei}, ensures the existence and uniqueness of the short geodesic used in the local Hadamard construction. It does not exclude a second null geodesic which leaves that neighborhood and later returns. Again, the main issue is whether a singularity emitted from $\textnormal{supp } \textnormal{d}\chi$ can return to a full neighborhood of the base diagonal. In mixed signature, the null cone does not in general determine a distinguished future/past decomposition, and hence there is no canonical notion of causal convexity with the same propagation property. Recent higher-signature notions of global hyperbolicity pursue a different geometric framework; see \cite{troubat2025globalhyperbolicityhighersignatures}.
The characteristic flow is still well-defined, but there is no canonical causal future preventing a bicharacteristic from leaving a local coordinate neighborhood and returning to it. 

We therefore replace causal convexity by an explicit geometric no-return condition formulated in terms of the Hamiltonian relation associated with the chosen Feynman orientation. This argument will allow us to perform the diagonal pullback of the complex powers in Theorem \ref{thmexpanresol} (2). 

Now, under an additional hypothesis that we call \textit{relative forward convexity} (\ref{defrelforconv}), we will show that the complex powers have a smooth Hadamard remainder in the neighborhood $\U$ of the diagonal in Theorem \ref{thmexpanresol} (1). This recovers the smooth-remainder expansion in the Lorentzian setting; see \cite[Lem.~7.1]{Dang:2020jch}. The restriction of the complex powers to the diagonal is thus straightforward in this case.

\begin{remark} [Causal convexity in strongly causal Lorentzian manifolds] \label{remstronglycausal}
    \textnormal{Consider a Lorentzian manifold assumed to be globally hyperbolic, or, more generally, strongly causal. In the setting of  \cite[Cor.~2]{Minguzzi_2014}, choose nested neighborhoods $ O \Subset O_1$, with $O$ causally convex and $O_1$ convex normal. Take $U_0 \subset O \times O$, and choose $\chi = 1$ on $O \times O$ with support in $O_1 \times O_1$. The cut-off set then lies outside $O \times O$, and a projected future-directed null bicharacteristic with endpoints in $O$ cannot pass through that set.}
\end{remark}

Let $\Lambda'_{\chi}$ denote the primed relation introduced in Definition \ref{defprimerel},
associated with the cone $\Lambda_{\chi}$ defined in (\ref{truncFeyn}).
Define the return relation
\begin{equation*}
    \pazocal{R}'_{\chi}
    :=
    \Lambda'_{\mathrm{F}} \circ \Lambda'_{\chi}.
\end{equation*}
Note that we can decompose the Feynman wavefront set into 
$$\Lambda'_{\mathrm{F}} =  C'_+ \,\cup \, D'_+$$
where $C_+$ and $D_+$ are subsets of $T^*(M\times M) \setminus o$ defined by 
$$C'_+ :=  \left\{(q_1,q_2) \in S^*M \times S^*M    \, \vert\,  q_1 \succ q_2 \right\}, \qquad D'_+ := \left\{(q,q) \in S^*M \times S^*M\right\},$$
in the sense of Definition \ref{defprimerel}.

\begin{definition}[Relative forward convexity] \label{defrelforconvdef}
Fix a smaller neighborhood $\U_0 \Subset \U_1$ of the diagonal.
The cut-off pair $(\U_0,\chi)$ is \emph{relatively forward convex} if
\begin{equation} \label{defrelforconv}
    \pazocal{R}'_{\chi} \cap \pi^{-1}(\U_0)
    =
    \varnothing,
\end{equation}
Thus no singularity generated in the cut-off set can propagate in the selected Hamilton direction back over $\U_0$.
\end{definition}

Consider $D'_+$ in the primed kernel notation. Avoidance of this set is precisely Hörmander's criterion for restricting a kernel to the base diagonal.

\begin{lemma}[Non-trapping implies diagonal transversality] \label{lemconv}
Under Hypothesis \ref{theH},
\begin{equation*} 
    \pazocal{R}'_{\chi} \cap D'_+
    =
    \varnothing.
\end{equation*}
Thus, diagonal transversality is a consequence of non-trapping.
\end{lemma}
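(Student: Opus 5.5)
We unwind the definitions and argue by contradiction. Suppose $(q,q)\in\pazocal{R}'_{\chi}=\Lambda'_{\mathrm{F}}\circ\Lambda'_{\chi}$ for some $q\in S^*M$. By the definition of composition in Lemma \ref{lemcompo}, there is $q_0\in S^*M$ with $(q,q_0)\in\Lambda'_{\mathrm{F}}$ and $(q_0,q)\in\Lambda'_{\chi}\subset\Lambda'_{\mathrm{F}}$. Using the decomposition $\Lambda'_{\mathrm{F}}=C'_+\cup D'_+$, we split into four cases according to whether each pair lies in $C'_+$ (strict forward flow, $\succ$) or in $D'_+$ (equal points). The aim is to show that every case either contradicts non-trapping or contradicts the truncation by $\pi^{-1}(\operatorname{supp}\textnormal{d}\chi)$.

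\emph{Cases involving $D'_+$ in the second factor.} If $(q_0,q)\in D'_+$, then $q_0=q$ and $(q,q)\in\Lambda'_\chi$. This forces $\pi(q,q)=(x,x)\in\operatorname{supp}\textnormal{d}\chi$, where $x$ is the base point of $q$. But $\chi=1$ on $\U_1\supset\Delta$, so $\textnormal{d}\chi=0$ near the diagonal and $\operatorname{supp}\textnormal{d}\chi\cap\Delta=\varnothing$. This is a contradiction, whatever the first factor is.

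\emph{Remaining cases.} Now $(q_0,q)\in C'_+$, so $q_0\succ q$, and also $\pi(q_0,q)\in\operatorname{supp}\textnormal{d}\chi$.
\begin{itemize}
\item If $(q,q_0)\in D'_+$, then $q=q_0$ and $q\succ q$. This means $\Phi_t(q)=q$ for some $t\neq0$, so the bicharacteristic through $q$ is periodic.
\item If $(q,q_0)\in C'_+$, then $q\succ q_0\succ q$. Concatenating flow times gives $\Phi_{t_1+t_2}(q)=q$ with $t_1,t_2$ of the same sign, hence $t_1+t_2\neq0$, and again the orbit is closed.
\end{itemize}

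The key step, and the main obstacle, is to exclude closed orbits of $\overline{H}_p$ in $\overline{\Sigma}_0$ over the interior using Definition \ref{Pnontrap}. Here $q\in S^*M$ lies over the interior, so $q\notin L_\pm\subset\partial{}^{\mathrm{sc}}T^*\overline{M}$. By non-trapping, the bicharacteristic through $q$ tends to $L_+$ at one end and to $L_-$ at the other. A periodic orbit is compact and stays at positive distance from base infinity, so it cannot converge to $L_\pm$, which is a contradiction. One could also argue with the escape function: by Definition \ref{defsourcesink} c), $\rho$ is strictly monotone near $L_\pm$, and the flow limits force $\Phi_t(q)\to\partial$ as $|t|\to\infty$, which is incompatible with $\Phi_{nT}(q)=q$ for all $n$.

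Some care is needed with the orientation convention, namely that $\succ$ is genuinely a strict order along a single orbit, and with the fact that $\Phi_t$ preserves the interior and its closure. Both follow from $\overline{H}_p$ being a b-vector field tangent to the boundary. With these checked, all cases are excluded, so $\pazocal{R}'_\chi\cap D'_+=\varnothing$.
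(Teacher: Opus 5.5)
Your proof is correct and follows the paper's argument. The paper also disposes of the $D'_+$ contributions by noting that $\Lambda'_\chi$ lies over $\operatorname{supp}\textnormal{d}\chi$, which is disjoint from the diagonal (so in fact $\Lambda'_\chi\subset C'_+$), and then shows that $(q,q)\in C'_+\circ\Lambda'_\chi$ forces $\Phi_{s+t}(q)=q$ with $s,t>0$. That is a periodic bicharacteristic over the interior which cannot tend to $L_\pm$, contradicting non-trapping. Your four-case split is just finer bookkeeping of the same two steps; the case $q=q_0$ with $q_0\succ q$ is already excluded by the truncation, since then $(q,q)\in\Lambda'_\chi$.
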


\begin{proof}
First,
\[
    \Lambda'_{\chi} \cap D’_+
    =
    \varnothing,
\]
since $\textnormal{d}\chi$ vanishes on $\U_1$ and hence the base support of
$\Lambda'_{\chi}$ is disjoint from the diagonal. Suppose now that
\[
    (q,q) \in C'_+ \circ \Lambda'_{\chi}.
\]
Then there is a phase point $q_1$ and positive times $s,t>0$ such that
\[
    q_1 = \Phi_s(q),
    \qquad
    q = \Phi_t(q_1).
\]
The times are strictly positive because the cut-off relation lies off the
diagonal. Consequently
\[
    \Phi_{s+t}(q)=q,
\]
so the null bicharacteristic through $q$ is periodic and hence trapped. Indeed, it remains in a compact subset of the interior fiber infinity $S^*M$ and cannot tend to either radial set \(L_+\) or \(L_-\), that lie within $\partial {}^{\mathrm{sc}}T^* \Mb$, which is base infinity in the scattering cotangent bundle. 
This contradicts Hypothesis \ref{theH}.
\end{proof}

\begin{remark}
    \textnormal{Definition \ref{Pnontrap} about non-trapping does include bicharacteristics staying within $L_+$ or $L_-$. However, such an option does not save the null bicharacteristic considered in the above proof, because that case only applies if the initial point is already in the radial set $L_{\pm} \subset \partial {}^{\mathrm{sc}}T^* \Mb$, which is at base infinity.}
\end{remark}

To lighten the notation, we omit the bundle \(E\boxtimes E^*\) from the notation whenever it is clear from the context. Thus, we write \(\pazocal D'(\U)\) in place of \(\pazocal D'(\U;E\boxtimes E^*)\), and $C^s(\U)$ instead of $C^s(\U;E\boxtimes E^*)$.

\begin{theorem} \label{thmRz}
Assume Hypothesis \ref{theH}. For every $s \in \R_+,p \in \N$, one can choose $N$ so that, uniformly on $\gamma_\varepsilon$,
    \begin{equation*}
        R(z)
        =
        \chi \sum_{k=0}^{N} u_k F_k(z) + E_N(z), \qquad E_N(z) = -R(z)r_N(z) -R(z)q_N(z),
\end{equation*}
in the sense of operators $\pazocal{O}_{C^{\infty} \to C^{\infty}}(\langle z \rangle^{-1})$, and the second error $ E_{N,2}(z) = - R(z)q_N(z)$ satisfies $$E_{N,2}(z) = \pazocal{O}_{C^s(\U)}\bigl(\langle z\rangle^{-p}\bigr).$$
If $(\U_0,\chi)$ is relatively forward convex, the first error term $E_{N,1}(z) = -R(z)r_N(z)$, and hence the whole error $E_N(z)$, verify
     $$ E_N(z)
        =
        \pazocal{O}_{C^s(\U_0)}\bigl(\langle z\rangle^{-p}\bigr).$$

\end{theorem}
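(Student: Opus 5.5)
The plan is to first justify the operator identity, and then to handle the two error terms separately: $q_N$ by global Sobolev mapping properties, and $r_N$ by propagation of the uniform wavefront set through the composition calculus.

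\emph{Step 1: the operator identity.} Proposition \ref{propqnrn} gives $(P-z)H_N(z)=\mathrm{Id}+q_N(z)+r_N(z)$ as operators, since $|g|^{-1/2}\delta_\Delta\operatorname{Id}_E$ is the kernel of the identity. All terms are properly supported, because $\chi$ is properly supported and so is $\operatorname{supp}\mathrm{d}\chi$. To multiply on the left by $R(z)$, I use that $H_N(z)$, $q_N(z)$ and $r_N(z)$ map $C^\infty_{\mathrm c}$ into compactly supported distributions lying in some $H^{s}_{\mathrm{sc}}$ with compact support. Such sections belong to every Feynman space $\pazocal{Y}^{s,\ell}_{\mathrm{sc}}$. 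Moreover $H_N(z)f\in\pazocal{X}^{s,\ell}_{\mathrm{sc}}(z)$, so $R(z)(P-z)H_N(z)=H_N(z)$ by injectivity of $P-z$ on $\pazocal{X}$ (Proposition \ref{propfredreal}, with the contour avoiding poles). This yields $R(z)=H_N(z)-R(z)q_N(z)-R(z)r_N(z)$. The bound $\pazocal{O}_{C^\infty\to C^\infty}(\langle z\rangle^{-1})$ follows from the $\pazocal{O}(\langle z\rangle^{-1})$ bound of $R(z)$ in $\pazocal{B}(\pazocal{Y},\pazocal{X})$. It also uses the fact that locally $\pazocal{X}\subset H^{s}_{\mathrm{loc}}$ and compactly supported $H^s$ sits in $\pazocal{Y}$; polynomial growth of $H_N,q_N,r_N$ in $z$ is absorbed after choosing $N$.

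\emph{Step 2: the term $E_{N,2}$.} By Proposition \ref{propqnrn}, for $N$ large, $q_N(z)$ has a compactly supported $C^{s'}$ kernel and is $\pazocal{O}(\langle z\rangle^{-p-1})$. Hence $q_N(z)$ maps $H^{-s'}_{\mathrm c}$ into $H^{s'}_{\mathrm c}\subset\pazocal{Y}^{s',\ell}_{\mathrm{sc}}$ with that decay. Composing with $R(z):\pazocal{Y}^{s',\ell}\to\pazocal{X}^{s',\ell}\subset H^{s'+1}_{\mathrm{loc}}$, which loses only $\langle z\rangle$ at most, gives a kernel in $H^{s'}_{\mathrm{loc}}\hat\otimes H^{s'}$ locally. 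Sobolev embedding with $s'\gg s+n$ then gives $E_{N,2}=\pazocal{O}_{C^s(\U)}(\langle z\rangle^{-p})$.

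\emph{Step 3: the term $E_{N,1}$ under relative forward convexity; this is the main obstacle.} By \eqref{rN} and Lemma \ref{lemusforrN}, $\wf'^{(s)}_{\langle z\rangle^{-L}}(r_N)\subset\Lambda'_\chi$ for every $L$. By Proposition \ref{resFeyn}, $\wf'^{(s)}_{\langle z\rangle^{-1}}(R)\subset\Lambda'_{\mathrm F}$. Lemma \ref{lemcompo} then gives
\[
\wf'^{(s)}_{\langle z\rangle^{-L-1}}(R\,r_N)\subset \Lambda'_{\mathrm F}\circ\Lambda'_\chi=\pazocal{R}'_\chi .
\]
The difficulty is that $R(z)$ is not properly supported, so Lemma \ref{lemcompo} does not apply directly. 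I would split $R=\psi R\psi+(\text{rest})$ with cut-offs near $\overline{\U_0}$ and the projection of $\operatorname{supp}\mathrm{d}\chi$. The far pieces are handled via the global $\pazocal{B}(\pazocal{Y},\pazocal{X})$ bounds, which are uniform because only the base points in $\U_0$ matter; compositions with smoothing cut-off tails are smooth there. Once this is done, \eqref{defrelforconv} says that $\pazocal{R}'_\chi$ has empty intersection over $\U_0$. So for every $(q_1,q_2)$ over $\U_0$ there are elliptic $C_1,C_2$ with $C_1 R r_N C_2^*=\pazocal{O}_{H^*\to H^{*+s'}}(\langle z\rangle^{-L-1})$. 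A microlocal partition of unity then gives $\pazocal{O}_{H^{-s'}\to H^{s'}}$ on $\U_0$, hence $E_{N,1}=\pazocal{O}_{C^s(\U_0)}(\langle z\rangle^{-p})$ by Sobolev embedding. Combined with Step 2, this gives the same bound for $E_N$.
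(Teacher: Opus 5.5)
Your proposal is correct and follows essentially the same route as the paper. Like the paper, you left-multiply the identity of Proposition~\ref{propqnrn} by $R(z)$, control $R(z)q_N(z)$ by the global Sobolev bounds of the resolvent, and combine Lemma~\ref{lemusforrN}, Proposition~\ref{resFeyn} and Lemma~\ref{lemcompo} to get $\wf'(R\,r_N)\subset\pazocal{R}'_\chi$, which relative forward convexity makes empty over $\U_0$. The paper leaves two points implicit that you make explicit: injectivity of $P-z$ on $\pazocal{X}$ to justify the left multiplication, and localizing $R(z)$ before invoking Lemma~\ref{lemcompo}, which the paper applies to the non-properly supported $R(z)$ without comment. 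That localization is in fact exact, since $C_1R\,r_NC_2^*=C_1\psi_1R\psi_2\,r_NC_2^*$ for suitable $\psi_i\in C^\infty_{\mathrm c}(M)$, so no ``far pieces'' need to be estimated.
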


\begin{proof} 
Let $s \in \R_+$, $p \in \N$, and set
\[
    E_{N,1}(z)=-R(z)r_N(z) ,  \quad
    E_{N,2}(z)=-R(z)q_N(z),
\]
where we put $E_N(z) = E_{N,1}(z) + E_{N,2}(z)$. First, recall that 
$R(z) = \pazocal{O}_{C^{\infty} \to C^{\infty}}(\langle z \rangle^{-1})$
along $\gamma_\varepsilon$ by Proposition \ref{propfredreal}. Now, we use that Lemma \ref{lemFonU} concerning $\mathbf{F}_{\alpha}(z)$ makes $q_N = \chi(Pu_N)\mathbf{F}_N(z)$ arbitrarily regular and rapidly decreasing in
$z$ as $N$ grows, i.e. $q_N = \pazocal{O}_{C^s(\U)}\bigl(\langle z\rangle^{-p}\bigr)$. The composition $E_{N,2}(z) = -R(z)q_N(z)$ makes sense by the operator wavefront calculus, i.e. Lemma \ref{lemcompo}, and the global estimate in
\cite[Prop.~3.6]{dang2024diracoperatorslocalinvariants} then gives the required regularity and decay, namely $E_{N,2}(z)
        =
        \pazocal{O}_{C^s(\U)}\bigl(\langle z\rangle^{-p}\bigr)$.

For $E_{N,1}(z) = -R(z)r_N(z)$, first observe that we have $\wf'^{(s)}_{\langle z \rangle^{-\infty}}\left(r_N(z)\right) \subset \Lambda'_{\chi}$ by Lemma \ref{lemusforrN}, since we know that $r_N(z) = \pazocal{O}_{\pazocal{D}'_{\Lambda_\chi}}
\left(\langle z \rangle^{-\infty}\right)$ by (\ref{rN}). Then, together with $\wf'^{(s)}_{\langle z \rangle^{-1}}\left(R(z)\right) \subset \Lambda'_{F}$ by relation (\ref{unirelation}), Lemma \ref{lemcompo} yields
\begin{equation*} 
    \wf'^{(s)}_{\langle z \rangle^{-\infty}}\left(E_{N,1}(z)\right)
    \subset
    \pazocal{R}'_{\chi} = \Lambda'_{\mathrm{F}} \circ \Lambda'_{\chi}.
\end{equation*}
Now, identity (\ref{idequality}) reads, written here in the sense of operators, 
\begin{equation*} 
    (P-z) H_N(z)
    =
    \Id_E + \, q_N(z) + r_N(z).
\end{equation*}
One can legitimately multiply on the left by the resolvent, which yields
$$ R(z)
        =
        \chi \sum_{k=0}^{N} u_k F_k(z) + E_N(z),$$
in the sense of operators $\pazocal{O}_{C^{\infty} \to C^{\infty}}(\langle z \rangle^{-1})$.

If $(\U_0,\chi)$ is relatively forward convex, every phase point over $\U_0$ is disjoint from the return relation by Definition \ref{defrelforconvdef}, namely $$\pazocal{R}'_{\chi} \cap \pi^{-1}(\U_0)
    =
    \varnothing.$$ Knowing that $\wf'^{(s)}_{\langle z \rangle^{-\infty}}\left(E_{N,1}(z)\right)
    \subset
    \pazocal{R}'_{\chi}$, it thus proves that $E_{N,1}(z) = \pazocal{O}_{C^s(\U_0)}\bigl(\langle z\rangle^{-p}\bigr)$. Combining the two errors shows that, under relative forward convexity, we have 
$$R(z)
        =
        \chi \sum_{k=0}^{N} u_k F_k(z) + E_N(z), \qquad E_N(z) = \pazocal{O}_{C^s(\U_0)}\bigl(\langle z\rangle^{-p}\bigr),$$
which holds in the sense of distributions in $\pazocal{D}'(\U_0)$.
\end{proof}

\begin{remark} [Non-trapping]
    \textnormal{As we already said at the beginning of the section, non-trapping excludes the return to the same \textit{phase point}, which constitutes Lemma \ref{lemconv}. However, it does not exclude the possibility of a return to the same or nearby base point with a different covector, which would obstruct the near-diagonal smoothness of the remainder (while not actually affecting the part responsible for the local residues).}
\end{remark}

\begin{remark}[Pseudoconvexity and disprisonment]
\upshape
The classical notions of geodesic pseudoconvexity and disprisonment provide a natural framework for controlling excursions of bicharacteristics and global solvability; see
\cite{BeemParker1989,DelRiegoParker1995}. Recall that geodesic
pseudoconvexity says, roughly, that for every compact set \(K\subset M\) there
exists a larger compact set \(K'\subset M\) such that every geodesic segment
with endpoints in \(K\) is contained in \(K'\). Disprisonment, on the other
hand, excludes inextendible geodesics entirely contained in a compact subset of
\(M\).

These conditions are not the same as the relative forward convexity condition
\eqref{defrelforconv}. First, pseudoconvexity and disprisonment are global
conditions on the base projections of geodesics, whereas
\eqref{defrelforconv} is a geometric condition on the Hamilton relation in
phase space. Second, relative forward convexity is tied to the chosen
Feynman orientation: it only forbids propagation in the selected forward
Hamilton direction. 
\end{remark}

\begin{example}[Exact pseudo-Euclidean model]
\textnormal{Let $ p_0(\xi)=Q_\kappa(\xi)$
be the exact pseudo-Euclidean principal symbol. Since \(p_0\) is independent of
the base variable, the Hamilton vector field is
$$
    H_{p_0}=(\partial_\xi p_0)\cdot \partial_x, $$
and the covector \(\xi\) is constant along bicharacteristics. Thus, if $v=\partial_\xi p_0(\xi)$ the projected null bicharacteristics are affine lines.}

\textnormal{Choose an auxiliary Euclidean norm \(|\cdot|\), and let the cut-off $\chi$ be radial in
the difference variable. More precisely, choose \(0<r_1<r_2\) such that
\[
    \operatorname{supp} d\chi
    \subset
    \{(z,y): r_1\leq |z-y|\leq r_2\},
    \qquad
    U_0\subset \{(x,y): |x-y|<r_1\}.
\]
We claim that the pair \((U_0,\chi)\) is relatively forward convex. Indeed, suppose that a composed forward relation passes from $y$ to $z$
and then from $z$ to $x$. Since the canonical relation is composed with
matching intermediate covector, the two segments lie on the same projected
bicharacteristic. Hence there exist \(s,t\geq0\) and a constant null velocity
\(v=\partial_\xi p_0(\xi)\) such that
\[
    z-y=t v,
    \qquad
    x-z=s v.
\]
Since the first segment meets $\operatorname{supp} d\chi$, we have
\[
    |t v|=|z-y|\geq r_1,
\]
while $
    x-y=(s+t)v$
verifies
\[
    |x-y|=|(s+t)v|=(s+t)|v|\geq t|v|=|z-y|\geq r_1.
\]
This contradicts \((x,y)\in U_0\), for which \(|x-y|<r_1\). Thus no singularity generated in $\operatorname{supp} d\chi$ can propagate in the selected forward Hamilton direction back over \(\U_0\), and the cut-off pair is relatively forward convex.}
\end{example}

\subsection{Local expansion of the pseudo-Riemannian complex powers} \label{sectlocexpcxpow}

Recall the definition of the complex powers, namely 
\begin{equation} \label{cxcontourint} (P -i\epsilon)^{-\alpha} = \frac{1}{2 \pi i } \int_{\gamma_{\epsilon}} (z -i\epsilon)^{-\alpha}(P - z)^{-1} dz
\end{equation}
for $\mathrm{Re}(\alpha) > 0$. We first prove a preliminary lemma about the Gamma function that will be used throughout the section.

\begin{lemma} \label{lemgamma}
    For any $k \in \N_0$, $\alpha \in \C \setminus \Z$, the Gamma function $\Gamma$ verifies 
    $$\frac{1}{\Gamma(\alpha+k)} = (-1)^k \frac{\Gamma(1 - \alpha - k)}{\Gamma(\alpha) \Gamma(1- \alpha) }.$$
\end{lemma}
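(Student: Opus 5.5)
The plan is to derive the identity from the two classical Gamma-function identities: the functional equation $\Gamma(w+1)=w\Gamma(w)$ and Euler's reflection formula $\Gamma(w)\Gamma(1-w)=\pi/\sin(\pi w)$. Both are valid for $w\notin\Z$. Since $\alpha\in\C\setminus\Z$, none of $\alpha+k$, $1-\alpha-k$, $\alpha$, $1-\alpha$ is an integer. Hence every Gamma value that appears is finite and nonzero, and all divisions below are legitimate.

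First apply the reflection formula with $w=\alpha+k$:
\[
\frac{1}{\Gamma(\alpha+k)}=\frac{\sin(\pi(\alpha+k))}{\pi}\,\Gamma(1-\alpha-k).
\]
Next use $\sin(\pi\alpha+\pi k)=(-1)^k\sin(\pi\alpha)$. Finally apply the reflection formula once more, now with $w=\alpha$:
\[
\frac{\sin(\pi\alpha)}{\pi}=\frac{1}{\Gamma(\alpha)\Gamma(1-\alpha)}.
\]
Substituting the last two facts into the first display gives exactly
\[
\frac{1}{\Gamma(\alpha+k)}=(-1)^k\frac{\Gamma(1-\alpha-k)}{\Gamma(\alpha)\Gamma(1-\alpha)}.
\]

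As a cross-check, one can also argue by induction on $k$ without trigonometry. For $k=0$ the identity is trivial. For the step from $k$ to $k+1$, use $\Gamma(\alpha+k+1)=(\alpha+k)\Gamma(\alpha+k)$ together with $\Gamma(1-\alpha-k)=(-\alpha-k)\Gamma(-\alpha-k)$. The factors $(\alpha+k)$ cancel, and the minus sign supplies the extra $(-1)$.

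The only point needing care is the hypothesis $\alpha\notin\Z$, which ensures that no Gamma factor has a pole. There is no real obstacle. The identity then extends meromorphically in $\alpha$, which is the form in which it will be used later for the residue computations.
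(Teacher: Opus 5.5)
Your proof is correct and follows the paper's argument exactly: the reflection formula at $\alpha+k$, the identity $\sin(\pi(\alpha+k))=(-1)^k\sin(\pi\alpha)$, then the reflection formula again at $\alpha$. Your inductive cross-check via the functional equation is also valid, and it gives a trigonometry-free alternative.
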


\begin{proof}
   Euler's reflection formula says 
\begin{equation*} \label{eulerformula}
    \Gamma(a)\Gamma(1- a) = \frac{\pi}{\sin(\pi a)}
\end{equation*}
for any $a \notin \Z$. Also,
$$\Gamma(\alpha+k) \Gamma(1 - \alpha - k) = \frac{\pi}{\sin(\pi (\alpha + k))},$$
where $\sin(\pi (\alpha+k)) = (-1)^k \sin(\pi \alpha)$, hence  
$$\frac{1}{\Gamma(\alpha+k)} = \frac{\Gamma(1 - \alpha - k) \sin(\pi (\alpha+k))}{\pi} = (-1)^k \frac{\Gamma(1 - \alpha - k) \sin(\pi \alpha)}{\pi}.$$
Using again the reflection formula, we get 
$$\frac{1}{\Gamma(\alpha+k)} = (-1)^k \frac{\Gamma(1 - \alpha - k)}{\Gamma(\alpha) \Gamma(1- \alpha) }.$$
\end{proof}

Recall that any distribution $K \in \pazocal{D}'(M \times M, E \boxtimes E^*)$ that has a well-defined diagonal pullback  defines a single-variable distribution $\iota_\Delta^* K \in \pazocal{D}'\left(M, \mathrm{End}(E)\right)$; see relation (\ref{relpullbackdiag}). Recall also that we omit the bundle $E \boxtimes E^*$ from the notation.

The following theorem provides a local expansion of the complex powers which, after restriction to the diagonal, allows us to compute their residues.

\begin{theorem} \label{thmexpanresol}
    Assume Hypothesis \ref{theH}. For every $s \in \R_+$, $p \in \N$ there exists $N \in \N$ such that the following properties hold.
    \begin{enumerate} 
        \item If $(\U_0,\chi)$ is relatively forward convex, then 
        \begin{equation} \label{smoothexpcxpowers}
            \Pa = \frac{1}{\Gamma(\alpha)} \sum_{k=0}^{N} u_k \mathbf{F}_{k + \alpha - 1}(i\epsilon) + R_N(i\epsilon,\alpha)
        \end{equation}
        as distributions on $\U_0$, where $R_N(i\epsilon,\alpha)$ is in $C^s(\U_0)$ and holomorphic for $\Re(\alpha) > -p$.
        \item Without relative forward convexity, the diagonal pullback exists  from the initial half-plane where the diagonal pullback of the integrated Hadamard terms is defined and verifies 

\begin{equation} \label{expcxpowerson diag}
    \iota_\Delta^*(P-i\varepsilon)^{-\alpha}
    =
    \frac{1}{\Gamma(\alpha)}
    \sum_{k=0}^{N}
    \iota_\Delta^*
    \left(
        u_k \mathbf{F}_{\alpha+k-1}(i\varepsilon)
    \right)
    +
    \rho_N(i\varepsilon,\alpha) \in \pazocal{D}'\left(M, \mathrm{End}(E)\right),
\end{equation}
where $\rho_N(i\varepsilon,\alpha)$ is holomorphic for
$\operatorname{Re}(\alpha)>-p$ with values in any prescribed local
distributional Sobolev space.

The analogous lower half-plane statements are obtained by replacing
$i\varepsilon$ by $-i\varepsilon$.
    \end{enumerate}
\end{theorem}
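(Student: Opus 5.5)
We insert the resolvent identity of Theorem \ref{thmRz} into the contour formula (\ref{cxcontourint}) and treat the Hadamard part and the error part separately. For the Hadamard part, the key identity is the contour integral of the elementary family,
\[
\frac{1}{2\pi i}\int_{\gamma_\varepsilon}(z-i\varepsilon)^{-\alpha}\mathbf F_k(z)\,dz=\frac{\Gamma(k+1)}{\Gamma(\alpha)\,\Gamma(k+\alpha)}\cdot\frac{\Gamma(k+\alpha)}{\Gamma(k+1)}\,\mathbf F_{k+\alpha-1}(i\varepsilon)=\frac{1}{\Gamma(\alpha)}\mathbf F_{k+\alpha-1}(i\varepsilon).
\]
We prove it on the Fourier side from (\ref{defflatF}). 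For $\mathrm{Re}(\alpha)$ large, we use the scalar identity
\[
\frac{1}{2\pi i}\int_{\gamma_\varepsilon}(z-i\varepsilon)^{-\alpha}\,\Gamma(k+1)\,(Q_\kappa(\xi)-z)^{-k-1}\,dz=\frac{\Gamma(k+\alpha)}{\Gamma(\alpha)}\,(Q_\kappa(\xi)-i\varepsilon)^{-k-\alpha}.
\]
This follows by closing the contour around the real point $Q_\kappa(\xi)$ and applying Cauchy's formula for the $k$-th derivative. Here $\frac{d^k}{dz^k}(z-i\varepsilon)^{-\alpha}$ produces $(-1)^k\Gamma(\alpha+k)/\Gamma(\alpha)$, and the sign is absorbed by the orientation; Lemma \ref{lemgamma} can be used to organise these constants. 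Fubini is justified by the decay $\langle z\rangle^{-\mathrm{Re}\,\alpha}$ along $\gamma_\varepsilon$ from (\ref{ineqinut}), together with the bound $|Q_\kappa(\xi)-z|\gtrsim\langle z\rangle$ there. We then pull back by $G$, multiply by $\chi u_k$, and extend meromorphically in $\alpha$ using Proposition \ref{propQk}(1).

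For the remainder we set
\[
R_N(i\varepsilon,\alpha)=\frac{1}{2\pi i}\int_{\gamma_\varepsilon}(z-i\varepsilon)^{-\alpha}E_N(z)\,dz .
\]
For part (1), relative forward convexity and Theorem \ref{thmRz} give $E_N(z)=\mathcal{O}_{C^s(\mathcal{U}_0)}(\langle z\rangle^{-p'})$ for any chosen $p'$. We take $p'=p+2$. Then $|(z-i\varepsilon)^{-\alpha}|\lesssim\langle z\rangle^{-\mathrm{Re}\,\alpha}$ on the unbounded part of the contour, so the integral converges absolutely in $C^s(\mathcal{U}_0)$ for $\mathrm{Re}\,\alpha>-p$. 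Holomorphy in $\alpha$ follows by differentiating under the integral sign (Morera). Since $\chi=1$ on $\mathcal{U}_0\subset\mathcal{U}_1$, this yields (\ref{smoothexpcxpowers}).

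For part (2), $E_{N,2}$ is treated exactly as above, using Theorem \ref{thmRz} on $\mathcal{U}$. For $E_{N,1}=-R(z)r_N(z)$, Theorem \ref{thmRz} gives $\mathrm{WF}'^{(s)}_{\langle z\rangle^{-\infty}}(E_{N,1})\subset\mathcal{R}'_\chi$, and Lemma \ref{lemconv} gives $\mathcal{R}'_\chi\cap D'_+=\varnothing$. In unprimed form, this means the uniform wavefront set of the kernel avoids the conormal bundle $N^*\Delta=\{(x,x;\xi,-\xi)\}$. Hörmander's pullback theorem then defines $\iota_\Delta^*E_{N,1}(z)$ continuously in the normal topology $\mathcal{D}'_{\Gamma}$, with $\Gamma$ a closed cone containing $\mathcal{R}_\chi$ and disjoint from $N^*\Delta$. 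The result is uniform in $z$ with the $\langle z\rangle^{-\infty}$ decay, at least in a local Sobolev space $H^{-m}_{\mathrm{loc}}$ for some $m$. Integrating against $(z-i\varepsilon)^{-\alpha}$ gives $\rho_N$, holomorphic for $\mathrm{Re}\,\alpha>-p$.

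The left-hand side is handled by the same splitting. For $\mathrm{Re}\,\alpha>n/2$ each Hadamard term $\mathbf F_{\alpha+k-1}$ is continuous near $\Delta$ by Lemma \ref{lemFonU}, so its restriction is defined. The full kernel of $(P-i\varepsilon)^{-\alpha}$ is the sum of these terms and the integrated errors, so its pullback is defined and (\ref{expcxpowerson diag}) holds. The right-hand side then gives the continuation.

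The main obstacle is the uniformity step in part (2). Lemma \ref{lemcompo} yields a \emph{uniform operator} wavefront bound, whereas Hörmander's criterion requires a bound on the kernel in $\mathcal{D}'_\Gamma$ with seminorms controlled by $\langle z\rangle^{-L}$. I would first try to convert between them by testing with $C_1E_{N,1}C_2^*$, for $C_i$ microlocalising off $\mathcal{R}'_\chi$. The order-$s$ smoothing, for every $s$, then gives kernel bounds in the sense of Lemma \ref{lemusforrN} read backwards. After that, I would apply the continuity of pullback in the Hörmander topology uniformly in $z$.
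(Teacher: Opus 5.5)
Your proposal is correct. Its skeleton matches the paper's: Theorem \ref{thmRz} is inserted into the contour formula, part (1) follows from the $\pazocal{O}_{C^s(\U_0)}(\langle z\rangle^{-p-2})$ bound, and $E_{N,2}$ and the Hadamard terms for large $\Re\alpha$ are handled trivially. The route differs in two places.

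\emph{The Hadamard terms.} The paper quotes Dang--Wrochna's Lemma 4.7, whose coefficient $(-1)^k\Gamma(1-\alpha)/(\Gamma(1-\alpha-k)\Gamma(\alpha+k))$ it then reduces to $1/\Gamma(\alpha)$ by the reflection formula of Lemma \ref{lemgamma}. That reduction is a priori valid only for $\alpha\notin\Z$. Your Cauchy-formula computation on the Fourier side gives $\Gamma(k+\alpha)/\Gamma(\alpha)$ directly, so the lemma is not needed. Do the Fubini step after pairing with a Schwartz function: $(Q_\kappa(\xi)-z)^{-k-1}$ is in general not integrable in $\xi$, and in mixed signature it does not even decay along the null cone.

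\emph{The order of operations in part (2).} You pull back $E_{N,1}(z)$ for each $z$ and then integrate. That order is exactly what creates the obstacle you flag: you need uniform $\langle z\rangle^{-L}$-weighted bounds in $\pazocal{D}'_\Gamma$ and uniform continuity of $\iota_\Delta^*$. The paper integrates first. For fixed $(q_1,q_2)\notin\pazocal{R}'_\chi$, the map $z\mapsto(z-i\varepsilon)^{-\alpha}C_1E_N(z)C_2^*$ is Bochner integrable in the Banach space $\pazocal{B}(H^\ell_{\mathrm c},H^{\ell+s}_{\mathrm{loc}})$. The fixed operators $C_1,C_2^*$ come out of the integral, which yields $\mathrm{WF}'(R_N(i\varepsilon,\alpha))\subset\pazocal{R}'_\chi$. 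Hörmander's theorem and Lemma \ref{lemconv} are then applied once, to a single distribution.

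\emph{What each route buys.} The paper's route uses only the operator bounds that the uniform wavefront set directly provides. It needs the operator-to-kernel wavefront conversion only for a fixed kernel, so it bypasses your main obstacle. Your route, in turn, makes holomorphy of $\rho_N$ on $\Re\alpha>-p$ transparent: the integrand is a rapidly decaying $H^{-m}_{\mathrm{loc}}$-valued family against a holomorphic weight. In the paper's order this still requires holomorphy of $\alpha\mapsto R_N$ in $\pazocal{D}'_\Gamma$ plus continuity of the pullback, which the paper leaves implicit. Your separation of $E_{N,1}$ from $E_{N,2}$ is also slightly cleaner than the paper's treatment of $E_N$ as a whole. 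For fixed $N$, only $E_{N,1}$ satisfies the wavefront bound for every $s$, while $E_{N,2}$ is merely $C^s$.
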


In any case, the on-diagonal restriction of the complex powers is well-defined, and its meromorphic continuation, proven in the next section, will be denoted by 
\begin{equation} \label{defzetafunc}
   \alpha \mapsto \zeta^\kappa_{g,\varepsilon}(\alpha) := \iota_\Delta^*(P-i\varepsilon)^{-\alpha} \in \pazocal{D}'\left(M;\mathrm{End}(E)\right) 
\end{equation}
and will be called the \emph{pseudo-Riemannian spectral zeta function density} of $(M, g)$ of signature $(\kappa,n-\kappa)$. 

\begin{proof}
    We use the relation between the resolvent $R(z)$ and the Hadamard parametrix $H_N(z)$ proven in Theorem \ref{thmRz}. One can insert it in the complex powers representation $(\ref{cxcontourint})$ using the decay along $\gamma_{\epsilon}$ of the different terms appearing in Theorem \ref{thmRz}. For $\Re(\alpha) > 0$, it yields
\begin{align*} \label{messs}
    (P -i\epsilon)^{-\alpha} 
            &= \frac{1}{2 \pi i }  \int_{\gamma_{\epsilon}} (z -i\epsilon)^{-\alpha} R(z) \,dz  \\
            &= \chi \sum_{k = 0}^{N} u_k  \,  \frac{1}{2 \pi i } \int_{\gamma_{\epsilon}} (z -i\epsilon)^{-\alpha} \textbf{F}_k(z)\, dz  + \frac{1}{2 \pi i} \int_{\gamma_{\epsilon}} (z -i\epsilon)^{-\alpha} E_{N}(z) \,  dz.
\end{align*}
Now, by Lemma 4.7 in \cite{Dang:2020jch} (which holds for any signature) and continuity of the pullback by $G$, we have 
\begin{equation*} \label{lemma47}
 \frac{1}{2 \pi i } \int_{\gamma_{\epsilon}} (z -i\epsilon)^{-\alpha} \textbf{F}_k(z) \, dz = \frac{(-1)^k \Gamma(-\alpha + 1)}{\Gamma(-\alpha - k + 1)\Gamma(\alpha + k) } \textbf{F}_{k + \alpha - 1}(i\epsilon).
\end{equation*} 

By Lemma \ref{lemgamma}, the coefficients simplify to
\begin{equation*} 
    \frac{1}{\Gamma(\alpha)} = (-1)^k \frac{\Gamma(1 - \alpha)}{\Gamma(1-\alpha-k) \Gamma(\alpha+k)}.
\end{equation*}
Therefore, the complex powers can be expressed as 
\begin{equation} \label{eqcxpowersinsideproof}
    (P -i\epsilon)^{-\alpha} = \frac{\chi}{\Gamma(\alpha)} \sum_{k=0}^{N} u_k \mathbf{F}_{k + \alpha - 1}(i\epsilon) + R_N(i\epsilon,\alpha).
\end{equation}
This equality is understood as operators in $\pazocal B(H_c^*,H_{\loc}^{*+r})$ for some $r \in \R$, or as distributions in $\pazocal{D}'(\U)$, where $$R_N(i\epsilon,\alpha) := \frac{1}{2 \pi i} \int_{\gamma_{\epsilon}} (z -i\epsilon)^{-\alpha} E_{N}(z) \,  dz.$$ 

Under relative forward convexity, given $s,p$, we have for $N$ sufficiently large that $E_N(z)
        =
        \pazocal{O}_{C^s(\U_0)}\bigl(\langle z\rangle^{-p-2}\bigr)$ by Theorem \ref{thmRz} . The $\pazocal{O}(\langle z \rangle^{-p-2})$ estimate for $E_N(z)$ from Theorem \ref{thmRz} justifies that we can take the contour integral and it moves the holomorphy half-plane arbitrarily far left as $N$ increases. Together with the uniform regularity of $E_N(z)$ it finally proves that $R_N(i\epsilon,\alpha)$ lies in $C^s(\U_0)$ and is holomorphic for $\Re(\alpha) > - p$.
    
    Without relative forward convexity, we start from the operator equality (\ref{eqcxpowersinsideproof}). First, given $s,p$, we have for $\Re(\alpha)$ large enough that $\mathbf{F}_{k + \alpha - 1}(i\epsilon) \in C^s(\U)$ by Lemma \ref{lemFonU}. Therefore the pullback of $u_k \mathbf{F}_{k + \alpha - 1}(i\epsilon)$ is trivially well-defined, i.e. $$ x \mapsto \iota_\Delta^*(u_k \mathbf{F}_{k + \alpha - 1}(i\epsilon))(x) = u_k(x,x)\mathbf{F}_{k + \alpha - 1}(i\epsilon)(x,x) \in C^s_{\loc}\left(M;\mathrm{End}(E)\right),$$ as $C^s$-sections. Note that $\iota_\Delta^*(u_k)$ itself constitutes an element of $C^{\infty}\left(M;\mathrm{End}(E)\right)$.
    
    Then, knowing that $ \wf'^{(s)}_{\langle z \rangle^{-\infty}}\left(E_{N,1}(z)\right)
    \subset
    \pazocal{R}'_{\chi}$ and that $E_{N,2}(z)
        =
        \pazocal{O}_{C^s(\U)}\bigl(\langle z\rangle^{-p-2}\bigr)$, we now determine the wavefront set of $R_N(i\epsilon,\alpha)$.
        
    Let $s \in \R_+$, $p \in \N$, and let \((q_1,q_2)\notin \pazocal{R}'_{\chi}\). By definition of the operator wavefront set, there exist \(C_i\in\Psi^0(M)\), elliptic at \(q_i\), such that, for $N$ taken sufficiently large,
\[
    C_1E_N(z)C_2^*
    =
    \pazocal O_{H^*\to H^{*+s}}\bigl(\langle z\rangle^{-p-2}\bigr)
\]
uniformly for \(z\in\gamma_\varepsilon\), for $N$ taken large enough.  Hence
\[
    z \, \mapsto
    (z-i\varepsilon)^{-\alpha} C_1E_N(z)C_2^*
\]
is integrable along \(\gamma_\varepsilon\) with values in the Banach space $\pazocal B(H_{\mathrm c}^\ell,H_{\loc}^{\ell+s})$ for any $\ell \in \R$.
Thus the following is a Bochner integral in that space:
\[
    \frac{1}{2\pi i}
    \int_{\gamma_\varepsilon}
    (z-i\varepsilon)^{-\alpha}
 C_1E_N(z)C_2^*\,dz .
\]

Because \(C_1\), \(C_2^*\) are fixed continuous operators,
independent of \(z\), they may be inserted into the contour integral. Hence

\begin{equation} \label{eqinsertc_i}
    C_1R_N(i\varepsilon,\alpha)C_2^*
    =
    \frac{1}{2\pi i}
    \int_{\gamma_\varepsilon}
    (z-i\varepsilon)^{-\alpha}  C_1E_N(z)C_2^*\,dz
\end{equation}

in \(\pazocal B(H_{\mathrm{c}}^\ell,H_{\loc}^{\ell+s})\). Therefore, since $C_1E_N(z)C_2^* = \pazocal O_{H^*\to H^{*+s}}\bigl(\langle z\rangle^{-p-2}\bigr)$, from equality (\ref{eqinsertc_i}) we obtain 

\[
    C_1R_N(i\varepsilon,\alpha)C_2^*
    =
    \pazocal O_{H^*\to H^{*+s}}(1).
\]
By definition, it yields
\[
    (q_1,q_2)\notin
    \wf'^{(s)}\bigl(R_N(i\varepsilon,\alpha)\bigr).
\]

The above statement holds for any $s \in \R_+$, as well as for arbitrary $(q_1,q_2)\notin \pazocal{R}'_{\chi}$.  

\smallskip

Therefore, we just proved that
    $$\wf'\left(R_N(i\epsilon,\alpha)\right) \subset
    \pazocal{R}'_{\chi}$$
for the usual distributional wavefront. Finally,  Lemma \ref{lemconv} says that $$\pazocal{R}'_{\chi} \cap D'_+ = \varnothing, \qquad D'_+ = \left\{(q,q) \, \vert\,  q \in S^*M \right\},$$
which is, as we pointed out already, Hörmander's criterion for restricting a kernel to the base diagonal. Hence it allows us to apply Hörmander's pullback theorem to $R_N(i\epsilon,\alpha)$, considered as a distribution. It yields $\rho_N(i\epsilon,\alpha) := \iota_\Delta^*R_N(i\epsilon,\alpha)$, a well-defined distribution in $x \in M$ which is holomorphic for
$\operatorname{Re}(\alpha)>-p$.

 Combining the two pullbacks defines $\iota_\Delta^*\Pa$ as a distribution in $x \in M$ by the equality
 \begin{equation*} 
    \iota_\Delta^*(P-i\varepsilon)^{-\alpha}
    =
    \frac{1}{\Gamma(\alpha)}
    \sum_{k=0}^{N}
    \iota_\Delta^*
    \left(
        u_k \mathbf{F}_{\alpha+k-1}(i\varepsilon)
    \right)
    +
    \rho_N(i\varepsilon,\alpha) \in \pazocal{D}'(M;\mathrm{End}(E)).
\end{equation*}

The lower half-plane statements proceed analogously. 
\end{proof} 

\subsection{Residues of the pseudo-Riemannian spectral zeta function} \label{sectresidues}
 The analysis naturally separates according to the parity of \(n\). Consider, whether $n$ is even or odd, 
\begin{equation} \label{poles}
    \pazocal{P}_n = \left\{ \frac{n}{2} - k \ \vert \ k \in \N_0, \ \frac{n}{2} - k \notin - \N_0 \right\}.
\end{equation}

Therefore, 
\[
\pazocal{P}_n =
\begin{cases}
\left\{ \tfrac{n}{2},\, \tfrac{n}{2}-1,\, \ldots,\, 1 \right\},
& n \text{ even}, \\[4pt]
\left\{ \tfrac{n}{2},\, \tfrac{n}{2}-1,\, \tfrac{n}{2}-2,\, \ldots \right\},
& n \text{ odd}.
\end{cases}
\]
We will prove that the pole set of the meromorphically continued spectral zeta function is contained in $\pazocal{P}_n$.
To begin with, we study the on-diagonal restriction of the elementary family $\mathbf{F}^{\kappa}_{\beta}(z)$. Given any $s \in \R_+$, for $\Re(\beta)$ sufficiently large, the equality $$F^\kappa_\beta(z,0) = \mathbf{F}^\kappa_\beta(z,x,x)$$ is well-defined since $x \mapsto \mathbf F^\kappa_\beta(z,x)$ belongs  to $C^s_{\loc}(\R^n)$ by Lemma \ref{lemFonU}.

\begin{proposition} \label{propkfolwick}
Assume \(\operatorname{Re}(\beta)>n/2-1\).

For \(\operatorname{Im}(w)>0\),
\begin{equation} \label{Fimwpos}
    F^\kappa_\beta(w,0)
    =
    \frac{i^\kappa}{(4\pi)^{n/2}}
    \Gamma\!\left(\beta+1-\frac n2\right)
    (-w)^{\frac n2-\beta-1}.
\end{equation}
For \(\operatorname{Im}(w)<0\),
\begin{equation} \label{Fimwneg}
    F^\kappa_\beta(w,0)
    =
    \frac{(-i)^\kappa}{(4\pi)^{n/2}}
    \Gamma\!\left(\beta+1-\frac n2\right)
    (-w)^{\frac n2-\beta-1}.
\end{equation}
Both identities extend meromorphically in \(\beta \in \C\).
\end{proposition}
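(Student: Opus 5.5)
The plan is to compute $F^\kappa_\beta(w,0)$ directly from the definition (\ref{defflatF}) with $\lambda=\beta+1$, using the Schwinger parameter representation already recalled in the proof of Proposition \ref{propQk} together with the one-dimensional Fresnel integrals. Everything reduces to a $\Gamma$-function identity and a bookkeeping of phases and branches.

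\emph{Case $\operatorname{Im}(w)>0$.} Formally, I would write
\[
F^\kappa_\beta(w,0)=\frac{\Gamma(\lambda)}{(2\pi)^n}\int_{\mathbb R^n}\bigl(Q_\kappa(\xi)-w\bigr)^{-\lambda}\,d^n\xi ,
\qquad
\bigl(Q_\kappa(\xi)-w\bigr)^{-\lambda}=\frac{e^{i\pi\lambda/2}}{\Gamma(\lambda)}\int_0^\infty e^{-iu(Q_\kappa(\xi)-w)}u^{\lambda-1}\,du .
\]
I then exchange the $\xi$ and $u$ integrals. The $\xi$-integral factorises into one-dimensional Fresnel integrals. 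Using
\[
\int_{\mathbb R} e^{\mp iu t^2}\,dt=\sqrt{\pi/u}\,e^{\mp i\pi/4},
\]
the $\kappa$ negative and $n-\kappa$ positive squares give
\[
\int_{\mathbb R^n}e^{-iuQ_\kappa(\xi)}\,d^n\xi=(\pi/u)^{n/2}\,e^{-i\pi n/4}\,i^{\kappa}.
\]
The remaining scalar integral is
\[
\int_0^\infty u^{\lambda-n/2-1}e^{iuw}\,du=\Gamma\!\left(\lambda-\tfrac n2\right)(-iw)^{\frac n2-\lambda},
\]
which converges because $\operatorname{Re}(\lambda)>n/2$ (this is exactly the hypothesis $\operatorname{Re}(\beta)>n/2-1$) and $\operatorname{Im}(w)>0$.

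\emph{Phases and branches.} The constants combine as $\pi^{n/2}/(2\pi)^n=(4\pi)^{-n/2}$. For the branches, note that for $\operatorname{Im}(w)>0$ one has $\arg(-w)\in(-\pi,0)$, and since $-iw=i(-w)$, $\arg(-iw)=\arg(-w)+\pi/2\in(-\pi/2,\pi/2)$. Both points therefore lie in the principal branch, so $(-iw)^a=e^{i\pi a/2}(-w)^a$ with $a=\frac n2-\lambda$. The total phase is
\[
e^{i\pi\lambda/2}\,e^{-i\pi n/4}\,e^{i\pi(n/2-\lambda)/2}=1.
\]
This leaves exactly (\ref{Fimwpos}).

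\emph{Case $\operatorname{Im}(w)<0$.} I would use the conjugate representation
\[
(Q_\kappa(\xi)-w)^{-\lambda}=\frac{e^{-i\pi\lambda/2}}{\Gamma(\lambda)}\int_0^\infty e^{iu(Q_\kappa(\xi)-w)}u^{\lambda-1}\,du .
\]
The Fresnel integrals are now complex conjugated, which turns $i^\kappa$ into $(-i)^\kappa$. The same branch check, with all signs reversed, gives (\ref{Fimwneg}).

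\emph{Meromorphy.} Both right-hand sides are entire in $\beta$ apart from the poles of $\Gamma(\beta+1-\frac n2)$, since $(-w)^{\frac n2-\beta-1}$ is entire in $\beta$ for fixed $w\notin\mathbb R$. They therefore furnish the meromorphic extension in $\beta\in\mathbb C$.

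\emph{Main obstacle.} The step that needs real care is justifying the interchange of integrals and the meaning of ``evaluation at $x=0$''. For indefinite $Q_\kappa$, $(Q_\kappa-w)^{-\lambda}$ is not absolutely integrable near the null cone at infinity, so the formal $\xi$-integral is not literally convergent. I would handle this as follows. By Lemma \ref{lemFonU} (or (\ref{regFflat})), $x\mapsto F^\kappa_\beta(w,x)$ is continuous for $\operatorname{Re}(\beta)$ large, so
\[
F^\kappa_\beta(w,0)=\lim_{\delta\to0}\langle F^\kappa_\beta(w,\cdot),\varphi_\delta\rangle
\]
for an approximate identity $\varphi_\delta$ with Gaussian Fourier transform $e^{-\delta|\xi|^2}$. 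With this damping the $(u,\xi)$ integral is absolutely convergent, so Fubini applies. The regularised Fresnel integrals are explicit Gaussians, $\prod_j(\delta\pm iu)^{-1/2}\sqrt\pi$, and dominated convergence as $\delta\to0$ recovers the formula above. This establishes the identity for $\operatorname{Re}(\beta)$ large. I would then extend it to all $\operatorname{Re}(\beta)>n/2-1$, and meromorphically beyond, by analytic continuation in $\beta$: the left-hand side is holomorphic in $\beta$ by Proposition \ref{propQk}(1), evaluated at $0$ where it is continuous, and the Bernstein--Sato relation (\ref{diffQ}) provides the continuation of the diagonal value.
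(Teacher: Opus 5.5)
Your proposal is correct and follows essentially the same route as the paper: the Schwinger parameter representation, the Fresnel integral for $Q_\kappa$, the integral $\int_0^\infty e^{iuw}u^{\nu-1}\,du=\Gamma(\nu)(-iw)^{-\nu}$ with the same phase cancellation, the Gaussian damping $e^{-\delta|\xi|^2}$ to justify Fubini, and the meromorphic continuation read off from the explicit right-hand side (the paper computes the Fresnel integral at general $x$ and then sets $x=0$, whereas you factorize it directly at $x=0$). Your approximate-identity justification of the evaluation at $x=0$ is more careful than the paper's one-line remark, but the appeal to the Bernstein--Sato relation is superfluous, since the right-hand sides are already explicitly meromorphic in $\beta$.
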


\begin{proof}
Recall the expression of the distributions $F_{\beta}(z)$, namely
\begin{equation*} 
    F_\beta^\kappa(w,x)
    :=
    \frac{\Gamma(\beta+1)}{(2\pi)^n}
    \int_{\mathbb{R}^n}
    e^{i\langle x,\xi\rangle}
    \bigl(Q_\kappa(\xi)-w\bigr)^{-\beta-1}
    \,d^n\xi.
\end{equation*}
Put $
    \lambda:=\beta+1$, and $ \nu:=\lambda-\frac n2 = \beta+1-\frac n2$. 
We first assume $
    \operatorname{Re}(\nu)>0$, i.e. $
    \operatorname{Re}(\beta)>\frac n2-1$. 
    
    In mixed signature, the formal integral written as
\[
    \int_{\mathbb R^n}
    (Q_\kappa(\xi)-w)^{-\lambda}\,d\xi
\]
is not absolutely convergent in general. Hence we use the Schwinger parameter formula
and the Fresnel integral for the quadratic form \(Q_\kappa\).

Assume first that \(\operatorname{Im}(w)>0\). Since \(Q_\kappa(\xi)-w\) lies in
the lower half-plane, we have
\[
    (Q_\kappa(\xi)-w)^{-\lambda}
    =
    \frac{e^{i\pi\lambda/2}}{\Gamma(\lambda)}
    \int_0^\infty
    e^{-iu(Q_\kappa(\xi)-w)}
    u^{\lambda-1}\,du .
\]
Using the normalization in the definition of \(F^\kappa_\beta\), this gives
\[
\begin{aligned}
    F^\kappa_\beta(w,x)
    &=
    \frac{e^{i\pi\lambda/2}}{(2\pi)^n}
    \int_0^\infty
    e^{iuw}u^{\lambda-1}
    \left(
        \int_{\mathbb R^n}
        e^{i\langle x,\xi\rangle}
        e^{-iuQ_\kappa(\xi)}\,d\xi
    \right)du .
\end{aligned}
\]
Here the inner integral is understood as an oscillatory integral. Its existence is justified, for instance, by inserting the regularization \(e^{-\delta|\xi|^2}\) and then letting
\(\delta \rightarrow 0^+\). This also justifies the use of Fubini's theorem for interchanging the order of integrations.

Now, write
\[
    Q_\kappa(\xi)
    =
    -|\xi_-|^2+|\xi_+|^2,
    \qquad
    \xi_-=(\xi_1,\ldots,\xi_\kappa),
    \qquad
    \xi_+=(\xi_{\kappa+1},\ldots,\xi_n).
\]
The Fresnel formula gives, for \(u>0\),
\[
    \int_{\mathbb R^n}
    e^{i\langle x,\xi\rangle}
    e^{-iuQ_\kappa(\xi)}\,d\xi
    =
    e^{i\pi(2\kappa-n)/4}
    \pi^{n/2}u^{-n/2}
    e^{iQ_\kappa(x)/(4u)} .
\]
Therefore
\[
    F^\kappa_\beta(w,x)
    =
    \frac{\pi^{n/2}}{(2\pi)^n}
    e^{i\pi\lambda/2}
    e^{i\pi(2\kappa-n)/4}
    \int_0^\infty
    e^{iuw}
    e^{iQ_\kappa(x)/(4u)}
    u^{\nu-1}\,du .
\]
At \(x=0\), the factor \(e^{iQ_\kappa(x)/(4u)}\) is equal to \(1\). Hence
\[
    F^\kappa_\beta(w,0)
    =
    \frac{\pi^{n/2}}{(2\pi)^n}
    e^{i\pi\lambda/2}
    e^{i\pi(2\kappa-n)/4}
    \int_0^\infty
    e^{iuw}u^{\nu-1}\,du .
\]
The last integral is convergent at infinity since \(\operatorname{Im}(w)>0\),
and it is convergent at \(u=0\) precisely when \(\operatorname{Re}(\nu)>0\). The standard formula gives
\[
    \int_0^\infty e^{iuw}u^{\nu-1}\,du
    =
    \Gamma(\nu)(-iw)^{-\nu}.
\]
For \(\operatorname{Im}(w)>0\), one has $
    (-iw)^{-\nu}
    =
    e^{-i\pi\nu/2}(-w)^{-\nu}$.
Consequently,
\[
\begin{aligned}
    F^\kappa_\beta(w,0)
    &=
    \frac{1}{(4\pi)^{n/2}}
    e^{i\pi\lambda/2}
    e^{i\pi(2\kappa-n)/4}
    e^{-i\pi\nu/2}
    \Gamma(\nu)(-w)^{-\nu}  \\
    &=
    \frac{i^\kappa}{(4\pi)^{n/2}}
    \Gamma\!\left(\beta+1-\frac n2\right)
    (-w)^{\frac n2-\beta-1}.
\end{aligned}
\]
This proves the upper-half-plane formula.

If \(\operatorname{Im}(w)<0\), one uses the corresponding Fresnel formula and analogous computations yield
\[
\begin{aligned}
    F^\kappa_\beta(w,0)
    &=
    \frac{(-i)^\kappa}{(4\pi)^{n/2}}
    \Gamma\!\left(\beta+1-\frac n2\right)
    (-w)^{\frac n2-\beta-1}.
\end{aligned}
\]

The formulas have been proved in the half-plane $
    \operatorname{Re}(\beta)>\frac n2-1$.
Since the right-hand sides are meromorphic in \(\beta \in \C\), they give the meromorphic continuation of $\beta \mapsto F^\kappa_\beta(w,0)$.
\end{proof}

Recall the expression of the spectral zeta function density from Theorem \ref{thmexpanresol}, namely 
\begin{equation} \label{rappelzetadecomp}
   \zeta^\kappa_{g,\varepsilon}(\alpha) := \iota_\Delta^*(P-i\varepsilon)^{-\alpha} =
    \frac{1}{\Gamma(\alpha)}
    \sum_{j=0}^{N}
    \iota_\Delta^*
    \left(
        u_j \mathbf{F}^\kappa_{\alpha+j-1}(i\varepsilon)
    \right)
    +
    \rho_N(i\varepsilon,\alpha)  
\end{equation}
for $\Re(\alpha) > \frac{n}{2}$ and $N$ taken large enough, where $\alpha \mapsto \rho_N(i\varepsilon,\alpha)$ is holomorphic. Its finite sum part, the one relevant for the residues, is thus given by
$$ \frac{1}{\Gamma(\alpha)} \sum_{j=0}^{N} \iota_\Delta^*(u_j) F^\kappa_{j + \alpha - 1}(i\epsilon,0), \quad x \mapsto \iota_\Delta^*(u_j)(x) = u_j(x,x) \in C^{\infty}\left(M;\mathrm{End}(E)\right).$$
Now, insert \(\beta=\alpha+j-1\) in (\ref{Fimwpos}) from Proposition \ref{propkfolwick}, with $w = i \epsilon$. This gives the value of $F_{j + \alpha - 1}(i\epsilon,0)$. At $x \in M$, the upper-half-plane diagonal model term appearing in the finite sum is then 
\begin{equation} \label{diagmodel}
    \frac{i^\kappa u_j(x,x)
    \Gamma\!\left(\alpha+j-\frac n2\right)}
    {(4\pi)^{n/2}\Gamma(\alpha)}
    (-i\varepsilon)^{\frac n2-\alpha-j}.
\end{equation}
The candidate poles are therefore $\alpha=\frac n2-k$ for $k \in \N_0$.
The zeros of \(1/\Gamma(\alpha)\) remove precisely the candidates that are non-positive integers.

For completeness, let $k \in \N_0$ and put $\alpha_k=\frac n2-k$. We choose $N \geq k$ large enough so that the remainder $\rho_N$ in (\ref{rappelzetadecomp}) is holomorphic near $\alpha_k$. Then, at $\alpha = \alpha_k$ in (\ref{diagmodel}), the Gamma function
\[
    \Gamma\!\left(\alpha+j-\frac n2\right)
    =
    \Gamma(j-k+\alpha-\alpha_k)
\]
has residue $
    \frac{(-1)^{k-j}}{(k-j)!}$ for \(j\leq k\),
whereas for \(j>k\) it is regular. The factor \(1/\Gamma(\alpha_k)\) is
non-zero exactly when \(\alpha_k\notin -\mathbb N_0\). If \(n\) is odd, every \(\alpha_k\) is a half-integer, so no
cancellation occurs. If \(n\) is even, \(\alpha_k\) becomes a non-positive
integer at \(k=n/2\), and every later candidate is cancelled. 

This proves that $\alpha \mapsto \zeta^\kappa_{g,\varepsilon}(\alpha)$ admits meromorphic continuation, and that 
$$\pazocal{P}_n = \left\{ \frac{n}{2} - k \ \vert \ k \in \N_0, \ \frac{n}{2} - k \notin - \N_0 \right\}$$ constitutes its pole set without any additional microlocal argument. Hence the residue at $\alpha = \alpha_k$ of the spectral zeta density is well-defined as
\begin{equation} \label{eqresid}
    \operatorname*{res}_{\alpha=\alpha_k}\zeta^\kappa_{g,\varepsilon}(\alpha) 
    =
    \sum_{j=0}^{k}
    \iota_\Delta^*(u_j)
    \operatorname*{res}_{\alpha=\alpha_k}
    \left[
        \frac{1}{\Gamma(\alpha)}
        F^\kappa_{\alpha+j-1}(i\varepsilon,0)
    \right].
\end{equation}
   It is a smooth section of $\mathrm{End}(E)$, namely 
    $\operatorname*{res}_{\alpha=\alpha_k}\zeta^\kappa_{g,\varepsilon}(\alpha)  \in C^{\infty}\left(M;\mathrm{End}(E)\right)$.

In the following results, by an abuse of notation, we will write $\operatorname*{res}_{\alpha=\alpha_k}
    (P-i\varepsilon)^{-\alpha}(x,x)$ instead of writing $\left(\operatorname*{res}_{\alpha=\alpha_k}\zeta^\kappa_{g,\varepsilon}(\alpha)\right)(x)$, which is well-defined as an element of $\mathrm{End}(E_x)$.

\begin{theorem} \label{thmresbeforelim}
Let $k \in \N_0$ be such that $ \alpha_k=\frac n2-k\in\pazocal{P}_n $.
Under the hypotheses that make the diagonal expansion in Theorem \ref{thmexpanresol} valid,
\begin{equation} \label{resnolimminus}
    \operatorname*{res}_{\alpha=\alpha_k}
    (P-i\varepsilon)^{-\alpha}(x,x)
    =
    \frac{i^\kappa}{(4\pi)^{n/2}\Gamma(\alpha_k)}
    \sum_{j=0}^{k}
    \frac{(i\varepsilon)^{k-j}}{(k-j)!}\,
    u_j(x,x) \in \mathrm{End}(E_x),
\end{equation}
and
\begin{equation} \label{resnolimplus}
    \operatorname*{res}_{\alpha=\alpha_k}
    (P+i\varepsilon)^{-\alpha}(x,x)
    =
    \frac{(-i)^\kappa}{(4\pi)^{n/2}\Gamma(\alpha_k)}
    \sum_{j=0}^{k}
    \frac{(-i\varepsilon)^{k-j}}{(k-j)!}\,
    u_j(x,x) \in \mathrm{End}(E_x).
\end{equation}
If \(\alpha_k\in -\mathbb N_0\), the apparent singularity cancels out.
\end{theorem}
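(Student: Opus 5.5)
The plan is to compute the residue directly from the diagonal expansion (\ref{rappelzetadecomp}) of Theorem \ref{thmexpanresol}, using the explicit value of the elementary family at the origin from Proposition \ref{propkfolwick}. Fix $k$ with $\alpha_k\in\pazocal P_n$ and choose $N\geq k$ so large that $\rho_N(i\varepsilon,\cdot)$ is holomorphic in a neighborhood of $\alpha_k$. Then only the finite sum contributes to the residue, and (\ref{eqresid}) reduces the problem to computing, for each $0\le j\le N$, the residue at $\alpha_k$ of the scalar function (\ref{diagmodel}):
\[
\frac{i^\kappa\,\Gamma\!\left(\alpha+j-\frac n2\right)}{(4\pi)^{n/2}\Gamma(\alpha)}\,(-i\varepsilon)^{\frac n2-\alpha-j}.
\]
The endomorphism $u_j(x,x)$ factors out, since it is independent of $\alpha$.

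For $j>k$, the argument $\alpha_k+j-\frac n2=j-k$ is a positive integer, so the Gamma factor is holomorphic there and the term contributes nothing. For $j\le k$, $\Gamma(\alpha+j-\frac n2)=\Gamma(j-k+(\alpha-\alpha_k))$ has a simple pole with residue $(-1)^{k-j}/(k-j)!$. The remaining factors $1/\Gamma(\alpha)$ and $(-i\varepsilon)^{\frac n2-\alpha-j}$ are holomorphic near $\alpha_k$, so we simply evaluate them there. This gives $1/\Gamma(\alpha_k)$ and $(-i\varepsilon)^{k-j}$, where the exponent is an integer, so the branch choice is irrelevant. Combining, $(-1)^{k-j}(-i\varepsilon)^{k-j}=(i\varepsilon)^{k-j}$, and summing over $j\le k$ yields (\ref{resnolimminus}).

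For the lower family $(P+i\varepsilon)^{-\alpha}$, I would run the same argument with the reflected contour, the lower-half-plane version of Theorem \ref{thmexpanresol}, and formula (\ref{Fimwneg}) at $w=-i\varepsilon$. The prefactor becomes $(-i)^\kappa$, and the power becomes $(i\varepsilon)^{\frac n2-\alpha-j}$, which at $\alpha_k$ equals $(i\varepsilon)^{k-j}$. Multiplying by $(-1)^{k-j}$ gives $(-i\varepsilon)^{k-j}$, which is (\ref{resnolimplus}).

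Finally, when $\alpha_k\in-\N_0$, the factor $1/\Gamma(\alpha)$ has a simple zero at $\alpha_k$. This cancels the simple pole of the Gamma factor, so each term is holomorphic there, which proves the last sentence.

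The only delicate point is that the residue of the full density equals the sum of the termwise residues. This requires that the meromorphic continuation of $\zeta^\kappa_{g,\varepsilon}$ be given by the right-hand side of (\ref{rappelzetadecomp}) near $\alpha_k$, with $\rho_N$ holomorphic there. That is exactly what Theorem \ref{thmexpanresol} (2) provides, together with the meromorphic continuation in Proposition \ref{propkfolwick}. The other thing to watch is the sign and branch bookkeeping of $(-w)^{\frac n2-\beta-1}$, which I would check carefully against the standard branch on $\C\setminus\left]-\infty,0\right]$.
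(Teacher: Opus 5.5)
Your proposal is correct and follows essentially the same route as the paper. You discard the holomorphic remainder $\rho_N$ (with $N\geq k$), insert Proposition~\ref{propkfolwick} at $w=i\varepsilon$ (resp.\ $w=-i\varepsilon$), and use the residue $(-1)^{k-j}/(k-j)!$ of $\Gamma(j-k+\alpha-\alpha_k)$ together with $(-1)^{k-j}(-i\varepsilon)^{k-j}=(i\varepsilon)^{k-j}$, and this is exactly the paper's computation. Your handling of the case $\alpha_k\in-\N_0$, where the zero of $1/\Gamma(\alpha)$ cancels the pole, also matches the discussion the paper gives just before the theorem.
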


\begin{proof}
As explained above, we are interested in the residues of  $\frac{1}{\Gamma(\alpha)} F^\kappa_{j + \alpha - 1}(i\epsilon,0)$, which are given in (\ref{diagmodel}), namely 
\begin{equation*} 
    \frac{i^\kappa 
    \Gamma\!\left(\alpha+j-\frac n2\right)}
    {(4\pi)^{n/2}\Gamma(\alpha)}
    (-i\varepsilon)^{\frac n2-\alpha-j}.
\end{equation*}
At \(\alpha=\alpha_k\), the factor $
    \Gamma\!\left(\alpha+j-\frac n2\right) = \Gamma(j-k+\alpha-\alpha_k)$
has residue $
    \frac{(-1)^{k-j}}{(k-j)!}$
for \(j\leq k\), and is regular for \(j>k\). Therefore, the above initial residue at \(\alpha=\alpha_k\) reads 
\begin{equation*} 
     \frac{i^\kappa (i\varepsilon)^{k-j} }
    {(4\pi)^{n/2}\Gamma(\alpha_k) (k-j)!}.
\end{equation*}
From (\ref{eqresid}), the residues of $\zeta^\kappa_{g,\varepsilon}$ are then given by 
\begin{equation*} 
    \operatorname*{res}_{\alpha=\alpha_k}
    (P-i\varepsilon)^{-\alpha}(x,x)
    =
    \frac{i^\kappa}{(4\pi)^{n/2}\Gamma(\alpha_k)}
    \sum_{j=0}^{k}
    \frac{(i\varepsilon)^{k-j}}{(k-j)!}\,
    u_j(x,x).
\end{equation*} 
Note again that the holomorphic remainder $\rho_N$ in Theorem \ref{thmexpanresol} does not contribute a residue. The calculation for the lower-half-plane applies accordingly.
\end{proof}

\begin{proposition} \label{propthmfinal}
For every \(k \in \N_0\) with $
    \frac n2-k\in\pazocal {P}_n$,
taking the residue first and then letting $\epsilon \to 0^+$ gives
\begin{equation} \label{respropfin-}
    \lim_{\epsilon \to 0^+}
    \operatorname*{res}_{\alpha=\frac n2-k}
    (P-i\varepsilon)^{-\alpha}(x,x)
    =
    \frac{i^\kappa u_k(x,x)}
    {(4\pi)^{n/2}\Gamma\!\left(\frac n2-k\right)} \in \mathrm{End}(E_x),
\end{equation}
and
\begin{equation} \label{respropfin+}
    \lim_{\epsilon \to 0^+}
    \operatorname*{res}_{\alpha=\frac n2-k}
    (P+i\varepsilon)^{-\alpha}(x,x)
    =
    \frac{(-i)^\kappa u_k(x,x)}
    {(4\pi)^{n/2}\Gamma\!\left(\frac n2-k\right)} \in \mathrm{End}(E_x),
\end{equation}
as limits in \(C^\infty\).
\end{proposition}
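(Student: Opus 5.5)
The plan is to start from the explicit finite-$\varepsilon$ residue formulas of Theorem \ref{thmresbeforelim}, namely (\ref{resnolimminus}) and (\ref{resnolimplus}), and pass to the limit $\varepsilon\to0^+$ term by term. For fixed $k$ with $\alpha_k=\frac n2-k\in\pazocal P_n$, the residue is a finite sum
\[
\frac{i^\kappa}{(4\pi)^{n/2}\Gamma(\alpha_k)}\Bigl(u_k(x,x)+\sum_{j=0}^{k-1}\frac{(i\varepsilon)^{k-j}}{(k-j)!}\,u_j(x,x)\Bigr).
\]
The $j=k$ term is independent of $\varepsilon$. Each term with $j<k$ carries a factor $\varepsilon^{k-j}$ with $k-j\geq1$, multiplied by the $\varepsilon$-independent smooth section $\iota_\Delta^*u_j\in C^\infty(M;\mathrm{End}(E))$. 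The limit is therefore $\frac{i^\kappa u_k(x,x)}{(4\pi)^{n/2}\Gamma(\alpha_k)}$, which is (\ref{respropfin-}). The lower half-plane formula (\ref{respropfin+}) follows in the same way with $(-i)^\kappa$ and $(-i\varepsilon)^{k-j}$.

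To justify convergence in $C^\infty$, and not just pointwise, I will note two facts. First, the Hadamard coefficients $u_j$ are defined by the transport equations (\ref{urecursion}) from $P$ and $g$ alone, so they do not depend on $\varepsilon$, the contour $\gamma_\varepsilon$, or $\chi$. Second, on any compact $K\subset M$ each $C^m(K)$ seminorm of the difference is bounded by $\sum_{j<k}\varepsilon^{k-j}\|u_j(\cdot,\cdot)\|_{C^m(K)}/(k-j)!$, which tends to $0$.

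The one point needing care is that the residue at fixed $\varepsilon$ is genuinely given by (\ref{resnolimminus}) for every small $\varepsilon>0$. In particular, the remainder $\rho_N(i\varepsilon,\alpha)$ of Theorem \ref{thmexpanresol} must be holomorphic near $\alpha_k$ for each $\varepsilon$. This holds by choosing $N\geq k$ with $p>k-\frac n2$, for each $\varepsilon$ separately, and needs no uniformity in $\varepsilon$, because the residue formula is exact at every fixed $\varepsilon$. I expect this to be the only potential obstacle, since the limit $\varepsilon\to0^+$ of the spectral objects themselves is never taken, only that of the explicit polynomial in $\varepsilon$. Finally, for $\alpha_k\in\pazocal P_n$ we have $\Gamma(\alpha_k)$ finite and nonzero, so no further cancellation arises.
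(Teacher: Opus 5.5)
Your proposal is correct and follows the paper's own argument. You start from the finite-$\varepsilon$ formulas (\ref{resnolimminus})--(\ref{resnolimplus}) of Theorem~\ref{thmresbeforelim}; every term with $j<k$ carries $\varepsilon^{k-j}$ with $k-j\geq 1$ and vanishes in the limit, leaving only $u_k(x,x)$. Your extra remarks make explicit what the paper leaves implicit: the $u_j$ are $\varepsilon$-independent smooth sections, which gives convergence in every $C^m(K)$ seminorm, and $N$ can be chosen for each fixed $\varepsilon$ so that $\rho_N$ is holomorphic near $\alpha_k$.
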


\begin{proof}
Consider the upper-half-plane case. In the residue formula (\ref{resnolimminus}) proven in Theorem \ref{thmresbeforelim},
\begin{equation*} 
    \operatorname*{res}_{\alpha=\alpha_k}
    (P-i\varepsilon)^{-\alpha}(x,x)
    =
    \frac{i^\kappa}{(4\pi)^{n/2}\Gamma(\alpha_k)}
    \sum_{j=0}^{k}
    \frac{(i\varepsilon)^{k-j}}{(k-j)!}\,
    u_j(x,x),
\end{equation*}
 every term where \(j<k\) in the sum contains a positive power
of \(\varepsilon\). Therefore, taking the limit makes the residue go to zero. Only the case \(j=k\) survives, and it yields $u_k(x,x)$ after taking the limit. 

Hence we get 
\begin{equation*}
    \lim_{\epsilon \to 0^+}
    \operatorname*{res}_{\alpha=\frac n2-k}
    (P-i\varepsilon)^{-\alpha}(x,x)
    =
    \frac{i^\kappa u_k(x,x)}
    {(4\pi)^{n/2}\Gamma\!\left(\frac n2-k\right)}.
\end{equation*}
The limit from the lower half-plane is analogous and yields (\ref{respropfin+}).
\end{proof}

\begin{proposition} The pseudo-Riemannian spectral zeta function density $\alpha \mapsto \zeta^\kappa_{g,\varepsilon}(\alpha)$ is holomorphic at $\alpha=0$ in every dimension. \end{proposition}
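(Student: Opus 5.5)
We prove holomorphy at $\alpha=0$ by using the diagonal expansion (\ref{rappelzetadecomp}) from Theorem \ref{thmexpanresol} near $\alpha=0$, and then showing that every term in it is holomorphic there.

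First fix $p\geq 1$ and choose $N$ as in Theorem \ref{thmexpanresol} (2). Then $\rho_N(i\varepsilon,\alpha)$ is holomorphic on $\operatorname{Re}(\alpha)>-p$, a half-plane containing a neighborhood of $0$, so the remainder contributes no singularity. It remains to examine the finitely many model terms. By Proposition \ref{propkfolwick}, for each $j$ the $j$-th term is the expression (\ref{diagmodel}),
$$
\frac{i^\kappa u_j(x,x)}{(4\pi)^{n/2}}\,
\frac{\Gamma\!\left(\alpha+j-\frac n2\right)}{\Gamma(\alpha)}\,(-i\varepsilon)^{\frac n2-\alpha-j}.
$$
This identity holds first for $\operatorname{Re}(\alpha)$ large and then, by uniqueness of meromorphic continuation, on the whole region.

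The factor $(-i\varepsilon)^{\frac n2-\alpha-j}$ is entire in $\alpha$ because $\varepsilon>0$ is fixed. The factor $1/\Gamma(\alpha)$ is entire and has a simple zero at $\alpha=0$. The only possible singularity therefore comes from $\Gamma(\alpha+j-\frac n2)$ at $\alpha=0$. We split into two cases.

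\emph{Case $j-\frac n2\notin-\mathbb N_0$.} This covers all $j$ when $n$ is odd, and $j>\frac n2$ when $n$ is even. Here $\Gamma(\alpha+j-\frac n2)$ is holomorphic at $0$, so the term is holomorphic at $0$. In fact it vanishes at $\alpha=0$, because of the zero of $1/\Gamma$.

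\emph{Case $j-\frac n2=-m$ with $m\in\mathbb N_0$.} This occurs only when $n$ is even and $j\leq\frac n2$. Here $\Gamma(\alpha-m)$ has a simple pole at $0$ with residue $(-1)^m/m!$. Since $1/\Gamma(\alpha)=\alpha+O(\alpha^2)$, the product $\Gamma(\alpha-m)/\Gamma(\alpha)$ has a removable singularity at $0$, with limit $(-1)^m/m!$. Hence the term extends holomorphically across $\alpha=0$.

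This is exactly the cancellation already observed in Theorem \ref{thmresbeforelim}: the candidate $\alpha_k=0$, i.e.\ $k=\frac n2$, lies in $-\mathbb N_0$ and is excluded from $\pazocal P_n$. Summing the finitely many holomorphic terms and the holomorphic $\rho_N$ shows that $\zeta^\kappa_{g,\varepsilon}$ is holomorphic near $0$, with values in $\pazocal D'(M;\mathrm{End}(E))$. The lower half-plane family is handled identically using (\ref{Fimwneg}).

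The main obstacle is legitimacy rather than computation. One must make sure that the expansion (\ref{rappelzetadecomp}), which Theorem \ref{thmexpanresol} establishes by continuation from a half-plane where the pullbacks are defined, actually represents the meromorphic continuation near $\alpha=0$. One must also make sure that $N$ can be chosen uniformly, with $\rho_N$ holomorphic across $\operatorname{Re}(\alpha)=0$. Both points follow from the statement that $\rho_N$ is holomorphic for $\operatorname{Re}(\alpha)>-p$ with $p$ arbitrary, together with the identity theorem applied to the distribution-valued continuation. I would state these two points explicitly.
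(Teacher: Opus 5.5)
Your proof is correct and follows the same route as the paper. The paper argues that $0\notin\pazocal{P}_n$ when $n$ is odd, and that for $n$ even the candidate pole at $k=n/2$ is cancelled by the zero of $1/\Gamma(\alpha)$ in (\ref{diagmodel}); your term-by-term analysis of $\Gamma(\alpha+j-\frac n2)/\Gamma(\alpha)$, together with the explicit use of the holomorphy of $\rho_N$ on $\operatorname{Re}(\alpha)>-p$, is a more detailed version of that argument.
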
 \begin{proof} If $n$ is odd, then $\alpha=0$ does not belong to the pole set \[ \pazocal{P}_n = \left\{ \tfrac{n}{2},\, \tfrac{n}{2}-1,\, \tfrac{n}{2}-2,\, \ldots \right\}, \] which consists entirely of half-integers in this case. Hence $\zeta^\kappa_{g,\varepsilon}$ is holomorphic at $\alpha=0$. If $n$ is even, the only candidate pole at $\alpha=0$ corresponds to $k=n/2$. From~\eqref{diagmodel} and the discussion that follows, we see that this potential singularity is cancelled by the zero of $1/\Gamma(\alpha)$ at $\alpha=0$. 
\end{proof}

\subsection{The scalar curvature contribution} \label{sectsclarcurvcontrib} Finally, we explain how one can extract the scalar curvature from the residue of $\alpha \mapsto \zeta^\kappa_{g,\varepsilon}$ at $\alpha = \frac{n}{2}-1$.

Recall that the Laplace-type operator $P \in \mathrm{Diff}_{\mathrm{sc}}^{2,0} (M;E)$ admits a unique decomposition

\begin{equation} \label{laptypedecomp}
    P=\Box^{\nabla}+B,
\qquad
\Box^{\nabla}
:=
g^{ij}\left(
\nabla_i\nabla_j-\Gamma_{ij}^{k}\nabla_k
\right),
\end{equation}
where $\nabla$ is a scattering connection on $E$ and \(B\in C^\infty\left(\, \overbar{M};\operatorname{End}(E)\right)\). Here $g^{ij}$ denotes the inverse metric, considered as a section of ${}^{\mathrm{sc}}T \, \Mb \otimes_s {}^{\mathrm{sc}}T \, \Mb$, and \(\Gamma^k_{ij}\) are the Christoffel symbols of the Levi--Civita
connection of \(g\).

\begin{proposition} \label{propscalarcurvgeneral} Under the hypotheses that make Theorem \ref{thmexpanresol} valid, for $n \geq 3$,
    $$\lim_{\epsilon \to 0^+}
    \operatorname*{res}_{\alpha=\frac n2-1}
    (P \pm i\varepsilon)^{-\alpha}(x,x)
    =
    \frac{(\mp i)^\kappa}
    {(4\pi)^{n/2}\Gamma\!\left(\frac n2-1\right)} \left( -\frac{R_g(x)}{6} \operatorname{Id}_{E_x} - \, B(x) \right) \in \mathrm{End}(E_x),$$
    where $R_g(x)$ is the scalar curvature of $(M,g)$ at $x$.
\end{proposition}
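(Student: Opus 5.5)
By Proposition \ref{propthmfinal} with $k=1$ (admissible since $n\geq 3$ gives $\frac n2-1\in\pazocal P_n$), the limit of the residue equals
$$\frac{(\mp i)^\kappa u_1(x,x)}{(4\pi)^{n/2}\Gamma(\frac n2-1)}.$$
So everything reduces to the identity
$$u_1(x,x)=-\frac{R_g(x)}{6}\operatorname{Id}_{E_x}-B(x)$$
for the Hadamard coefficients defined by the transport equations (\ref{urecursion}), in the decomposition (\ref{laptypedecomp}). The plan is to evaluate the transport equations on the diagonal, working in normal coordinates centred at $x_0$ and in the radially parallel trivialization of $E$.

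\textbf{Step 1: $u_0$.} The $j=0$ equation reads $\mathfrak h u_0+2\nabla^E_V u_0=0$. In the radially parallel frame its solution is $u_0=|g|^{-1/4}\operatorname{Id}_E$, the pseudo-Riemannian van Vleck--Morette factor. The normal-coordinate expansion $g_{ij}=\eta_{ij}-\frac13 R_{ikjl}x^kx^l+O(|x|^3)$ is a formal Taylor identity and holds in any signature. It gives
$$|g|^{1/2}=1-\tfrac16\operatorname{Ric}_{kl}x^kx^l+O(|x|^3),$$
and hence
$$u_0=\bigl(1+\tfrac1{12}\operatorname{Ric}_{kl}x^kx^l+O(|x|^3)\bigr)\operatorname{Id}.$$
Sign conventions must be tracked against the paper's normalization $\sigma_2(P)=-g^{-1}(\xi,\xi)$.

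\textbf{Step 2: $u_1$ on the diagonal.} At the base point $V=0$ and $\mathfrak h(x_0)=0$, so the $j=1$ equation gives $2u_1(x_0,x_0)=-2(Pu_0)(x_0)$, that is $u_1(x_0,x_0)=-(Pu_0)(x_0,x_0)$, with $P$ acting in the second variable. We then compute $(\Box^\nabla u_0)(x_0)$ in normal coordinates. The Christoffel symbols vanish at $x_0$. In the radial gauge the connection one-form satisfies $A_i(x)=-\frac12 F_{ij}(x_0)x^j+O(|x|^2)$, and its divergence vanishes at $x_0$. Hence $\Box^\nabla u_0(x_0)=\eta^{ij}\partial_i\partial_j$ applied to the scalar factor, which equals
$$\tfrac1{12}\cdot 2\,\eta^{kl}\operatorname{Ric}_{kl}=\tfrac{R_g}{6}.$$
Together with $Bu_0(x_0)=B(x_0)$ this gives
$$u_1(x_0,x_0)=-\tfrac{R_g}{6}\operatorname{Id}-B(x_0).$$
The result is consistent with the Riemannian Kastler--Kalau--Walze formula recalled in the introduction: for $P=\Delta_g$ with $B=0$ and $\kappa=0$, the minus sign in front of $R_g$ matches the residue of $(-\Delta_g)^{-\alpha}$ after the sign change $P\leftrightarrow-\Delta_g$, which provides a check on the signs.

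\textbf{Main obstacle.} The expected difficulty is the sign and convention bookkeeping rather than analysis. Specifically:
\begin{itemize}
\item one must confirm that the pseudo-Riemannian normal-coordinate expansion yields $+\frac1{12}\operatorname{Ric}$ in $u_0$ with the paper's curvature convention;
\item one must confirm that $P=\Box^\nabla+B$, with principal symbol $-g^{-1}(\xi,\xi)$, enters (\ref{urecursion}) with exactly the stated sign;
\item one must check that the vector-bundle connection terms do not contribute at the diagonal. They do not, because $A(x_0)=0$ and $\partial_iA^i(x_0)=0$ by antisymmetry of $F$.
\end{itemize}
The signature enters only through $\eta^{kl}$ in the trace $\eta^{kl}\operatorname{Ric}_{kl}=R_g$, so the identity is signature independent. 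The phase factor $(\mp i)^\kappa$ is inherited unchanged from Proposition \ref{propthmfinal}.
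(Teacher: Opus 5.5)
Your proposal is correct and follows the paper's proof essentially step for step. You reduce via Proposition~\ref{propthmfinal} with $k=1$ to the identity $u_1(x_0,x_0)=-(Pu_0)(x_0,x_0)$, use $u_0=|g|^{-1/4}\operatorname{Id}$ in the radially parallel trivialization with the expansion $1+\frac1{12}\mathbf{Ric}_{ij}x^ix^j+\pazocal{O}(|x|^3)$, and eliminate the connection contribution by the antisymmetry of $\partial_iA_j(x_0)$. The differences are only cosmetic: the paper derives that antisymmetry by differentiating the gauge condition $x^iA_i=0$ twice rather than quoting the Fock--Schwinger form of $A$, cites the normal-coordinate expansion of $|g|$ instead of deriving it, and lets $P$ act on the first variable, which is just a labelling convention.
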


\begin{proof}
    Once again, fix \(x_0\in M\), choose normal coordinates centered at \(x_0\), and
trivialize \(E\) by parallel transport along the radial geodesics
emanating from \(x_0\). From (\ref{urecursion}), the $j=0$ transport equation reads
$$2\nabla_V u_0 + \mathfrak h u_0 = 0, \quad u_0(x_0,x_0) = \operatorname{Id}_{E_{x_0}}.$$ 
The connection \(\nabla\) is the one appearing in the Laplace-type decomposition of \(P\), and it acts on the first variable of the Hadamard coefficients. The second transport equation yields
\[
\nabla_Vu_1+
\left(1+\frac{\mathfrak h}{2}\right)u_1
=
-Pu_0.
\]
In the radial parallel trivialization, the first equation gives
\[
u_0(x,x_0)
=
|g(x_0)|^{1/4}|g(x)|^{-1/4}
\operatorname{Id}_{E_{x_0}}.
\]
Since $ V(x_0)=0$ and $\mathfrak h(x_0)=0$, evaluation of the second transport equation at \(x=x_0\) yields
\[
u_1(x_0,x_0)
=
-\left.Pu_0(x,x_0)\right|_{x=x_0},
\]
where the operator $P$ acts on the first variable. 

Set now $ f(x):=|g(x_0)|^{1/4}|g(x)|^{-1/4}$. Writing the connection in the chosen frame as 
$\nabla_i=\partial_i+A_i$, 
the radial parallel condition implies $ x^iA_i(x)=0$.
Consequently, differentiating \(x^iA_i(x)=0\) once and twice at \(x_0\) gives
\[
A_i(x_0)=0,
\qquad
\partial_iA_j(x_0)+\partial_jA_i(x_0)=0,
\]
and hence, by symmetry of \(g^{ij}\),
\[
g^{ij}(x_0)\partial_iA_j(x_0)=0.
\]
Moreover, in normal coordinates and with our curvature convention,
\[
f(x)
=
1+\frac{1}{12}
\mathbf{Ric}_{ij}(x_0)x^ix^j
+\pazocal{O}(|x|^3),
\]
where $\mathbf{Ric}_{ij}$ is the Ricci tensor. This identity follows from \cite[Prop.~1.28]{344aea9f8a67a494d99261c648fbf0631} and holds in arbitrary signature. Now, since we have
\[
\Gamma_{ij}^{k}(x_0)=0,
\qquad
A_i(x_0)=0,
\qquad
\partial_i f(x_0)=0, \quad \textnormal{and} \ \ \ \partial_i\partial_j f(x_0)
=
\frac16 \mathbf{Ric}_{ij}(x_0),
\]
knowing that $R_g=g^{ij}\mathbf{Ric}_{ij}$ we obtain 
\begin{align*}
\left.
\Box^\nabla
\bigl(f(x)\operatorname{Id}_{E_{x_0}}\bigr)
\right|_{x=x_0}
&=
g^{ij}(x_0)
\left(
\partial_i\partial_j f(x_0)
\operatorname{Id}_{E_{x_0}}
+ \, 
\partial_iA_j(x_0)
\right) \\
&=
\frac{R_g(x_0)}{6}
\operatorname{Id}_{E_{x_0}}.
\end{align*}
Observe that it is the symmetry of \(g^{ij}\), combined with the antisymmetry of $\partial_iA_j(x_0)$, that kills the connection contribution. 

Since \(u_0(x_0,x_0)=\operatorname{Id}_{E_{x_0}}\) and
\(P=\Box^\nabla+B\), it follows that
\[
\left.Pu_0(x,x_0)\right|_{x=x_0}
=
\frac{R_g(x_0)}{6}\operatorname{Id}_{E_{x_0}} + \ B(x_0).
\]
Therefore,
\begin{equation} \label{u_1curv}
    u_1(x_0,x_0)
=
-\frac{R_g(x_0)}{6}\operatorname{Id}_{E_{x_0}}
- \ B(x_0),
\end{equation}
for any $x_0 \in M$. Proposition~\ref{propthmfinal} gives, for $k=1$,
\begin{equation*}
    \lim_{\epsilon \to 0^+}
    \operatorname*{res}_{\alpha=\frac n2-1}
    (P\pm i\varepsilon)^{-\alpha}(x,x)
    =
    \frac{(\mp i)^\kappa u_1(x,x)}
    {(4\pi)^{n/2}\Gamma\!\left(\frac n2-1\right)}.
\end{equation*}
Together with (\ref{u_1curv}), this yields the desired result.
\end{proof}
Note that Proposition \ref{propscalarcurvgeneral} therefore extends the contribution of the scalar curvature, from the case of the Laplace--Beltrami operator in \cite[Prop.~8.5]{Dang:2020jch}, to general Laplace-type operators.

\begin{remark} [Zeroth-order coefficient in radial parallel trivialization]
\textnormal{In a local trivialization of \(E\), write
\[
P
=
g^{ij}\partial_i\partial_j
+\mathfrak a^k\partial_k+ \mathfrak c,
\qquad
\nabla_i=\partial_i+A_i,
\]
where $\mathfrak a^k$ and $\mathfrak c$ are smooth sections of $\operatorname{End}(E)$.
Then the identity \(P=\Box^\nabla+B\) gives
\[
\mathfrak a^k
=
2g^{kj}A_j-g^{ij}\Gamma^k_{ij}\Id_E, \quad B
=
\mathfrak c-g^{ij}
\left(
\partial_iA_j+A_iA_j-\Gamma^k_{ij}A_k
\right). \]
In particular, in normal coordinates centered at \(x_0\) and in a radial
parallel trivialization, one has
\[
\Gamma^k_{ij}(x_0)=0,\qquad A_i(x_0)=0,
\qquad
g^{ij}(x_0)\partial_iA_j(x_0)=0,
\]
and hence $
B(x_0)= \mathfrak c(x_0)$. In other words the intrinsic zeroth-order term \(B(x_0)\) coincides with the zeroth-order coefficient \( \mathfrak c(x_0)\) of the local expression of \(P\).}

\end{remark}

\begin{remark}[The two-dimensional case]
\textnormal{When \(n=2\), the pole set reduces to
\[
    \pazocal{P}_2=\{1\}.
\]
In the case \(\kappa=1\), Proposition \ref{propthmfinal} gives
\begin{equation*}
    \lim_{\epsilon \to 0^+}
    \operatorname*{res}_{\alpha=1}
    (P-i\varepsilon)^{-\alpha}(x,x)
    =
    \frac{i}{4\pi} \mathrm{Id}_{E_x},
    \qquad
    \lim_{\epsilon \to 0^+}
    \operatorname*{res}_{\alpha=1}
    (P+i\varepsilon)^{-\alpha}(x,x)
    =
    -\frac{i}{4\pi} \mathrm{Id}_{E_x}.
\end{equation*}}\textnormal{The candidate pole at \(\alpha=0\), corresponding to \(k=1\), is removable.
Thus the coefficient giving rise to the scalar curvature does not occur as a residue in dimension
two.}

\textnormal{Indeed, the \(j=0\) and \(j=1\) terms in the upper-half-plane model term (\ref{diagmodel}) satisfy
\[
    \left.
    \frac{i^\kappa u_0(x,x)\Gamma(\alpha-1)}
    {4\pi\,\Gamma(\alpha)}
    (-i\varepsilon)^{1-\alpha}
    \right|_{\alpha=0}
    =
    \frac{i^{\kappa+1}\varepsilon}{4\pi}u_0(x,x)
    \to 0,
\]
whereas
\[
    \left.
    \frac{i^\kappa u_1(x,x)\Gamma(\alpha)}
    {4\pi\,\Gamma(\alpha)}
    (-i\varepsilon)^{-\alpha}
    \right|_{\alpha=0}
    =
    \frac{i^\kappa}{4\pi}u_1(x,x).
\]}\textnormal{Hence, in the mixed-signature case, the local contribution at
\(\alpha=0\) is 
   $ \frac{i}{4\pi}u_1(x,x)$.}

\textnormal{If we consider the unique decomposition $P=\Box^{\nabla}+B$, one has
\[
    u_1(x,x)=- \frac{R_g(x)}{6}\operatorname{Id}_{E_x} - \ B(x)
\]}\textnormal{by formula (\ref{u_1curv}) in the proof of Proposition \ref{propscalarcurvgeneral}, where $R_g(x)$ is the scalar curvature of $(M,g)$ at $x$. The holomorphic remainder in Theorem \ref{thmexpanresol} may still add a global contribution.}
\end{remark}

\smallskip

\section{applications of the residue formula} \label{sectapp}

In this final section, we apply the complex-power residue formula established in Proposition~\ref{propthmfinal} in two different settings. We first consider the wave operator on a Lorentzian manifold and treat the odd-dimensional case, complementing Dang--Wrochna's even-dimensional results in \cite{Dang:2020jch}. We then turn to the Dirac operator on scattering Clifford modules and express the residues of its complex powers, extending Dang--Vasy--Wrochna's formula in \cite[Thm.~5.7]{dang2024diracoperatorslocalinvariants} to arbitrary dimension and signature.

\subsection{The odd-dimensional Lorentzian case} \label{sectLorentz}

In this section, we specialize to $\kappa = 1$, $n$ odd and $E$ trivial. Let $(M,g$) be a non-trapping Lorentzian scattering space of odd dimension $n$, in the sense of Definition \ref{scpseudospace}. Denote by $\Box_g$ the \textit{wave operator} on $M$, which is in local coordinates $$\Box_g = \frac{1}{\sqrt{\vert g \vert}} \partial_{\mu}\left(\sqrt{\vert g \vert} g^{\mu \nu} \partial_{\nu} \right).$$ Namely it is the Laplace--Beltrami operator on $M$ in Lorentzian signature, where $\vert g \vert := \vert  \mathrm{det}(g) \vert $. We now consider $P := \Box_g + V$, where $V \in C^{\infty}(\, \Mb)$ is real-valued, and denote by $u_k$ its Hadamard coefficients in the convention of Section \ref{transeq}. Note that, since \(V\) is real-valued, \(P=\Box_g+V\) is formally self-adjoint and has the same principal symbol, hence the same non-trapping Hamilton flow, as \(\Box_g\). Therefore \(P\) satisfies Hypothesis \ref{theH}. For \(V=0\), the wave operator is moreover essentially self-adjoint by Vasy~\cite{Vasy2017EssentialSO} under the non-trapping assumption.

\begin{theorem} \label{thmresidlorentz}
    Assume $(M,g)$ is a globally hyperbolic, non-trapping Lorentzian scattering space in the sense of Definition \ref{Pnontrap}. Then, for all $\epsilon > 0$, the Schwartz kernel of $(P \pm i\epsilon)^{-\alpha}$ has for $\Re(\alpha) > \frac{n}{2}$ a well-defined on-diagonal restriction $\iota_\Delta^*(P \pm i\varepsilon)^{-\alpha} \in \pazocal{D}'(M)$, which extends as a meromorphic function of $\alpha \in \C$ with poles in $\pazocal{P}_n := \left\{\frac{n}{2}, \frac{n}{2} - 1, \ldots, \frac{1}{2}, -\frac{1}{2}, \ldots   \right\}$. Its residues in $\alpha \in \pazocal{P}_n$ are given by 
    $$ \lim_{\epsilon \to 0^+} \operatorname*{res}_{\alpha=\frac n2-k}(P \pm i\epsilon)^{-\alpha}(x,x) = \mp \frac{i\, u_k(x, x)}{(4\pi)^{\frac{n}{2}} \Gamma(\frac{n}{2} - k)},$$
for any $k \in \N_0$, where $u_k$ are the Hadamard coefficients of $P$. 
\end{theorem}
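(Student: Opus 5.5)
The plan is to obtain Theorem \ref{thmresidlorentz} as a direct specialization of the general machinery, with $\kappa=1$, $n$ odd, $E$ trivial and $P=\Box_g+V$.

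\emph{Step 1: Hypothesis \ref{theH} holds.} I will check this first, as already indicated before the statement. Since $V$ is real-valued, $P$ is formally self-adjoint, so $P-P^*=0\in\Psi^{1,-1-\delta}_{\mathrm{sc}}(M)$ and alternative (\ref{thefirst}) is satisfied. Its principal symbol $-g^{-1}(\xi,\xi)$ coincides with that of $\Box_g$. Hence the sources/sinks and the non-trapping property of the Hamilton flow are exactly those assumed for the wave operator in Definition \ref{Pnontrap}.

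\emph{Step 2: the on-diagonal restriction.} Next I will apply Theorem \ref{thmexpanresol}(2). This needs only Hypothesis \ref{theH}, through Lemma \ref{lemconv}, and not relative forward convexity. It yields, for $\Re(\alpha)>\frac n2$, the well-defined restriction $\iota_\Delta^*(P\pm i\varepsilon)^{-\alpha}\in\pazocal D'(M)$ together with the expansion (\ref{rappelzetadecomp}). Global hyperbolicity is not strictly needed here. Alternatively, one can use it via Remark \ref{remstronglycausal}: choose causally convex $O\Subset O_1$ so that the pair $(\U_0,\chi)$ is relatively forward convex. Then part (1) also gives a $C^s$ remainder near the diagonal, recovering the Lorentzian picture of \cite{Dang:2020jch}.

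\emph{Step 3: meromorphic continuation and pole set.} The continuation follows from Proposition \ref{propkfolwick} applied with $\beta=\alpha+j-1$, which gives the diagonal model terms (\ref{diagmodel}). For $n$ odd, every candidate $\alpha_k=\frac n2-k$ is a half-integer, so $1/\Gamma(\alpha_k)\neq 0$ and no cancellation occurs. The pole set is therefore the full $\pazocal P_n=\{\frac n2,\frac n2-1,\dots,\frac12,-\frac12,\dots\}$.

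\emph{Step 4: the residues.} Proposition \ref{propthmfinal} with $\kappa=1$ gives
\[
\lim_{\varepsilon\to0^+}\operatorname*{res}_{\alpha=\frac n2-k}(P\pm i\varepsilon)^{-\alpha}(x,x)=\frac{(\mp i)\,u_k(x,x)}{(4\pi)^{n/2}\Gamma(\frac n2-k)},
\]
which is exactly the claimed formula.

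The only point requiring care is Step 1. I need to confirm that adding $V$ affects neither the principal symbol nor the adjointness condition, so that all cited results apply. I also need to track the sign convention $(\mp i)^\kappa$ for the $\pm$ families, which follows from the two cases (\ref{Fimwpos}) and (\ref{Fimwneg}). Everything else is a direct invocation of the results above, so I do not expect a genuine obstacle.
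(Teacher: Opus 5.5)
Your proposal is correct, but it obtains the diagonal restriction by a different route from the paper's proof.

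\textbf{The paper's route.} The paper uses the global hyperbolicity assumption in an essential way. Global hyperbolicity gives strong causality, hence causally convex normal neighbourhoods (Remark \ref{remstronglycausal}). These make the cut-off pair $(\U_0,\chi)$ relatively forward convex in the sense of Definition \ref{defrelforconvdef}. The paper then invokes Theorem \ref{thmexpanresol}(1), which gives the full near-diagonal expansion with a remainder $R_N(i\varepsilon,\alpha)\in C^s(\U_0)$, and concludes with Proposition \ref{propthmfinal} at $\kappa=1$.

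\textbf{Your route.} Your main argument instead uses Theorem \ref{thmexpanresol}(2). There the remainder is only known to have operator wavefront set in $\pazocal{R}'_\chi$. This set avoids $D'_+$ by Lemma \ref{lemconv}, because non-trapping excludes periodic null bicharacteristics, so Hörmander's pullback theorem applies. You mention the paper's route only as an optional alternative.

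\textbf{Comparison.}
\begin{itemize}
\item Your route shows that global hyperbolicity, and even strong causality, is superfluous for the conclusions actually stated. With it, Theorem \ref{thmresidlorentz} is simply Theorem \ref{mainthemintro} specialized to $\kappa=1$ and $E$ trivial, once Hypothesis \ref{theH} is checked for $P=\Box_g+V$, which you do correctly.
\item The paper's route gives the stronger regularity statement: the remainder is smooth on a whole neighbourhood of the diagonal, not merely restrictable to it. This recovers the Dang--Wrochna local expansion and is what justifies keeping global hyperbolicity in the hypotheses.
\item Since the residues come only from the finite Hadamard sum evaluated on the diagonal, both routes give the same formula. Your sign bookkeeping $(\mp i)^{\kappa}=\mp i$ is right.
\end{itemize}
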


\begin{proof}
First, note that in \cite{Dang:2020jch}, parity only becomes relevant when the residues are computed in \cite[\S~8]{Dang:2020jch}. One could repeat these calculations in the odd-dimensional case and get the desired residues directly.

Alternatively, the formula follows directly from our pseudo-Riemannian result. Indeed, global hyperbolicity implies strong causality, as highlighted in our discussion at the beginning of Section \ref{returnSect}. Hence the causally convex normal neighborhoods provide relatively forward convex cut-off pairs. Let then $(\U_0, \chi)$ be a relatively forward convex pair in the sense of Definition \ref{defrelforconvdef}. Hence we are able to apply Theorem \ref{thmexpanresol} (1), which constitutes the smooth local expansion proven by Dang–Wrochna in \cite[Lem.~7.1]{Dang:2020jch} for the complex powers of $P$. More precisely, we obtain for any given $s,p$ that
\begin{equation*} 
            \Pa = \frac{1}{\Gamma(\alpha)} \sum_{k=0}^{N} u_k \mathbf{F}_{k + \alpha - 1}(i\epsilon) + R_N(i\epsilon,\alpha)
        \end{equation*}
as distributions on $\U_0$, where $R_N(i\epsilon,\alpha)$ is in $C^s(\U_0)$ and holomorphic for $\Re(\alpha) > -p$.

Finally, by Proposition \ref{propthmfinal}, we obtain 
\begin{equation*}
    \lim_{\epsilon \to 0^+}
    \operatorname*{res}_{\alpha=\frac n2-k}
    (P\pm i\varepsilon)^{-\alpha}(x,x)
    =
    \mp \frac{i u_k(x,x)}
    {(4\pi)^{n/2}\Gamma\!\left(\frac n2-k\right)},
\end{equation*}
for $\alpha$ running in $$\pazocal{P}_n = \left\{ \tfrac{n}{2},\, \tfrac{n}{2}-1,\, \tfrac{n}{2}-2,\, \ldots \right\},$$ which gives the odd-dimensional analogue of \cite[Thm.~8.3]{Dang:2020jch} for $V = 0$.
\end{proof}
Note that global hyperbolicity is not required to apply Theorem~\ref{thmexpanresol} under the relative forward convexity assumption; strong causality already suffices, as observed in Remark~\ref{remstronglycausal}.
In particular, again from formula (\ref{u_1curv}) in the proof of Proposition \ref{propscalarcurvgeneral}, we obtain the extraction of the scalar curvature from the Hadamard coefficient $u_1$. Namely, we have $$u_1(x,x) = - \frac{R_g(x)}{6} - V(x),$$ where $R_g(x)$ is the scalar curvature of $(M,g)$ at $x$. 

\begin{proposition} [Scalar curvature in odd dimension]
\label{proprescurvlorentz}

As a particular case of Theorem \ref{thmresidlorentz}, for $n \geq 3$, one obtains
   $$ \lim_{\epsilon \to 0^+}  \operatorname*{res}_{\alpha=\frac n2-1}(P \pm i\epsilon)^{-\alpha}(x,x) = \frac{\pm i}{(4\pi)^{\frac{n}{2}} \Gamma\left(\frac{n}{2} -1 \right)}\left(\frac{R_g(x)}{6} + V(x)\right).$$
\end{proposition}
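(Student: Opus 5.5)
The plan is to read the statement off Theorem \ref{thmresidlorentz} with $k=1$, and then to identify $u_1(x,x)$ explicitly through formula (\ref{u_1curv}). First note that $n\geq 3$ and $n$ odd give $\frac n2-1\in\pazocal{P}_n$, since it is a positive half-integer and hence not in $-\N_0$. We also have $\Gamma(\frac n2-1)\neq 0,\infty$. Theorem \ref{thmresidlorentz} then yields
\[
\lim_{\epsilon\to0^+}\operatorname*{res}_{\alpha=\frac n2-1}(P\pm i\epsilon)^{-\alpha}(x,x)=\mp\frac{i\,u_1(x,x)}{(4\pi)^{n/2}\Gamma\!\left(\frac n2-1\right)}.
\]
This is just Proposition \ref{propthmfinal} with $\kappa=1$, so that $(\mp i)^1=\mp i$.

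Next we compute $u_1(x,x)$. The trivial bundle carries the trivial connection $\nabla=d$, and $\Box_g=g^{ij}(\partial_i\partial_j-\Gamma^k_{ij}\partial_k)$ is exactly $\Box^\nabla$ in the decomposition (\ref{laptypedecomp}). Uniqueness of that decomposition therefore gives $B=V$. In the proof of Proposition \ref{propscalarcurvgeneral}, formula (\ref{u_1curv}) was derived in arbitrary signature from the transport equations (\ref{urecursion}) and the normal-coordinate expansion of $|g|^{-1/4}$. Here it specialises to $u_1(x,x)=-\frac{R_g(x)}{6}-V(x)$, and the connection term vanishes identically because $A_i=0$.

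Substituting, we get $\mp i\cdot\bigl(-\frac{R_g}{6}-V\bigr)=\pm i\bigl(\frac{R_g}{6}+V\bigr)$, which is the claimed formula.

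The only delicate point is bookkeeping of sign conventions: the sign of $\Box_g$ relative to the Laplace-type principal symbol $-g^{-1}(\xi,\xi)$, and the curvature convention behind $\partial_i\partial_j f(x_0)=\frac16\mathbf{Ric}_{ij}$. Both were already fixed in the derivation of (\ref{u_1curv}), so I would simply check that $\Box_g$ here coincides with the paper's $\Box^\nabla$ for $\nabla=d$. Once that holds, the result is a direct specialisation of Proposition \ref{propscalarcurvgeneral} with $\kappa=1$, $E$ trivial and $B=V$.
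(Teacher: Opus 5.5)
Your proposal is correct and follows the paper's route: specialise Theorem \ref{thmresidlorentz} (that is, Proposition \ref{propthmfinal} with $\kappa=1$) to $k=1$, and insert $u_1(x,x)=-\frac{R_g(x)}{6}-V(x)$ from (\ref{u_1curv}). Your identification $B=V$ is justified, since $\Box_g=\Box^{\nabla}$ for the trivial connection, and the resulting sign $\mp i\bigl(-\frac{R_g}{6}-V\bigr)=\pm i\bigl(\frac{R_g}{6}+V\bigr)$ is exactly the paper's.
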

For $V = 0$, this is the direct analogue of \cite[Prop.~8.5]{Dang:2020jch}.

\subsection{Residue formula for the pseudo-Riemannian Dirac operator} \label{sectresdirac}

In \cite{dang2024diracoperatorslocalinvariants}, the authors establish the corresponding residue formula for the square of the Dirac operator acting on scattering spinors, or more generally on sections of a Clifford module, over an even-dimensional Lorentzian manifold; see \cite[Thm.~5.7]{dang2024diracoperatorslocalinvariants}.

We will then introduce the notions considered in their result, adapt them to the present setting, and then state the corresponding residue formula for the Dirac operator in arbitrary pseudo-Riemannian signature and dimension. Note that we will also discuss the choice of the inner product with respect to which the formal adjoint of $P$ is defined, allowing us to apply our previous results. In the Lorentzian case, such a positive-definite structure is obtained from the choice of a timelike vector field, available in particular in the globally hyperbolic setting, whereas in higher signature cases the precise relevant notion is that of \emph{spacelike reflection}.

Recent developments in twisted noncommutative geometry actually provide an
algebraic mechanism for the emergence of pseudo-Riemannian signature. In particular, Nieuviarts \cite{nieuviarts2026emergencetimetwistedspectral} relates twisted and pseudo-Riemannian
spectral triples through a \(K\)-morphism associated with a spacelike
reflection, while Martinetti \cite{martinetti2026twistedstandardmodelkrein} shows that the inner product induced by a broad
class of twists naturally defines a \emph{Krein} structure.
These approaches are complementary to the present work. They concern
the algebraic origin of the indefinite spinorial structure, whereas
we start from a pseudo-Riemannian Clifford module and
compute the local residues of the Feynman complex powers of $P=-\slashed D^{\,2}$. Thus, formula~(\ref{resDiracfinal}) in Theorem \ref{thmdiracresidue} provides, in this framework, local geometric coefficients which are relevant to a pseudo-Riemannian spectral action.

\subsubsection{The scattering Dirac operator} \label{sectDiracsc}

Let $(M,g)$ be a pseudo-Riemannian scattering space of dimension $n$, with signature $(\kappa,n-\kappa)$. Consider the Levi-Civita connection $\nabla^{\mathrm{L}}$ on $(\, \Mb,g)$. It is a \emph{scattering connection}, in particular
$$\nabla^{\mathrm{L}} : C^\infty\!\left(\, \overbar M;{}^{\mathrm{sc}}T \, \overbar M\right)
\to
C^\infty\!\left(\, \Mb ; \overbar
{}^{\mathrm{sc}}T^*\, \overbar M
\otimes{}^{\mathrm{sc}}T \, \overbar M
\right),$$
and it is a lift of a \emph{b-connection}. See \cite[\S~1.2]{Kottke2012ACI} for the complete definition of a scattering connection, and see the classical references \cite{Lawson:1998yr,344aea9f8a67a494d99261c648fbf0631} for Dirac operators on Clifford modules. 

The Levi-Civita connection $\nabla^{\mathrm{L}}$ induces a connection on $C({}^{\mathrm{sc}}T \, \Mb)$, the \emph{scattering Clifford bundle}, which is defined in the same way as $C(TM)$, the Clifford bundle of $(M,g)$, but with respect to the scattering tangent bundle ${}^{\mathrm{sc}}T \, \Mb$ instead of the usual tangent bundle $TM$. This connection will still be denoted by $\nabla^{\mathrm{L}}$. Now, consider a \emph{Clifford module} $(E, \nabla^E)$ that is \emph{scattering} in the sense that it is endowed with a linear map
$$\gamma : C^{\infty}\left(\, \Mb;C({}^{\mathrm{sc}}T \, \Mb)\right) \rightarrow C^{\infty}\left(\, \Mb; \mathrm{End}(E)\right),$$
called the \emph{Clifford action}, satisfying the \emph{Clifford relations}
$$\gamma(X)\gamma(Y) + \gamma(Y)\gamma(X) = -2 g(X,Y) \, \mathrm{Id}_{E},$$
for $X,Y \in \pazocal{V}_{\text{sc}}(\, \overbar{M})$. It is equipped with a \emph{Clifford connection} $\nabla_E$, which by definition satisfies, for all $X \in \pazocal{V}_{\text{sc}}(\, \overbar{M})$, $Y \in C^{\infty}\left(\,\Mb;C({}^{\mathrm{sc}}T \, \Mb)\right)$ and $u \in C^{\infty}(\,\Mb;E)$,
$$\nabla^E_X (\gamma(Y)u) = \gamma(\nabla^{\mathrm{L}}_XY )u + \gamma(Y )\nabla^E_X u. $$

The \emph{scattering Dirac operator} $\slashed{D} \in \mathrm{Diff}^{1,0}_{\mathrm{sc}}(M;E)$ is then given by the composition
\begin{equation} \label{defdirac}
    C^{\infty}(\,\Mb;E) \overset{\nabla^E}{\longrightarrow} C^{\infty}(\,\Mb;E \otimes {}^{\mathrm{sc}}T^* \, \Mb ) \underset{g}{\overset{\sim}{\longrightarrow}} C^{\infty}(\,\Mb;E \otimes {}^{\mathrm{sc}}T\, \Mb ) \overset{\gamma}{\longrightarrow} C^{\infty}(\,\Mb;E).
\end{equation}
In a local orthonormal frame $(e_1, \ldots, e_n)$ of ${}^{\mathrm{sc}}T \, \Mb$, it reads 
\begin{equation*} 
   \slashed{D} = g^{ab} \gamma(e_a) \nabla^E_{e_b}, 
\end{equation*}
where $g^{ab}$ denotes the inverse metric. We put $$P := - \slashed{D}^2 \in \mathrm{Diff}^{2,0}_{\mathrm{sc}}(M;E),$$
which is a differential operator of Laplace-type.

We assume the existence of a smooth positive-definite Hermitian bundle metric $h$, namely $h \in C^{\infty}\left(\,\Mb; \mathrm{Hom}(E,E^*)\right)$ such that $h_x$ is positive-definite for each $x \in \, \Mb$. Therefore, for any compactly supported sections $u,v$ of $E$, we define the positive scalar product $\langle \cdot,\cdot \rangle_E$ by integrating $h(u,v)$ against the volume density $d\mathrm{vol}_g$, i.e.
\begin{equation*} 
    \langle u,v \rangle_E = \int_M h_x(u_x,v_x) \, d\mathrm{vol}_g.
\end{equation*}
The formal adjoint $P^*$ of $P$ is thus defined with respect to the inner product $\langle \cdot,\cdot \rangle_E$. 

However, we still assume the existence of a section $\beta \in C^\infty\left(\, \Mb;\operatorname{Hom}(E,E^*)\right)$, possibly indefinite, such that $\beta_x$ is Hermitian non-degenerate for each $x \in \, \Mb$, together with
\begin{enumerate} \label{betaform}
    \item $\beta\bigl(\gamma(X)u,v\bigr)
=
-\beta\bigl(u,\gamma(X)v\bigr),$ which is compatibility with Clifford multiplication,
 \item $X\bigl(\beta(u,v)\bigr)
=
\beta\bigl(\nabla_X^E u,v\bigr)
+
\beta\bigl(u,\nabla_X^E v\bigr),$ which is equivalent to $\nabla^E \beta = 0$.
\end{enumerate}
Again, we define the Hermitian form $\langle \cdot,\cdot \rangle_{\beta}$, by integrating $u,v \mapsto \beta (u,v)$ with the volume density $d \mathrm{vol}_g$ of $(M,g)$. 

\begin{remark} [Hermitian metric in the Lorentzian case]\label{remlorentzscprod}
\textnormal{In \cite[\S~5.1]{dang2024diracoperatorslocalinvariants}, the authors consider the Dirac operator acting on sections of the spin bundle $S$ and assume there exists a section $\beta \in C^{\infty}\left(\,\Mb; \mathrm{Hom}(S,S^*)\right)$ such that $\beta_x$ is Hermitian non-degenerate for each $x \in \, \Mb$. That pairing is generally indefinite in Lorentzian signature. Hence they assume the existence of a  scattering vector field $e \in \pazocal{V}_\mathrm{sc}(\, \Mb)$ that is timelike and future-directed in $M$ to produce a positive-definite metric from $\beta$; see condition b) in \cite[\S~5.1]{dang2024diracoperatorslocalinvariants}. That being said, assumptions 1. and 2. concerning $\beta$ are precisely assumptions~a) and~c) of \cite[\S~5.1]{dang2024diracoperatorslocalinvariants} for their $\beta$ on the spin bundle $S$.
Afterwards, they consider Clifford modules $E$ of the form $E = S \otimes H$, where $H$ is a Hermitian vector bundle. The definition of the initial form $\beta$ is extended to the twisted module and the positivity property is preserved. In the present setting, its role is played by $h$ on general Clifford modules. Their construction is thus based on notions particular to  Lorentzian geometry.}
\end{remark}

In summary, we assume the existence of two independent forms $\beta$ and $h$ as described above, and we consider the quadruple $(E,\nabla^E,\beta,h)$ as the given data for our Clifford module. Note that, strictly speaking, the data should contain the sc-metric $g$ and the Clifford action $\gamma$ as well.
\begin{definition} \label{Dnontrap}
    We say that $\slashed{D} \in \mathrm{Diff}^{1,0}_{\mathrm{sc}}(M;E)$ is \emph{non-trapping} if $\slashed{D}$ is the Dirac operator given by (\ref{defdirac}), and if $P = - \slashed{D}^2$ satisfies Hypothesis \ref{theH}.
\end{definition}

\begin{remark} [Terminology]
    \textnormal{Note that the \emph{non-trapping} Dirac operator of Definition \ref{Dnontrap} satisfies the same properties as its Lorentzian analogue, the Dirac operator on a \emph{small perturbation of (generalized) Minkowski space} in \cite[Def. 5.3]{dang2024diracoperatorslocalinvariants}; we simply chose the shorter appellation \emph{non-trapping} in our case.}
\end{remark}

\subsubsection{A construction of a positive-definite metric from the indefinite form $\beta$} In the Lorentzian case, as mentioned in Remark \ref{remlorentzscprod}, the authors obtain a positive-definite form out of the original indefinite form $\beta$ on the spin bundle $S$, using a scattering vector field $e \in \pazocal{V}_\mathrm{sc}(\, \Mb)$. As highlighted in \cite[Rem. 5.1]{dang2024diracoperatorslocalinvariants}, if the Lorentzian manifold is assumed to be globally hyperbolic, a chosen Cauchy foliation supplies such a unit timelike normal which converts \(\beta\) into a positive-definite Hermitian metric. A given Cauchy foliation gives a canonical choice for such a timelike $e$; the authors must still impose condition b) in \cite[\S~5.1]{dang2024diracoperatorslocalinvariants}, namely $i \beta \gamma(e) > 0$.

In the following, we give an analogous construction in the pseudo-Riemannian setting, namely we obtain a positive-definite Hermitian metric from the indefinite form $\beta$ defined on a Clifford module $E$ satisfying assumptions 1. and 2. above.

Assume that we are given an orthogonal direct sum decomposition of the scattering tangent bundle
\begin{equation} \label{tangentsplit}
    {}^{\mathrm{sc}}T \, \overbar M
=
T^+ \oplus T^-,
\qquad
\operatorname{rk}(T^+)=\kappa,
\end{equation}
 where $g$ is negative-definite on $T^-$ and positive-definite on $T^+$. Note that such a splitting always exists on a pseudo-Riemannian manifold; here, we simply fix one.
 
Assume now that \(T^+\) is oriented. The associated \emph{spacelike
reflection} is the bundle endomorphism $ r:{}^{\mathrm{sc}}T \, \overbar M
\to {}^{\mathrm{sc}}T \, \overbar M$
defined by
\[
r|_{T^-}=-\operatorname{Id},
\qquad
r|_{T^+}=\operatorname{Id}.
\]
It satisfies
\[
r^2=\operatorname{Id},
\qquad
g(rX,rY)=g(X,Y),
\]
and the metric $$g_r(X,Y):=g(X,rY)$$ is positive-definite. We refer to \cite[\S~3]{van_den_Dungen_2013} for more details on such notions and we recall that these are related to the construction of spectral triples in noncommutative geometry \cite{Strohmaier_2006,martinetti2026twistedstandardmodelkrein, nieuviarts2026emergencetimetwistedspectral}.

Let \((e_1,\ldots,e_{\kappa})\) be a local oriented orthonormal frame
of \(T^+\). We define
\begin{equation*} 
    J_+
=
i^{\frac{\kappa(\kappa +1)}{2}}\,\gamma(e_1)\cdots\gamma(e_\kappa),
\end{equation*}
for which one can show that
\[
J_+^2=\operatorname{Id},
\qquad
\beta(J_+u,v)=\beta(u,J_+v).
\]
 If, in addition, we assume
\[
\beta(u,J_+u)>0
\qquad
\text{for every }u\neq0,
\]
then $J_+$ is a \emph{fundamental symmetry}, and
\begin{equation*} 
    h(u,v):=\beta(u,J_+v)
\end{equation*}
is a smooth positive-definite Hermitian metric on \(E\). Note that the definition of $J_+$ depends on the choice of the decomposition (\ref{tangentsplit}).

\begin{remark} [Fundamental symmetry in Lorentzian signature]
    \textnormal{The Lorentzian case corresponds in our convention to \(\kappa=1\), and thus \(T^+\) is locally generated by a unit timelike vector field \(e\). Hence the fundamental symmetry is given by $ J_+ = i \gamma(e)$. The condition $\beta(u,J_+u)>0$ imposed above is precisely the analogue of assumption b) in \cite[\S~5.1]{dang2024diracoperatorslocalinvariants}, namely $i \beta \gamma(e) > 0$.}
    
\textnormal{Thus, strictly speaking, a Cauchy foliation is not required for the construction itself:
in the globally hyperbolic Lorentzian case, it merely provides a natural choice of the timelike line bundle \(T^+\) through its unit normal, as  highlighted in \cite[Rem. 5.1]{dang2024diracoperatorslocalinvariants}. Therefore, the construction based on such an orthogonal decomposition of the tangent bundle (or equivalently on a spacelike reflection) directly extends the Lorentzian setting.}

\end{remark}
\begin{remark} [Self-adjointness of the Dirac operator in pseudo-Riemannian signature]\textnormal{ If the positivity condition $
\beta(u,J_+u)>0$ is not imposed, the formula $
h(u,v)=\beta(u,J_+v)$
still defines a Hermitian form, which may be indefinite. One may then study the Dirac operator with respect to the corresponding indefinite inner product, in the framework of \emph{Krein spaces}. Under suitable geometric assumptions, the pseudo-Riemannian Dirac operator has been shown to be essentially \emph{Krein self-adjoint}; see for instance \cite[Prop.~3.4]{van_den_Dungen_2013} for the Dirac operator acting on spinors.}
\end{remark}

\subsubsection{The residues of the complex powers of the square of the Dirac operator} 

Consider a Clifford module $(E,\nabla^E)$ over $\, \Mb$. We denote by
$$
F^E \in \Omega^2 \bigl(\, \Mb;\operatorname{End}(E)\bigr)
$$
the \emph{twisting curvature} of the Clifford module $E$, which commutes with the Clifford action. In signature \((\kappa,n-\kappa)\), put
\[
g(e_i,e_j)=\varepsilon_i\delta_{ij},
\qquad
\varepsilon_i\in\{\pm1\},
\]
where $(e_1, \ldots, e_n)$ is a local orthonormal frame  of ${}^{\mathrm{sc}}T \, \Mb$. The  Clifford contraction of $F^E$, denoted by $\mathbf{F}^E$, then reads 
$$ \mathbf F^E =
\sum_{i<j} \varepsilon_i \varepsilon_j 
\gamma(e_i)\gamma(e_j)F^E(e_i,e_j)
\in C^\infty \left(\, \overbar{M};\operatorname{End}(E)\right).$$
 By the Bochner--Lichnerowicz formula,
\begin{equation} \label{bochlichformula}
    \slashed D^{\,2}
=
(\nabla^E)^{*}\nabla^E
+
\mathbf F^E
+
\frac14R_g\operatorname{Id}_E,
\end{equation}
which holds in arbitrary signature; see \cite[Thm.~3.52]{344aea9f8a67a494d99261c648fbf0631} and \cite[Thm.~8.8]{Lawson:1998yr}. 

We emphasize that the formal adjoint $(\nabla^{E})^*$ of the Clifford connection is taken with respect to the Hermitian form $\langle\cdot,\cdot\rangle_\beta$, as explicitly noted in \cite[\S~5.2]{dang2024diracoperatorslocalinvariants}. For the Clifford contraction of the twisting curvature and the Bochner--Lichnerowicz formula in general pseudo-Riemannian signature, see \cite[\S\S~57--58]{Walpuski2022}.

Recall that, again by an abuse of notation, the fiber trace of the residue  \[
    \left[
    \operatorname{tr}_E
    \left(
        \underset{\alpha=\frac n2-1}{\operatorname{res}}
        \zeta^\kappa_{g,\varepsilon}(\alpha)
    \right)
    \right](x)
\]
will be denoted by $$
    \underset{\alpha=\frac n2-1}{\operatorname{res}}
    \operatorname{tr}_E\left((P-i\varepsilon)^{-\alpha}\right)(x,x)$$
in the following theorem.

\bigskip

\begin{theorem} \label{thmdiracresidue}
    Let $(M,g)$ be a pseudo-Riemannian scattering space of dimension $n$ and signature $(\kappa,n-\kappa)$. Assume the Dirac operator $\slashed{D} \in \mathrm{Diff}^{1,0}_{\mathrm{sc}}(M;E)$ is non-trapping as in Definition \ref{Dnontrap}, and put $ P=-\slashed{D}^{\,2}$. Then, for all $\varepsilon>0$, the Schwartz kernel of
$(P \pm i\varepsilon)^{-\alpha}$ has, for $
    \operatorname{Re}(\alpha)>\frac{n}{2}$,
a well-defined on-diagonal restriction $$
    \iota_\Delta^*(P \pm i\varepsilon)^{-\alpha} \in \pazocal{D}'\left(M;\mathrm{End}(E)\right),$$
which extends as a meromorphic function with
poles at $\pazocal{P}_n := \left\{ \frac{n}{2} - k \ \vert \ k \in \N_0, \ \frac{n}{2} - k \notin - \N_0 \right\}.$

Furthermore, for $n \geq 3$,
\begin{equation} \label{resDiracfinal}
    \lim_{\varepsilon\to0^+}
\underset{\alpha=\frac{n}{2}-1}{\operatorname{res}}
    (P \pm i\varepsilon)^{-\alpha}(x,x)
= \frac{(\mp i)^{\kappa}R_g(x)}
     {12(4\pi)^{\frac n2}
      \Gamma\!\left(\frac n2-1\right)} \mathrm{Id}_{{E_x}} + \frac{(\mp i)^{\kappa}\mathbf{F}^E(x)}
     {(4\pi)^{\frac n2}
      \Gamma\!\left(\frac n2-1\right)},
\end{equation}
where $\mathbf{F}^E$ denotes the Clifford contraction of the twisting curvature $F^E$ of $E$. In particular,
\begin{equation*} \label{resDirac}
    \lim_{\varepsilon\to0^+}
\underset{\alpha=\frac{n}{2}-1}{\operatorname{res}}
\operatorname{tr}_E
\left(
    (P \pm i\varepsilon)^{-\alpha}
\right)(x,x)
= \frac{(\mp i)^{\kappa}\operatorname{rk}(E)R_g(x)}
     {12(4\pi)^{\frac n2}
      \Gamma\!\left(\frac n2-1\right)},
\end{equation*}
where $\operatorname{rk}(E)$ is the rank of $E\to \, \Mb$ and $\tr_E$ is the fiber trace on $\mathrm{End}(E)$.
\end{theorem}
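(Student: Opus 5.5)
The plan is to reduce everything to the general machinery already in place and then compute one Hadamard coefficient. Since $\slashed D$ is non-trapping in the sense of Definition \ref{Dnontrap}, $P=-\slashed D^{\,2}\in\mathrm{Diff}^{2,0}_{\mathrm{sc}}(M;E)$ satisfies Hypothesis \ref{theH}. We first check that $P$ is of Laplace-type. By the Clifford relations, its principal symbol is $-\sigma(\slashed D)^2 = -(i\gamma(\xi^\sharp))^2 = -g^{-1}(\xi,\xi)\,\mathrm{Id}_E$, which is real and scalar. Theorem \ref{thmexpanresol} (2) then gives the diagonal restriction $\iota_\Delta^*(P\pm i\varepsilon)^{-\alpha}$ for $\Re(\alpha)>\frac n2$, and the argument of Section \ref{sectresidues} gives meromorphic continuation with poles in $\pazocal P_n$. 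Proposition \ref{propthmfinal} with $k=1$ reduces the residue statement to computing $u_1(x,x)$. By Proposition \ref{propscalarcurvgeneral}, equivalently formula (\ref{u_1curv}), this equals $-\frac{R_g}{6}\mathrm{Id}_E - B$, where $P=\Box^\nabla+B$ is the unique Laplace-type decomposition (\ref{laptypedecomp}).

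The core step is to identify $B$ and $\nabla$ via the Bochner--Lichnerowicz formula (\ref{bochlichformula}). We need to show that the connection Laplacian $(\nabla^E)^*\nabla^E$, with the adjoint taken with respect to $\langle\cdot,\cdot\rangle_\beta$, equals $-\Box^{\nabla^E}$ in the paper's sign convention. We compute this in a local orthonormal frame. Using $\nabla^E\beta=0$ and integration by parts against $d\mathrm{vol}_g$, we get $(\nabla^E)^*\nabla^E = -g^{ij}(\nabla_i\nabla_j-\Gamma^k_{ij}\nabla_k)$, with the same metric signs. The point to check carefully is that $\beta$ being indefinite does not affect this, because only metric-compatibility of $\nabla^E$ with $\beta$ enters. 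Then
\[
P = -\slashed D^{\,2} = \Box^{\nabla^E} - \mathbf F^E - \tfrac14 R_g\,\mathrm{Id}_E ,
\]
so that $\nabla=\nabla^E$ and $B=-\mathbf F^E-\frac14R_g\mathrm{Id}_E$. Uniqueness of the decomposition ensures the Hadamard coefficients are built with this connection. Hence
\[
u_1(x,x) = -\tfrac{R_g}{6}\mathrm{Id}_E + \tfrac{R_g}{4}\mathrm{Id}_E + \mathbf F^E = \tfrac{R_g}{12}\mathrm{Id}_E + \mathbf F^E ,
\]
and inserting this into Proposition \ref{propthmfinal} yields (\ref{resDiracfinal}) with the prefactor $(\mp i)^\kappa/((4\pi)^{n/2}\Gamma(\frac n2-1))$.

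For the traced statement it suffices to show $\operatorname{tr}_E\mathbf F^E=0$. We plan to use that $F^E(e_i,e_j)$ commutes with the Clifford action. For $i\neq j$, $\gamma(e_i)\gamma(e_j)=\gamma(e_i)^{-1}\varepsilon_i^{-1}\cdots$; more precisely $\gamma(e_i)^2=-\varepsilon_i$ is invertible. Combining commutation with anticommutation and cyclicity of the trace:
\[
\operatorname{tr}\bigl(\gamma_i\gamma_jF\bigr)=\operatorname{tr}\bigl(\gamma_jF\gamma_i\bigr)=\operatorname{tr}\bigl(\gamma_j\gamma_iF\bigr)=-\operatorname{tr}\bigl(\gamma_i\gamma_jF\bigr),
\]
so each summand is traceless. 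Taking $\operatorname{tr}_E\mathrm{Id}_E=\operatorname{rk}(E)$ then gives the second formula.

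The main obstacle is the sign and normalization bookkeeping in the middle step. This means reconciling the sign conventions: $\gamma(X)^2=-g(X,X)$, the sign of $\Box^\nabla$ versus $(\nabla^E)^*\nabla^E$ in mixed signature, and the curvature convention under which (\ref{bochlichformula}) carries $+\frac14R_g$ while (\ref{u_1curv}) carries $-\frac16R_g$. I would check these first on flat $\R^{\kappa,n-\kappa}$ with a trivial twist, where $P=Q_\kappa(D)$-type and $u_1=0$. I would then check a round-sphere example in Riemannian signature, where $\frac{R}{12}$ should match the known Dirac heat coefficient $-\frac{R}{12}$ up to the $-\slashed D^2$ sign.
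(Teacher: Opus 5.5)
Your proposal is correct and follows the paper's proof step by step. You get Hypothesis \ref{theH} from Definition \ref{Dnontrap}, then use Theorem \ref{thmexpanresol} and Proposition \ref{propthmfinal} for the diagonal restriction and residues. You identify $B=-\mathbf F^E-\frac{R_g}{4}\mathrm{Id}_E$ through the Bochner--Lichnerowicz formula, with $\Box^{\nabla^E}=-(\nabla^E)^*\nabla^E$ justified by $\nabla^E\beta=0$, and conclude $u_1(x,x)=\frac{R_g}{12}\mathrm{Id}_E+\mathbf F^E$ from Proposition \ref{propscalarcurvgeneral}. Your explicit principal-symbol check and the cyclicity argument $\operatorname{tr}(\gamma_i\gamma_jF)=-\operatorname{tr}(\gamma_i\gamma_jF)$ spell out points that the paper only asserts.
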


\begin{proof}
We apply Theorem~\ref{thmexpanresol}, since $P=-\slashed{D}^2$ satisfies Hypothesis \ref{theH}. Therefore, the on-diagonal restriction
 $$
    \iota_\Delta^*(P-i\varepsilon)^{-\alpha} \in \pazocal{D}'\left(M;\mathrm{End}(E)\right)$$
    is well-defined. Proposition~\ref{propthmfinal} then yields
\begin{equation} \label{respropfin-Dirac}
    \lim_{\epsilon \to 0^+}
    \operatorname*{res}_{\alpha=\frac n2-k}
    (P-i\varepsilon)^{-\alpha}(x,x)
    =
    \frac{i^\kappa u_k(x,x)}
    {(4\pi)^{n/2}\Gamma\!\left(\frac n2-k\right)} \in \mathrm{End}(E_x),
\end{equation}
for $k \in \N_0$ such that $ \frac n2-k \in  \pazocal{P}_n = \left\{ \frac{n}{2} - k \ \vert \ k \in \N_0, \ \frac{n}{2} - k \notin - \N_0 \right\}$.
Finally, the first Hadamard coefficient of $P$ can be expressed in terms of the scalar curvature $R_g$ and the Clifford contraction $\mathbf{F}^E$. Recall that the operator $P = - \slashed{D}^2$ is of Laplace-type, and set
\[
\Box^{\nabla^E}
:=
-(\nabla^E)^{*}\nabla^E.
\]
This coincides with the Laplace-type decomposition (\ref{laptypedecomp}) by integration by parts with respect to the pairing induced by \(\beta\), since we imposed $\nabla^E \beta = 0$. Observe now that from the Bochner--Lichnerowicz formula (\ref{bochlichformula}), $P = - \slashed{D}^2$ can be written as
\[
P = \Box^{\nabla^E}+B,
\qquad
B
=
-\mathbf F^E-\frac{R_g}{4}\operatorname{Id}_E.
\]
It follows from Proposition~\ref{propscalarcurvgeneral}
that
\[
u_1(x,x)
=
-\frac{R_g(x)}{6}\operatorname{Id}_{E_x} - \ B(x)
=
\frac{R_g(x)}{12}\operatorname{Id}_{E_x}
+ \ \mathbf F^E(x),
\]
or from \cite[Proof of Thm.~5.7]{dang2024diracoperatorslocalinvariants}, whose argument is independent of the signature. Together with (\ref{respropfin-Dirac}), one obtains
\begin{equation*}
    \lim_{\varepsilon\to0^+}
\underset{\alpha=\frac{n}{2}-1}{\operatorname{res}}
    (P-i\varepsilon)^{-\alpha}(x,x)
= \frac{i^{\kappa}R_g(x)}
     {12(4\pi)^{\frac n2}
      \Gamma\!\left(\frac n2-1\right)} \mathrm{Id}_{{E_x}} + \frac{i^{\kappa}\mathbf{F}^E(x)}
     {(4\pi)^{\frac n2}
      \Gamma\!\left(\frac n2-1\right)}.
\end{equation*}
Now, since the twisting curvature commutes with the Clifford action, each term $$\gamma(e_i)\gamma(e_j)F^E(e_i,e_j), \quad i\neq j$$ has vanishing trace. Hence $\mathrm{tr}_E(\mathbf{F}^E) = 0$. Therefore, applying the trace yields
\begin{equation*} 
    \lim_{\varepsilon\to0^+}
\underset{\alpha=\frac{n}{2}-1}{\operatorname{res}}
\operatorname{tr}_E
\left(
    (P-i\varepsilon)^{-\alpha}
\right)(x,x)
= \frac{i^{\kappa}\operatorname{rk}(E)R_g(x)}
     {12(4\pi)^{\frac n2}
      \Gamma\!\left(\frac n2-1\right)}.
\end{equation*}
The calculation in the lower-half-plane proceeds analogously. 
\end{proof}

\emph{Acknowledgments.} The author would like to thank his PhD thesis supervisor, Michał Wrochna, for valuable discussions.

\bibliographystyle{plainurl}
\bibliography{mybibi}

{\small
\noindent
{\scshape Mathematical Institute, Utrecht University, Utrecht, The Netherlands}\\
\emph{Email address:} \texttt{a.sadatkyaee@uu.nl}
}

\end{document}